\newcommand{\tun}{\begin{picture}(5,0)(-2,-1)
\put(0,0){\circle*{2}}
\end{picture}}
\newcommand{\tdeux}{\begin{picture}(7,7)(0,-1)
\put(3,0){\circle*{2}}
\put(3,0){\line(0,1){5}}
\put(3,5){\circle*{2}}
\end{picture}}
\newcommand{\tdun}[1]{\begin{picture}(10,5)(-2,-1)
\put(0,0){\circle*{2}}
\put(3,-2){\tiny #1}
\end{picture}}
\newcommand{\tddeux}[2]{\begin{picture}(12,5)(0,-1)
\put(3,0){\circle*{2}}
\put(3,0){\line(0,1){5}}
\put(3,5){\circle*{2}}
\put(6,-2){\tiny #1}
\put(6,3){\tiny #2}
\end{picture}}
\newcommand{\tdtroisun}[3]{\begin{picture}(20,12)(-5,-1)
\put(3,0){\circle*{2}}
\put(-0.65,0){$\vee$}
\put(6,7){\circle*{2}}
\put(0,7){\circle*{2}}
\put(5,-2){\tiny #1}
\put(9,5){\tiny #2}
\put(-5,5){\tiny #3}
\end{picture}}
\newcommand{\tdtroisdeux}[3]{\begin{picture}(12,12)(-2,-1)
\put(0,0){\circle*{2}}
\put(0,0){\line(0,1){5}}
\put(0,5){\circle*{2}}
\put(0,5){\line(0,1){5}}
\put(0,10){\circle*{2}}
\put(3,-2){\tiny #1}
\put(3,3){\tiny #2}
\put(3,9){\tiny #3}
\end{picture}}
\newcommand{\tdquatreun}[4]{\begin{picture}(20,12)(-5,-1)
\put(3,0){\circle*{2}}
\put(-0.6,0){$\vee$}
\put(6,7){\circle*{2}}
\put(0,7){\circle*{2}}
\put(3,7){\circle*{2}}
\put(3,0){\line(0,1){7}}
\put(5,-2){\tiny #1}
\put(8.5,5){\tiny #2}
\put(1,10){\tiny #3}
\put(-5,5){\tiny #4}
\end{picture}}
\newcommand{\tdquatredeux}[4]{\begin{picture}(20,20)(-5,-1)
\put(3,0){\circle*{2}}
\put(-.65,0){$\vee$}
\put(6,7){\circle*{2}}
\put(0,7){\circle*{2}}
\put(0,14){\circle*{2}}
\put(0,7){\line(0,1){7}}
\put(5,-2){\tiny #1}
\put(9,5){\tiny #2}
\put(-5,5){\tiny #3}
\put(-5,12){\tiny #4}
\end{picture}}
\newcommand{\tdquatretrois}[4]{\begin{picture}(20,20)(-5,-1)
\put(3,0){\circle*{2}}
\put(-.65,0){$\vee$}
\put(6,7){\circle*{2}}
\put(0,7){\circle*{2}}
\put(6,14){\circle*{2}}
\put(6,7){\line(0,1){7}}
\put(5,-2){\tiny #1}
\put(9,5){\tiny #2}
\put(-5,5){\tiny #4}
\put(9,12){\tiny #3}
\end{picture}}
\newcommand{\tdquatrequatre}[4]{\begin{picture}(20,14)(-5,-1)
\put(3,5){\circle*{2}}
\put(-.65,5){$\vee$}
\put(6,12){\circle*{2}}
\put(0,12){\circle*{2}}
\put(3,0){\circle*{2}}
\put(3,0){\line(0,1){5}}
\put(6,-3){\tiny #1}
\put(6,4){\tiny #2}
\put(9,12){\tiny #3}
\put(-5,12){\tiny #4}
\end{picture}}
\newcommand{\tdquatrecinq}[4]{\begin{picture}(12,19)(-2,-1)
\put(0,0){\circle*{2}}
\put(0,0){\line(0,1){5}}
\put(0,5){\circle*{2}}
\put(0,5){\line(0,1){5}}
\put(0,10){\circle*{2}}
\put(0,10){\line(0,1){5}}
\put(0,15){\circle*{2}}
\put(3,-2){\tiny #1}
\put(3,3){\tiny #2}
\put(3,9){\tiny #3}
\put(3,14){\tiny #4}
\end{picture}}
\newcommand{\photonboson}{\begin{fmffile}{photonboson}\parbox{12mm}{\begin{fmfgraph}(20,20)
\fmfleft{i1}
\fmfright{o1,o2}
\fmf{photon}{i1,v}
\fmf{fermion}{o1,v,o2}
\fmfblob{.3w}{v}
\end{fmfgraph}}\end{fmffile}}
\newcommand{\photon}{\begin{fmffile}{photon}\parbox{12mm}{\begin{fmfgraph}(20,20)
\fmfleft{i}
\fmfright{o}
\fmf{photon}{i,v}
\fmf{photon}{v,o}
\fmfblob{.3w}{v}
\end{fmfgraph}}\end{fmffile}}
\newcommand{\boson}{\begin{fmffile}{boson}\parbox{12mm}{\begin{fmfgraph}(20,20)
\fmfleft{i}
\fmfright{o}
\fmf{fermion}{i,v}
\fmf{fermion}{v,o}
\fmfblob{.3w}{v}
\end{fmfgraph}}\end{fmffile}}
\newcommand{\graphedeux}{\begin{fmffile}{diag4}\parbox{12mm}{\begin{fmfgraph}(30,30)
\fmfleft{i}\fmfright{o}\fmf{photon}{i,v1}\fmf{photon}{v2,o}\fmf{fermion,left,tension=.2}{v1,v2,v1}
\end{fmfgraph}}\end{fmffile}}
\newcommand{\graphetrois}{\begin{fmffile}{diag6}\parbox{16mm}{\begin{fmfgraph}(40,30)
\fmfleft{i}\fmfright{o}\fmf{fermion}{i,v1,v2,o}\fmf{photon,left,tension=0. }{v1,v2}
\end{fmfgraph}}\end{fmffile}}
\newcommand{\B}{\mathcal{B}}
\newcommand{\U}{\mathcal{U}}
\newcommand{\h}{\mathcal{H}}
\newcommand{\J}{\mathcal{J}}
\newcommand{\g}{\mathfrak{g}}
\newcommand{\hs}{\mathcal{H}_{(S)}}
\newcommand{\gs}{G_{(S)}}
\newcommand{\fleche}[1]{\stackrel{#1}{\longrightarrow}}
\title{General Dyson-Schwinger equations and systems}
\date{}
\author{Loïc Foissy\footnote{e-mail: loic.foissy@univ-reims.fr; webpage: http://loic.foissy.free.fr/pageperso/accueil.html}
\\
{\small{\it Laboratoire de Mathématiques, Université de Reims}}\\
\small{{\it Moulin de la Housse - BP 1039 - 51687 REIMS Cedex 2, France}}\\
}
\newtheorem{defi}{\indent Definition}
\newtheorem{lemma}[defi]{\indent Lemma}
\newtheorem{cor}[defi]{\indent Corollary}
\newtheorem{theo}[defi]{\indent Theorem}
\newtheorem{prop}[defi]{\indent Proposition}
\newenvironment{proof}{{\bf Proof.}}{\hfill $\Box$}
\begin{document}
\maketitle

ABSTRACT. We classify combinatorial Dyson-Schwinger equations giving a Hopf subalgebra of the Hopf algebra of Feynman graphs of the considered Quantum Field Theory.
We first treat single equations with an arbitrary number (eventually infinite) of insertion operators.
we distinguish two cases; in the first one, the Hopf subalgebra generated by the solution is isomorphic to the Faà di Bruno Hopf algebra
or to the Hopf algebra of symmetric functions; in the second case, we obtain the dual of the enveloping algebra of a particular associative algebra (seen as a Lie algebra).
We also treat systems with an arbitrary finite number of equations, with an arbitrary number of insertion operators, 
with at least one of degree $1$ in each equation. \\

Keywords. Dyson-Schwinger equations; rooted trees; pre-Lie algebras.\\

AMS classification. 16W30; 81T15; 81T18; 05C05.\\

\tableofcontents

\section*{Introduction}

Dyson-Schwinger equations are considered in Quantum Field Theory in order to compute the Green functions of the theory as series in the coupling constant.
More precisely, one considers a family of graphs, namely the Feynman diagrams. These graphs are organised in a graded, connected
Hopf algebra; the gradation is given by the number of loops of the graphs. For any primitive Feynman diagram $\gamma$, 
one constructs an insertion operator $B_\gamma$ on this Hopf algebra, and thse operators are used
to define a system of combinatorial equations satisfied by the expansion of the Green functions seen as series in Feynman graphs.

Let us give an example. In QED, we consider three series in Feynman graphs, here denoted by $\photonboson$, $\boson$ and $\photon$,
according to the external structure of the graph appearing in these series. These series satisfy the following system \cite{Yeats}:
\begin{eqnarray*}
\photonboson&=&\sum_{\gamma} B_\gamma\left(\frac{(1+\photonboson)^{1+2l(\gamma)}}
{(1-\photon)^{2l(\gamma)}(1-\boson)^{l(\gamma)}}\right),\\
\boson&=&B_{\graphedeux}\left(\frac{(1+\photonboson)^2}{(1-\photon)^2}\right),\\
 \photon&=&B_{\graphetrois}\left(\frac{(1+\photonboson)^2}{(1-\photon)(1-\boson)}\right).
\end{eqnarray*}
The sum in the first equations run over the set of primitive Feynman graphs with the required external structure.

Such a system has a unique solution. The homogeneous components of this solution generates a subalgebra of the Hopf algebra of Feynman graphs.
An important problem is to know if this subalgebra is Hopf or not; if the answer is affirmative, the next question is to describe this Hopf subalgebra
and to relate it to already known objects. This problem has been answered in the case of a single equation with a unique insertion operator in \cite{Foissy2,Foissy4},
and in the case of systems with a unique insertion operator in each equation in \cite{Foissy3,Foissy6}. These result does not answer the question
for the example described earlier, as an infinite number of insertion operators appears in the first equation. The aim of the present paper is 
to give an answer in the general case. \\

The key point is the fact that, as explained in \cite{Bergbauer,Connes,Kreimer1,Kreimer2},
at least in a convenient quotient of the Hopf algebra of Feynman graphs, the insertion operators satisfy the following $1$-cocycle equation: for all $x$,
$$\Delta \circ B_\gamma(x)=B_\gamma(x)\otimes 1+(Id \otimes B_\gamma)\circ \Delta(x).$$
This allows to replace Feynman graphs by decorated rooted trees and insertion operators by grafting operators, 
with the help of the universal property of the Connes-Kreimer Hopf algebra of rooted trees $\h_{CK}^J$(theorem \ref{1}). 
For example, for the preceding system, we work with trees decorated by the set $J=\{(1,k)\mid k\geq 1\}\cup\{2,3\}$. 
Any element $j$ of $J$ gives rise to a grafting operator $B_j$, consisting of grafting the differents trees of a rooted forest decorated by $J$ on a common root 
decorated by $j$. Using the universal property, we can now consider the system defined on $\h_{CK}^J$:
$$\left\{\begin{array}{rcl}
x_1&=&\displaystyle \sum_{k\geq 1}B_{(1,k)}\left(\frac{(1+x_1)^{1+2k}}{(1-x_2)^k(1-x_3)^{2k}}\right),\\[3mm]
x_2&=&\displaystyle B_2\left(\frac{(1+x_1)^2}{(1-x_3)^2}\right),\\[3mm]
x_3&=&\displaystyle B_3\left(\frac{(1+x_1)^2}{(1-x_2)(1-x_3)}\right).
\end{array}\right.$$
Here are the first terms of the solution:
\begin{eqnarray*}
x_1&=&\tdun{$(1,1)$}\hspace{4mm}+3\tddeux{$(1,1)$}{$(1,1)$}\hspace{4mm}+\tddeux{$(1,1)$}{$2$}\hspace{4mm}
+\tddeux{$(1,1)$}{$3$}\hspace{4mm}+\tdun{$(1,2)$}\hspace{4mm}\\[2mm]
&&+9\tdtroisdeux{$(1,1)$}{$(1,1)$}{$(1,1)$}\hspace{3mm}+3\tdtroisdeux{$(1,1)$}{$(1,1)$}{$2$}\hspace{3mm}
+6\tdtroisdeux{$(1,1)$}{$(1,1)$}{$3$}\hspace{3mm}+2\tdtroisdeux{$(1,1)$}{$2$}{$(1,1)$}\hspace{3mm}
+\tdtroisdeux{$(1,1)$}{$2$}{$3$}\hspace{3mm}+4\tdtroisdeux{$(1,1)$}{$3$}{$(1,1)$}\hspace{3mm}
+2\tdtroisdeux{$(1,1)$}{$3$}{$2$}\hspace{3mm}+2\tdtroisdeux{$(1,1)$}{$3$}{$3$}\hspace{3mm}\\[2mm]
&&+3\hspace{6mm}\tdtroisun{$(1,1)$}{$2$}{\hspace{-5mm}$(1,1)$}\hspace{3mm}
+6\hspace{6mm}\tdtroisun{$(1,1)$}{$3$}{\hspace{-5mm}$(1,1)$}\hspace{2mm}
+\tdtroisun{$(1,1)$}{$2$}{$2$}\hspace{3mm}+2\tdtroisun{$(1,1)$}{$3$}{$2$}\hspace{2mm}
+3\tdtroisun{$(1,1)$}{$3$}{$3$}\hspace{2mm}\\[2mm]
&&+3\tddeux{$(1,1)$}{$(1,2)$}\hspace{4mm}+5\tddeux{$(1,2)$}{$(1,1)$}\hspace{4mm}
+2\tddeux{$(1,2)$}{$2$}\hspace{4mm}+\tddeux{$(1,2)$}{$3$}\hspace{4mm}+\tdun{$(1,3)$}\hspace{4mm}+\ldots\\[5mm]
x_2&=&\tdun{$2$}+2\tddeux{$2$}{$(1,1)$}\hspace{4mm}+\tddeux{$2$}{$3$}\\[2mm]
&&+6\tdtroisdeux{$2$}{$(1,1)$}{$(1,1)$}\hspace{3mm}
+2\tdtroisdeux{$2$}{$(1,1)$}{$2$}\hspace{3mm}+4\tdtroisdeux{$2$}{$(1,1)$}{$3$}\hspace{3mm}
+4\tdtroisdeux{$2$}{$3$}{$(1,1)$}\hspace{3mm}+2\tdtroisdeux{$2$}{$3$}{$2$}+2\tdtroisdeux{$2$}{$3$}{$3$}\\[2mm]
&&+\hspace{6mm}\tdtroisun{$2$}{$(1,1)$}{\hspace{-5mm}$(1,1)$}\hspace{2mm}
+4\hspace{6mm}\tdtroisun{$2$}{$3$}{\hspace{-5mm}$(1,1)$}\hspace{2mm}+3\tdtroisun{$2$}{$3$}{$3$}
+2\tddeux{$2$}{$(1,2)$}\hspace{4mm}+\ldots\\[5mm]
x_3&=&\tdun{$3$}+2\tddeux{$3$}{$(1,1)$}\hspace{4mm}+\tddeux{$3$}{$2$}+\tddeux{$3$}{$3$}\\[2mm]
&&+6\tdtroisdeux{$3$}{$(1,1)$}{$(1,1)$}\hspace{3mm}
+2\tdtroisdeux{$3$}{$(1,1)$}{$2$}\hspace{3mm}+4\tdtroisdeux{$3$}{$(1,1)$}{$3$}\hspace{3mm}
+2\tdtroisdeux{$3$}{$3$}{$(1,1)$}\hspace{3mm}+\tdtroisdeux{$3$}{$3$}{$2$}+\tdtroisdeux{$3$}{$3$}{$3$}\\[2mm]
&&+\hspace{6mm}\tdtroisun{$3$}{$(1,1)$}{\hspace{-5mm}$(1,1)$}\hspace{2mm}
+2\hspace{6mm}\tdtroisun{$3$}{$2$}{\hspace{-5mm}$(1,1)$}\hspace{2mm}
+2\hspace{6mm}\tdtroisun{$3$}{$3$}{\hspace{-5mm}$(1,1)$}\hspace{2mm}
+\tdtroisun{$3$}{$2$}{$2$}+\tdtroisun{$3$}{$3$}{$2$}+\tdtroisun{$3$}{$3$}{$3$}+2\tddeux{$3$}{$(1,2)$}\hspace{4mm}+\ldots
\end{eqnarray*}
The degree of the decorations $(1,1)$, $2$ and $3$ is $1$, the degree of the decoration $(1,2)$ is $2$ and the degree of the decoration $(1,3)$ is $3$.
The universal property allows to construct a Hopf algebra morphism sending $x_1$ to $\photonboson$, $x_2$ to $\boson$, and $x_3$ to $\photon$. \\

Up to a simplification of the hypotheses (see section \ref{s22}), we can now consider without loss of generality systems of the form:
$$(S):\: \forall i\in I,  x_i=\sum_{j\in J_i} B_{(i,j)}\left(f^{(i,j)}(x_k, k\in I)\right),$$
defined on the Hopf algebra $\h_{CK}^J$, the set $J$ being of the form $\displaystyle \bigsqcup_{i\in I} J_i$, where $I$ is a finite set and for all $i\in I$, 
$J_i \subseteq \mathbb{N}^*$; the $f^{(i,j)}$ are formal series.
The grafting operator $B_{(i,j)}$ appearing in this system is homogeneous of degree $j$.\\

We treat in the third section of this text the case of a single equation, that is to say when $|I|=1$. We obtain two possibilities (theorem \ref{12}):
\begin{enumerate}
\item There exists $\lambda,\mu \in K$, such that the equation has the form:
$$x=\sum_{j\in J} B_j((1-\mu x)Q(x)^i),$$
where $Q(x)=(1-\mu x)^{-\lambda/\mu}$ if $\mu \neq 0$ and $Q(x)=e^{\lambda x}$ if $\mu=0$.
\item There exists $m\geq 1$ and $\alpha \in K$ such that the equation has the form:
$$x=\sum_{\substack{j\in J\\ m\mid j}}B_j(1+\alpha x)+\sum_{\substack{j\in J\\ m/ \hspace{-1.2mm} \mid \hspace{1.2mm} j}}B_j(1)$$
\end{enumerate} 

We prove that such an  equation indeed give a Hopf subalgebra $\hs$, and give a description of $\hs$ in the fourth section.
By the Cartier-Quillen-Milnor-Moore theorem \cite{Abe,Milnor}, it is the dual of an enveloping algebra, and it turns out that the underlying Lie algebra $\g$ 
has a complementary structure: it is pre-Lie, that is to say there is a (not necessarily associative) product $\circ$, satisfying for all $x,y,z\in \g$:
$$(x \circ y)\circ z-x\circ (y\circ z)=(y\circ x)\circ z-y\circ (x \circ z).$$
The Lie bracket is the antisymmetrization of $\circ$. This pre-Lie algebra has a basis $(e_i)_{i\geq 1}$ (if in the equation appears
a grafting operator of degree $1$; if not, the set of indices can be strictly included in $\mathbb{N}^*$). 
In the first case, the pre-Lie product is given by $e_i\circ e_j=(\lambda j-\mu)e_{i+j}$. If $\lambda \neq 0$,
this is isomorphic as a Lie algebra to the Faà di Bruno Lie algebra (corollary \ref{20}). Hence, the Hopf subalgebra generated by the solution of the equation 
is isomorphic to the Faà di Bruno Hopf algebra, that is to say the coordinate ring of the group of formal diffeomorphisms tangent to the identity at $0$.
If $\lambda=0$, $\g$ is abelian, and the Hopf subalgebra generated by the solution is isomorphic to the Hopf algebra of symmetric functions.
In the second case, $\circ$ is associative, given by $e_i \circ e_j=\alpha e_{i+j} $ if $j$ is a multiple of $m$ and $0$ otherwise (proposition \ref{21}).\\

We treat the case of systems in the last section. We assume here that in any equation of the system we consider, a grafting operator homogeneous of degree $1$
appears. Then the formal series of the system are entirely determined by the formal series correponding to these grafting operators of degree $1$.
By the classification for Dyson-Schwinger systems with a single operator by equation obtained in \cite{Foissy3}, we can obtain two types of systems,
called  \emph{fundamental} and \emph{quasicyclic} . The description of the other formal series is done in theorem \ref{23} and proposition \ref{25}.

Here is a typical example of a fundamental system (see corollary \ref{24}): here, $I=I_0 \sqcup J_0 \sqcup K_0$, and:
\begin{eqnarray*}
x_i&=&\sum_{q\in J_i} B_{(i,q)}\left((1-\beta_i x_i) \prod_{j\in I_0} (1-\beta_jx_j)^{-\frac{1+\beta_j}{\beta_j}q}
\prod_{j\in J_0}(1-x_j)^{-q}\right), \mbox{ if }i \in I_0,\\
x_i&=&\sum_{q\in J_i} B_{(i,q)}\left((1-x_i) \prod_{j\in I_0} (1-\beta_jx_j)^{-\frac{1+\beta_j}{\beta_j}q}
\prod_{j\in J_0}(1-x_j)^{-q}\right)\mbox{ if }i\in J_0,\\
x_i&=&\sum_{q\in J_i} B_{(i,q)}\left( \prod_{j\in I_0} (1-\beta_jx_j)^{-\frac{1+\beta_j}{\beta_j}q}
\prod_{j\in J_0}(1-x_j)^{-q}\right)\mbox{ if }j\in K_0.
\end{eqnarray*}
Here is a typical example of a quasi-cyclic system: $I=\mathbb{Z}/N\mathbb{Z}$, and for all $\overline{i}\in I$:
$$x_{\overline{i}}=\sum_{j\in J_{\overline{i}}} B_j(1+x_{\overline{i+j}}).$$

{\bf Notations.} Let $K$ be a field of characteristic zero. Any vector space, algebra, Hopf algebra, Lie algebra\ldots of this text will be taken over $K$. \\

{\bf Acknowledgements.} The author would like to thank Karen Yeats and the Simon Fraser University for their hospitality in june 2011.

\section{Recalls}

\label{s1}

\subsection{Hopf algebras of decorated trees}

Let $J$ be a nonempty set. A \emph{rooted tree decorated by $J$} is a couple $(t,d)$, where $t$ is a rooted tree,
that is to say a connected finite graph without loop, with a special vertex called the \emph{root}, and $d$ is a map from
the set of vertices of $t$ into $J$. For example, here are the decorated rooted trees with $k\leq 4$  vertices (the root is the vertex at the bottom of the graph):
$$\tdun{$a$};\: a\in J,\hspace{1cm} \tddeux{$a$}{$b$},\: (a,b)\in J^2;\hspace{1cm}
\tdtroisun{$a$}{$c$}{$b$}=\tdtroisun{$a$}{$b$}{$c$},\: \tdtroisdeux{$a$}{$b$}{$c$},\:(a,b,c)\in J^3;$$
$$ \tdquatreun{$a$}{$d$}{$c$}{$b$}=\tdquatreun{$a$}{$c$}{$d$}{$b$}=\ldots
=\tdquatreun{$a$}{$b$}{$c$}{$d$},\: \tdquatredeux{$a$}{$d$}{$b$}{$c$}=\tdquatretrois{$a$}{$b$}{$c$}{$d$},\:
 \tdquatrequatre{$a$}{$b$}{$d$}{$c$}= \tdquatrequatre{$a$}{$b$}{$c$}{$d$},\: \tdquatrecinq{$a$}{$b$}{$c$}{$d$},\:(a,b,c,d)\in J^4.$$

The Hopf algebra $\h_{CK}^J$ of rooted trees decorated by $J$  \cite{Connes,Kreimer1} is, as an algebra, freely generated by the set of these trees.
As a consequence, a basis of $\h_{CK}^J$ is given by the set of monomials in these trees, which are called \emph{rooted forests decorated by $J$}.
For example, here are the rooted forests with $k\leq 3$ vertices:
$$1,\hspace{1cm}, \tdun{$a$}, a\in J,\hspace{1cm} \tddeux{$a$}{$b$},\: \tdun{$a$}\tdun{$b$}=\tdun{$b$}\tdun{$a$},(a,b)\in J^2,$$
$$\tdun{$a$}\tdun{$b$}\tdun{$c$}=\tdun{$a$}\tdun{$c$}\tdun{$b$}=\ldots=\tdun{$c$}\tdun{$b$}\tdun{$a$},\:
\tdun{$a$}\tddeux{$b$}{$c$}=\tddeux{$b$}{$c$}\tdun{$a$},\:
\tdtroisun{$a$}{$c$}{$b$}=\tdtroisun{$a$}{$b$}{$c$},\:\tdtroisdeux{$a$}{$b$}{$c$},\:(a,b,c)\in J^3.$$

Let $t$ be a rooted tree decorated by $J$. An \emph{admissible cut} of $t$ is a nonempty choice $c$ of edges of $t$ such that any path in $t$ from the root to $t$
to a leaf meets at most one edge in $c$. Deleting these edges, $t$ becomes a forest $W^c(t)$. One of the trees of this forest contains the root of $t$:
it will be denoted by $R^c(t)$. The product of the other trees of $W^c(t)$ is denoted by $P^c(t)$. The coproduct of $\h_{CK}^J$ is then defined for all
tree $t$ by:
$$\Delta(t)=t\otimes 1+1\otimes t+\sum_{\scriptsize \mbox{$c$ admissible cut of $c$}}P^c(t)\otimes R^c(t).$$
Here is an example of coproduct:
$$\Delta(\tdquatredeux{$d$}{$c$}{$b$}{$a$})=\tdquatredeux{$d$}{$c$}{$b$}{$a$} \otimes 1+1\otimes \tdquatredeux{$d$}{$c$}{$b$}{$a$}
+\tddeux{$b$}{$a$} \otimes \tddeux{$d$}{$c$}+\tdun{$a$}\otimes \tdtroisun{$d$}{$c$}{$b$}+
\tdun{$c$} \otimes \tdtroisdeux{$d$}{$b$}{$a$} +\tddeux{$b$}{$a$}\tdun{$c$}\otimes \tdun{$d$}
+\tdun{$a$}\tdun{$c$}\otimes \tddeux{$d$}{$b$}.$$

For all $j \in J$, let $B_j:\h_{CK}^J \longrightarrow \h_{CK}^J$, sending a forest decorated by $J$ to the tree obtained by grafting
the trees of this forest on a common root decorated by $j$. For example, $B_j(\tdun{$k$}\tddeux{$i$}{$l$})=
\tdquatretrois{$j$}{$i$}{$l$}{$k$}$. It is proved in \cite{Connes} (in the non decorated version) that for all $j\in J$, for all $x \in \h_{CK}^J$,
$$\Delta\circ B_j(x)=B_j(x) \otimes 1+(Id \otimes B_j) \circ \Delta(x).$$
In other words, $B_j$ is a $1$-cocycle for the Cartier-Quillen cohomology. Moreover, the following universal property is satisfied:

\begin{theo} \label{1}
Let $A$ be a Hopf algebra and for all $j\in J$, let $L_j:A\longrightarrow A$ be a $1$-cocycle of $A$. There exists a unique Hopf algebra
morphism $\phi:\h_{CK}^J \longrightarrow A$ such that for all $j\in J$, $\phi \circ B_j=L_j \circ \phi$.
\end{theo}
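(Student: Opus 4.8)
The plan is to exploit the recursive structure of $\h_{CK}^J$: every decorated rooted tree $t$ can be written uniquely as $t=B_j(t_1\cdots t_n)$, where $j\in J$ is the decoration of the root and $t_1,\ldots,t_n$ are the trees obtained by deleting the root, and moreover $\h_{CK}^J$ is the free commutative algebra on the set of decorated trees. Uniqueness is immediate from this: since $\phi$ must be an algebra morphism it is determined by its values on trees, and the intertwining relation forces $\phi(t)=\phi(B_j(t_1\cdots t_n))=L_j(\phi(t_1)\cdots\phi(t_n))$, so an induction on the number of vertices shows $\phi$ is completely pinned down. For existence I would run this same recursion (on the number of vertices, with base case $\phi(\tdun{$a$})=L_a(1)$) to define $\phi$ on trees, then extend multiplicatively to all forests; freeness of $\h_{CK}^J$ as a commutative algebra guarantees that this extension is a well-defined algebra morphism, and it satisfies $\phi\circ B_j=L_j\circ\phi$ by construction.

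The substantive part is verifying that $\phi$ is a coalgebra morphism, i.e. $\Delta_A\circ\phi=(\phi\otimes\phi)\circ\Delta$, which I would prove by induction on the number of vertices. The clean point is that this property is \emph{multiplicative}: if it holds on $x$ and on $y$ it holds on $xy$, because $\Delta$, $\Delta_A$, $\phi$ and $\phi\otimes\phi$ are all algebra morphisms. Hence the induction reduces to single trees $t=B_j(F)$ with $F=t_1\cdots t_n$ a forest of one fewer vertex.

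On the $\h_{CK}^J$ side the cocycle identity gives $\Delta(t)=t\otimes 1+(Id\otimes B_j)\Delta(F)$; applying $\phi\otimes\phi$ and using $\phi\circ B_j=L_j\circ\phi$ yields $(\phi\otimes\phi)\Delta(t)=\phi(t)\otimes 1+(Id\otimes L_j)(\phi\otimes\phi)\Delta(F)$. On the $A$ side the cocycle identity for $L_j$ gives $\Delta_A(\phi(t))=\Delta_A(L_j(\phi(F)))=\phi(t)\otimes 1+(Id\otimes L_j)\Delta_A(\phi(F))$, and the induction hypothesis applied to $F$ lets me replace $\Delta_A(\phi(F))$ by $(\phi\otimes\phi)\Delta(F)$. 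The two expressions then coincide, closing the induction (the base case $t=\tdun{$a$}=B_a(1)$ being a direct check).

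Finally, compatibility with the counits follows from the cocycle identity itself, which forces $\varepsilon_A\circ L_j=0$ (apply $\varepsilon_A\otimes Id$ to the cocycle relation), so $\varepsilon_A\circ\phi$ annihilates every tree exactly as the counit of $\h_{CK}^J$ does; and once $\phi$ is a bialgebra morphism between Hopf algebras it automatically commutes with the antipodes, so $\phi$ is a Hopf algebra morphism. The only real delicacy I anticipate is bookkeeping: setting up the induction so that the multiplicative reduction to trees and the recursion $t=B_j(F)$ interlock correctly along the grading by number of vertices, and matching the two one-cocycle relations term by term.
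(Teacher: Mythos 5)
Your proof is correct. Note that the paper itself gives no proof of this theorem: it appears in the ``Recalls'' section and is cited from the Connes--Kreimer reference \cite{Connes}. Your argument --- uniqueness and existence by recursion on the root decomposition $t=B_j(t_1\cdots t_n)$ together with freeness of $\h_{CK}^J$ as a commutative algebra, then the inductive verification of compatibility with $\Delta$ by matching the two $1$-cocycle identities, the counit check via $\varepsilon_A\circ L_j=0$, and the automatic compatibility of bialgebra morphisms with antipodes --- is precisely the standard proof given in that literature, with no gaps.
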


\subsection{Gradation and graded dual}

We now assume that $J$ is a \emph{graded, connected} set, that is to say $J$ is given a map $deg:J \longrightarrow \mathbb{N}^*$. 
for any $j\in J$, $deg(j)$ will be called the \emph{degree} of $j$. In this case, $\h_{CK}^J$ becomes a graded Hopf algebra,
the decorated forests being homogeneous of the degree given by the sum of the degrees of their decorations.
For this gradation, $B_j$ si homogeneous of the same degree as $j$.\\

Let us consider a (convenient quotient of a) Hopf algebra $\h$ of Feynman graphs and let $(\gamma_j)_{j\in J}$ be a family of primitive Feynman graphs
of $\h$. We give a gradation to $J$ by putting $j\in J$ of degree the number of loops of $\gamma_j$. By the universal property, 
there exists a unique Hopf algebra morphism $\phi$ from $\h_{CK}^J$ to $\h$, such that $\phi\circ B_j=B_{\gamma_j} \circ \phi$ for all $j\in J$. 
It is not difficult to show that this morphism is homogeneous of degree $0$. As a consequence, it is possible to lift any systems of Dyson-Schwinger equations using 
the insertion operators $B_{\gamma_j}$ to a system of Dyson-Schwinger equations in $\h_{CK}^J$ using the operators $B_j$. Hence, $\phi$ sends 
the homogeneous components of the solution of these equations in $\h_{CK}^J$ to the homogeneous components of the solution in $\h$. Consequently, 
if these components generate a Hopf subalgebra of $\h_{CK}^J$, it is also the case in $\h$. \\

If for all $n \geq 1$, the number of elements of $J$ of degree $n$ is finite, then the homogeneous components of $\h_{CK}^J$ are finite-dimensional.
So the graded dual $(\h_{CK}^J)^*$ of $\h_{CK}^J$ is also a Hopf algebra. As $(\h_{CK}^J)^*$ is graded, connected, commutative,
$(\h_{CK}^J)^*$ is graded, connected, cocommutative. By the Cartier-Quillen-Milnor-Moore theorem, it is the enveloping algebra of a certain Lie algebra  
$\g_{CK}^J$. To any rooted tree $t$ decorated by $J$, we associate a linear form on $\h_{CK}^J$ also denoted by $t$ by $\langle t,F\rangle=s_t \delta_{t,F}$ 
for any rooted forest $F$ decorated by $J$, where $s_t$ is the number of symmetries of $t$. Then the set of rooted trees decorated by $J$ becomes a basis 
of $\g_{CK}^J$. By similarity with the non-decorated situation of \cite{Connes}, the bracket of $\g_{CK}^J$ is given by:
$$[t,t']=\sum\mbox{grafting of  $t$ over $t'$}-\sum \mbox{graftings of $t'$ over $t$}.$$
This bracket is the antisymmetrization of the product defined by:
$$t\circ t'=\sum\mbox{grafting of  $t$ over $t'$}$$
For example, $\tdun{$a$}\circ \tdtroisun{$b$}{$d$}{$c$}=\tdquatreun{$b$}{$d$}{$c$}{$a$}+\tdquatredeux{$b$}{$d$}{$c$}{$a$}
+\tdquatretrois{$b$}{$d$}{$a$}{$c$}$. This product is not associative, but is (left) pre-Lie, that is to say, for all $x,y,z \in \g_{CK}^J$:
$$(x \circ y) \circ z-x \circ (y\circ z)=(y\circ x)\circ z-y\circ (x \circ z).$$
By the results of \cite{Chapoton2}, $\g_{CK}^J$ is the free pre-Lie algebra generated by the $\tdun{$j$}$'s, $j\in J$.
For more details on the Hopf algebra $(\h_{CK}^J)^*$, see section \ref{s4}. \\

{\bf Remark.} The solutions of Dyson-Schwinger equations are not elements of $\h_{CK}^J$: we have to complete this space, in the following sense.
We consider the case where $J$ is a graded, connected set, in such a way that $\h_{CK}^J$ is a graded, connected Hopf algebra.
The valuation $val$ on $\h_{CK}^J$ associated to this gradation induces a distance on $\h_{CK}^J$ defined by $d(x,y)=2^{-val(x-y)}$.
The space $\h_{CK}^J$ is not complete for this distance; the completion of $\h_{CK}^J$ is the space of formal series in rooted trees decorated by trees.
So elements of this completion can uniquely be written under the form $\sum a_F F$, where the sum runs over all rooted forests $F$ decorated by $J$.

\subsection{The Faà di Bruno Hopf algebra}

Let us consider the group of formal diffeomorphisms of the line tangent to the identity:
$$G_{FdB}=\{x+a_1x^2+a_2x^3+\ldots\:\mid\: \forall i\geq 1,a_i \in K\}.$$
The product of this group is the usual composition of formal series. The Faà di Bruno Hopf algebra $\h_{FdB}$ is the co-opposite of the coordinate ring of 
$G_{FdB}$. As an algebra, it is the free associative, commutative algebra generated by the $x_i$'s, $\geq 1$, where:
$$x_i:\left\{\begin{array}{rcl}
G_{FdB}&\longrightarrow&K\\
x+a_1x^2+\ldots&\longrightarrow&a_i.
\end{array}\right.$$
The coproduct is defined by $\Delta(f)(F_1\otimes F_2)=f(F_2\circ F_1)$, for all $f\in \h_{FdB}$, for all $F_1,F_2\in G_{FdB}$.
For example:
\begin{eqnarray*}
\Delta(x_1)&=&x_1 \otimes 1+1\otimes x_1,\\
\Delta(x_2)&=&x_2 \otimes 1+1\otimes x_2+2x_1 \otimes x_1,\\
\Delta(x_3)&=&x_3 \otimes 1+1\otimes x_3+2x_2 \otimes x_1+3x_1 \otimes x_2+x_1^2 \otimes x_1.
\end{eqnarray*}
It is a graded, connected, commutative Hopf algebra. By the Cartier-Quillen-Milnor-Moore theorem, its dual is an enveloping algebra.
The underlying Lie algebra has a basis $(e_i)_{i\geq 1}$, dual to the $x_i$'s, and the Lie bracket is given by $[e_i,e_j]=(j-i)e_{i+j}$.
One can also define a product on this Lie algebra by $e_i \circ e_j(x_k)=(e_i \otimes e_j) \circ \Delta(x_k)$ for all $i,j,k\geq 1$.
This gives $e_i \circ e_j=(j+1)e_{i+j}$. This product is pre-Lie and induces the Lie bracket by antisymmetrization.

\section{Definitions of Hopf systems}

\subsection{Definitions}

\begin{defi} \begin{enumerate}
\item Let us choose a finite, non-empty set $I$. For any $i\in I$, let $J_i$ be a graded, connectet set.
We put $J=\{(i,j)\:\mid\: i\in I, j\in I_i\}$ and we work in the Hopf algebra $\h_{CK}^J$ of rooted trees decorated by $J$.
It is a graded Hopf algebra, the degree of the decoration $(i,j)$ being the degree of $j$. 
\item For all $i\in I$, for all $j\in J_i$, let $f^{(i,j)} \in K[[h_k,k\in I]]$. The \emph{system of Dyson-Schwinger equations} associated to these data is:
$$(S):\: \forall i\in I,  x_i=\sum_{j\in J_i} B_{(i,j)}\left(f^{(i,j)}(x_k, k\in I)\right).$$
\item This system has a unique solution in the completion of $\h_{CK}^J$, denoted by $x=(x_i)_{i\in I}$. 
For all $i\in I$, for all $n \geq 1$, the homogeneous component of degree $n$ of $x_i$ is denoted by $x_i(n)$. 
If the subalgebra $\hs$ generated by the $x_i(n)$'s is Hopf, we shall say that the system is \emph{Hopf}. 
\end{enumerate} \end{defi}

{\bf Notations.} \begin{enumerate}
\item We shall often take $I=\{1,\ldots,N\}$. For all $(i,q) \in J$, we put:
$$f^{(i,q)}=\sum_{p_1,\ldots,p_N} a^{(i,q)}_{(p_1,\ldots,p_N)} h_1^{p_1}\ldots h_N^{p_N}.$$
\item The coefficient of $h_j$ in $f^{(i,q)}$ is also denoted by $a^{(i,q)}_j$; the coefficient of $h_j h_k$ in $f^{(i,q)}$
is also denoted by $a^{(i,q)}_{j,k}$, and so on. \\
\end{enumerate}

The unique solution of $(S)$ is denoted in the following way: for all $i\in I$, $x_i=\sum a_t t$, where the sum is over all trees with a root decorated
by an element $(i,x)$, with $x \in J_i$. The coefficients $a_t$ are computed inductively:
\begin{itemize}
\item If $t=\tun_{(i,q)}$, $a_t=a^{(i,q)}_{(0,\ldots,0)}$.
\item If $t=B_{(i,q)}\left(t_{1,1}^{p_{1,1}}\ldots t_{1,k_1}^{p_{1,k_1}}\ldots 
t_{N,1}^{p_{N,1}}\ldots t_{N,k_N}^{p_{N,k_N}}\right)$, where the $t_{i,j}$'s are different trees, the root of $t_{i,j}$ being
decorated by an element $(i,x)$ with $x\in J_i$:
$$a_t=a^{(i,q)}_{(p_{1,1}+\ldots+p_{1,k_1},\ldots,p_{N,1}+\ldots+p_{N,k_N})}
\prod_{l=1}^N \frac{(p_{l,1}+\ldots+p_{l,k_l})!}{p_{l,1}!\ldots p_{l,k_l}!}\prod_{j,k} a_{t_{j,k}}^{p_{j,k}}.$$
\end{itemize}

\subsection{Simplification of the hypotheses}

\label{s22} We shall only consider systems with non-zeros $x_i$'s.  If $x_i=0$, we can forget one equation and send $h_i$ to zero in all the formal series which appear,
and this gives a system with a strictly smaller number of equations, giving the same subalgebra. In this case, for all $i\in I$, $x_i$ is a non-zero infinite span
of rooted trees with roots decorated by elements of the form $(i,j)$, $j\in J_i$. Consequently, the $x_i$'s are algebraically independent.

\begin{lemma}
Let $(S)$ be a Hopf SDSE, and let $(i,j) \in J$. If $f^{(i,j)}(0)=0$, then $f^{(i,j)}=0$.
\end{lemma}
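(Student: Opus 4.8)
The plan is to argue by contraposition: assuming $f^{(i,j)}\neq 0$ while $f^{(i,j)}(0)=0$, I will reach a contradiction with the hypothesis that $(S)$ is Hopf. The whole argument hinges on the one-vertex tree \tdun{$(i,j)$} and on the following tension: the assumption $f^{(i,j)}(0)=0$ forces \tdun{$(i,j)$}$\,\notin\hs$, whereas the assumption $f^{(i,j)}\neq 0$ together with Hopfness forces \tdun{$(i,j)$}$\,\in\hs$.

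First I would show that $f^{(i,j)}(0)=0$ implies \tdun{$(i,j)$}$\,\notin\hs$. I examine the homogeneous component $\hs(j)$. As $\hs$ is generated as an algebra by the $x_k(m)$, any element of $\hs(j)$ is a linear combination of products $\prod x_k(m_k)$ of total degree $j$. A product of at least two generators is a forest with at least two connected components, so it has no component on a single (connected) tree. Hence the projection $\pi$ of $\h_{CK}^J$ onto the span of single trees rooted at some $(i,x)$ annihilates every such product as well as every generator $x_k(m)$ with $k\neq i$, giving $\pi(\hs(j))=Kx_i(j)$. Since $x_i=\sum_{j'\in J_i}B_{(i,j')}(f^{(i,j')}(x))$, the coefficient of \tdun{$(i,j)$} in $x_i(j)$ equals $f^{(i,j)}(0)=0$; therefore no element of $\hs(j)$ has a nonzero component on \tdun{$(i,j)$}, and in particular \tdun{$(i,j)$}$\,\notin\hs$.

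Next I would use Hopfness to show the reverse membership. Because the $x_k$ are algebraically independent (section \ref{s22}) and $f^{(i,j)}\neq 0$, the element $f^{(i,j)}(x)$ is nonzero with vanishing constant term, so some forest $F_0\neq 1$ appears in it with a nonzero coefficient; then $t_0=B_{(i,j)}(F_0)$ appears in $x_i$ with that same coefficient $a_{t_0}\neq 0$. Put $n=\deg(t_0)$, so that $\deg(F_0)=n-j$. Writing the coproduct of the homogeneous component as $\Delta(x_i(n))=\sum_\ell \ell\otimes Q_\ell$, the sum over a basis of pruned forests $\ell$, Hopfness gives $\Delta(x_i(n))\in\hs\otimes\hs\subseteq\h_{CK}^J\otimes\hs$, and the linear independence of the left legs forces every $Q_\ell\in\hs$; applying this to $\ell=F_0$ yields $Q_{F_0}\in\hs(j)$. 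Finally I read off the coefficient of \tdun{$(i,j)$} in $Q_{F_0}$: using the description of $\Delta$ recalled in section \ref{s1}, the only pair consisting of a tree of $x_i(n)$ and an admissible cut whose root-component is \tdun{$(i,j)$} and whose pruned part is $F_0$ is $t_0$ cut along all the edges issued from its root, so that coefficient is $a_{t_0}\neq 0$. Thus $Q_{F_0}\in\hs$ has a nonzero component on \tdun{$(i,j)$}, contradicting the previous paragraph; this contradiction proves $f^{(i,j)}=0$.

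The delicate step is the second membership, and within it the claim that \tdun{$(i,j)$} genuinely survives in $Q_{F_0}$. The point to get right is that fixing the pruned forest to be $F_0$ isolates, among all admissible cuts of all trees occurring in $x_i(n)$, exactly the total root-cut of $t_0$ as a source of the right leg \tdun{$(i,j)$}: this rests on the elementary observation that $R^c(t)=$ \tdun{$(i,j)$} holds only when $t$ has root decorated $(i,j)$ and $c$ cuts every edge issued from that root. The auxiliary facts, namely that products of at least two generators carry no single-tree component and that the left legs of a coproduct may be taken linearly independent so as to extract each right part into $\hs$, are routine once stated; the remaining bookkeeping is exactly the $1$-cocycle computation of section \ref{s1}.
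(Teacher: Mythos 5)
Your proof is correct and is essentially the paper's own argument: both rest on the observation that no element of $\hs$ can have a component on the one-vertex tree $\tun_{(i,j)}$ (its coefficient in $x_i(j)$ being $f^{(i,j)}(0)=0$), combined with the fact that the part of $\Delta(x_i)$ pairing with $\tun_{(i,j)}$ on the right-hand leg is exactly $f^{(i,j)}(x_k,k\in I)$, and with the algebraic independence of the $x_k$'s. The paper states this directly --- the $1$-cocycle property exhibits the term $f^{(i,j)}(x_k,k\in I)\otimes \tun_{(i,j)}$ in $\Delta(x_i)$, which must then vanish --- whereas you run the same computation in contrapositive, transposed form, extracting the single matrix element $F_0\otimes \tun_{(i,j)}$ via the admissible-cut description; this is a difference of presentation (and of added rigour in the bookkeeping), not of route.
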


\begin{proof} As $f^{(i,j)}(0)=0$, $\tun_{(i,j)} \notin \h_{(S)}$.  Moreover, the term $f^{(i,j)}(x_k,k\in I)\otimes \tun_{(i,j)}$
appears in the coproduct of $x_i$, so is an element of the completion of $\h_{(S)}\otimes \h_{(S)}$. As $\tun_{(i,j)}\notin \h_{(S)}$,
we deduce that $f^{(i,j)}(x_k,k\in I)=0$. As the $x_i$'s are algebraically independent, $f^{(i,j)}=0$. \end{proof}\\

{\bf Remark.} So we shall assume in the sequel that for any $(i,j)\in J$, $f^{(i,j)}(0)\neq 0$. Up to a normalization,
we shall assume that $f^{(i,j)}(0)=1$ for all $i,j \in I$.

\begin{lemma}
Let $(S)$ be a Hopf SDSE. Let us fix $i\in I$. If $j,k \in I_i$ have the same degree, then $f^{(i,j)}=f^{(i,k)}$.
\end{lemma}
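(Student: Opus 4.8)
The plan is to exploit the cocycle description of $\Delta(x_i)$ together with the fact that, when $(S)$ is Hopf, $\Delta(x_i)$ lies in the completion of $\hs\otimes\hs$; the whole argument mirrors the mechanism of the previous lemma, now comparing the two one-vertex trees $\tun_{(i,j)}$ and $\tun_{(i,k)}$. Write $d=\deg(j)=\deg(k)$. First I would isolate, for each $m\in J_i$, the part of $\Delta(x_i)$ whose right-hand tensor factor is the one-vertex tree $\tun_{(i,m)}$. Since $\tun_{(i,m)}=B_{(i,m)}(1)$, and since an admissible cut with $R^c(t)=\tun_{(i,m)}$ forces $t$ to have root decorated $(i,m)$ with all edges issued from the root cut, the cocycle identity $\Delta\circ B_{(i,m)}=B_{(i,m)}\otimes 1+(Id\otimes B_{(i,m)})\circ\Delta$ shows this part is exactly $f^{(i,m)}(x_k,k\in I)\otimes\tun_{(i,m)}$. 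Viewing $\tun_{(i,m)}$ as the linear form on $\h_{CK}^J$ dual to itself, so that $\langle\tun_{(i,m)},F\rangle$ is $1$ if $F=\tun_{(i,m)}$ and $0$ otherwise (recall $s_{\tun_{(i,m)}}=1$), this reads
$$(Id\otimes\tun_{(i,m)})\circ\Delta(x_i)=f^{(i,m)}(x_k,k\in I).$$

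The key step is then to show that the linear form $\tun_{(i,j)}-\tun_{(i,k)}$ vanishes on $\hs$. This form is homogeneous of degree $d$ and supported on one-vertex trees, so it annihilates every forest having at least two trees; in particular it annihilates all products of at least two generators $x_l(n)$. It remains to test it on a single generator $x_l(n)$: by homogeneity the pairing is zero unless $n=d$, and $\langle\tun_{(i,m)},x_l(d)\rangle$ equals the coefficient of the tree $\tun_{(i,m)}$ in $x_l(d)$. Because the only way $\tun_{(i,m)}$ occurs in $x_l$ is as the lowest-degree term $f^{(l,m)}(0)\,\tun_{(l,m)}$ of $B_{(l,m)}(f^{(l,m)})$, this coefficient is $\delta_{l,i}f^{(i,m)}(0)=\delta_{l,i}$ after the normalization $f^{(i,m)}(0)=1$. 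Hence $\langle\tun_{(i,j)}-\tun_{(i,k)},x_l(d)\rangle=\delta_{l,i}(1-1)=0$, and the form indeed kills all of $\hs$.

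To conclude, since $(S)$ is Hopf we have $\Delta(x_i)\in\widehat{\hs\otimes\hs}$, so applying $Id\otimes(\tun_{(i,j)}-\tun_{(i,k)})$, a form which annihilates the right factor, gives $0$. By the first step this quantity equals $f^{(i,j)}(x_k,k\in I)-f^{(i,k)}(x_k,k\in I)$, so these two elements of the completion coincide; as the $x_k$ are algebraically independent, $f^{(i,j)}=f^{(i,k)}$ as formal series.

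I expect the main obstacle to be the second step: making the orthogonality argument rigorous in the completed tensor product, and checking that no contribution to the $\tun_{(i,m)}$-component of $\Delta(x_i)$ is overlooked (equivalently, that $R^c(t)$ is a one-vertex tree only for the cut removing every edge at the root). Both points hinge on the normalization $f^{(i,m)}(0)=1$ and on the fact that one-vertex-tree forms annihilate multi-tree forests, exactly as in the preceding lemma.
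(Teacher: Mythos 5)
Your proof is correct and is essentially the paper's own argument in dualized form: the paper observes that $\tun_{(i,j)}$ and $\tun_{(i,k)}$ have equal coefficients (both $1$) in every element of $\hs$, writes $\Delta(x_i)=x_i\otimes 1+\sum y_t\otimes t$ with $y_{\tun_{(i,m)}}=f^{(i,m)}(x_l,l\in I)$, and concludes $y_{\tun_{(i,j)}}=y_{\tun_{(i,k)}}$ from $\Delta(x_i)\in\widehat{\hs\otimes\hs}$, exactly as you do by pairing with the linear form $\tun_{(i,j)}-\tun_{(i,k)}$ that vanishes on $\hs$. The only difference is presentational: you make explicit, via the cocycle identity and the duality pairing, steps the paper leaves implicit.
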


\begin{proof} Let $n=deg(j)=deg(k)$.  The coefficients of $\tun_{(i,j)}$ and $\tun_{(i,k)}$ in $x_i(n)$ are both equal to $1$.
So in any element of $\h_{(S)}$, The coefficients of $\tun_{(i,j)}$ and $\tun_{(i,k)}$ in $x_i$ are equal. Let us put:
$$\Delta(x_i)=x_i \otimes 1+\sum y_t \otimes t,$$
where the sum is over all the rooted trees with a root decorated by $(i,j)$, $j\in J_i$. As $\Delta(x_i)$ is in the completion of $\h_{(S)}\otimes \h_{(S)}$,
$y_{\tun_{(i,j)}}=y_{\tun_{(i,k)}}$. Moreover:
$$f^{(i,j)}(x_l,l\in J)=y_{\tun_{(i,j)}}=y_{\tun_{(i,k)}}=f^{(i,k)}(x_l,l\in J).$$
As the $x_i$'s are algebraically independent, $f^{(i,j)}=f^{(i,k)}$.  \end{proof}\\

Hence, as a consequence, if $(S)$ is a Hopf system, it can be written as:
$$\forall i\in I,\: x_i=\sum_{n\geq 1} \underbrace{\left(\sum_{j\in J_i,\:deg(j)=n} B_{(i,j)}\right)}_{B_{(i,n)}}\left(f^{(i,n)}
(x_k, k\in I)\right),$$
where $f^{(i,n)}$ is any $f^{(i,j)}$ such that $deg(j)=n$. \\

Finally, it is enough to consider only the Hopf SDSE such that $J_i \subseteq \mathbb{N}^*$ for all $i\in I$, the degree being the canonical inclusion
of $J_i$ into $\mathbb{N}^*$.

\begin{prop}
Let $(S)$ be a SDSE of the form:
$$(S):\: \forall i\in I,  x_i=\sum_{j\in J_i} B_{(i,j)}\left(f^{(i,j)}(x_k, k\in I)\right),$$
with for all $i\in I$, $1\in J_i \subseteq \mathbb{N}^*$. The \emph{truncation at $1$} of $(S)$ is the SDSE:
$$(S'):\: \forall  i\in I, x'_i=B_{(i,1)}\left(f^{(i,1)}(x'_k, k\in I)\right).$$
If $(S)$ is Hopf, then $(S')$ is also Hopf. \end{prop}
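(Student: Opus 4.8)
The plan is to exhibit the truncated system $(S')$ as the image of $(S)$ under a canonical Hopf-algebra retraction, and then to transport the Hopf property along this morphism. I set $J'=\{(i,1)\mid i\in I\}\subseteq J$, so that $\h_{CK}^{J'}$ is the Hopf subalgebra of $\h_{CK}^J$ spanned by the forests all of whose vertices are decorated by elements of $J'$. I define $\pi:\h_{CK}^J\longrightarrow \h_{CK}^{J'}$ on the forest basis by $\pi(F)=F$ when every vertex of $F$ is decorated by an element of $J'$, and $\pi(F)=0$ otherwise; being homogeneous of degree $0$, it extends continuously to the completion. That $\pi$ is an algebra morphism and a retraction onto $\h_{CK}^{J'}$ is immediate from its definition on forests.

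First I would verify that $\pi$ is also a coalgebra morphism, hence a morphism of Hopf algebras. By multiplicativity it suffices to check $(\pi\otimes\pi)\circ\Delta(t)=\Delta\circ\pi(t)$ on a single tree $t$. If all decorations of $t$ lie in $J'$ the identity is clear, since every tree occurring in $W^c(t)$ for an admissible cut $c$ is again decorated by $J'$. If some vertex $v$ of $t$ carries a decoration outside $J'$, then $\pi(t)=0$; and in $\Delta(t)=t\otimes 1+1\otimes t+\sum_c P^c(t)\otimes R^c(t)$ the vertex $v$ belongs to exactly one connected component of each $W^c(t)$, so one of the two tensor factors always contains $v$ and is annihilated by $\pi$, giving $(\pi\otimes\pi)\circ\Delta(t)=0$ as required. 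This compatibility is the heart of the argument and the step I expect to be the most delicate, since it forces one to track in which tensor factor of each admissible cut the offending vertex lands.

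Next I would compute $\pi$ on the solution. Grafting on a root decorated by $(i,j)$ yields a tree with root decoration $(i,j)$, so $\pi\circ B_{(i,j)}=0$ for $j\neq 1$ while $\pi\circ B_{(i,1)}=B_{(i,1)}\circ\pi$. Applying $\pi$ to the fixed-point equation $x_i=\sum_{j\in J_i}B_{(i,j)}(f^{(i,j)}(x_k,k\in I))$ and using that the continuous algebra morphism $\pi$ passes inside each series, every term with $j\neq 1$ vanishes and one obtains $\pi(x_i)=B_{(i,1)}(f^{(i,1)}(\pi(x_k),k\in I))$. Hence $(\pi(x_i))_{i\in I}$ is a solution of $(S')$, and by uniqueness of the solution $\pi(x_i)=x'_i$; as $\pi$ has degree $0$ this refines to $\pi(x_i(n))=x'_i(n)$ for all $n$.

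Finally I would conclude by transport of structure. Since $\pi$ is an algebra morphism, $\pi(\hs)$ is the subalgebra generated by the elements $\pi(x_i(n))=x'_i(n)$, that is $\pi(\hs)=\h_{(S')}$. Because $(S)$ is Hopf, $\hs$ is stable under $\Delta$, and then for every $a\in\hs$ one has $\Delta(\pi(a))=(\pi\otimes\pi)\circ\Delta(a)\in\pi(\hs)\otimes\pi(\hs)$, so $\h_{(S')}=\pi(\hs)$ is a Hopf subalgebra. This proves that $(S')$ is Hopf.
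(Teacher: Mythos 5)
Your proof is correct and takes essentially the same route as the paper: the paper's proof introduces exactly this projection $\phi:\h_{CK}^J\longrightarrow\h_{CK}^{J'}$ killing every forest with a vertex decorated by some $(i,j)$, $j\neq 1$, asserts it is "clearly" a Hopf algebra morphism with $\phi(x)=x'$, and concludes that $\phi(\h_{(S)})=\h_{(S')}$ is Hopf. You have merely supplied the details the paper omits (the admissible-cut argument for compatibility with $\Delta$, the intertwining relations $\pi\circ B_{(i,j)}=0$ for $j\neq 1$ and $\pi\circ B_{(i,1)}=B_{(i,1)}\circ\pi$, and the appeal to uniqueness of the solution of $(S')$), all of which are accurate.
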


\begin{proof} Let $\phi: \h_{CK}^J \longrightarrow \h_{CK}^{J'}$ being the projection on $\h_{CK}^{J'}$ 
sending any forest with at least a vertex decorated by an element $(i,j)$, $j\neq 1$, to zero. 
It is clearly a Hopf algebra morphism. Moreover, $\phi(x)=x'$, so $\phi(\h_{(S)})=\h_{(S')}$.
As a consequence, if $\h_{(S)}$ is Hopf, $\h_{(S')}$ also is.  \end{proof}

\subsection{Structure constants associated to a Hopf SDSE}

Let $(S)$ be a SDSE. For any decorated rooted tree, we denote by $a_t$ the coefficient of $t$ in the unique $x_i$ where it may appear.

\begin{prop}\label{6}
Let $(S)$ be a SDSE. If it is Hopf, then for any $i,i' \in I$, $q\in J_{i'}$, for any $n\in \mathbb{N}^*$, there exists $\lambda_n^{(i,(i',q))}$
such that for all $t$ of degree $n$, with the root decorated by an element $(i,j)$, $j\in J_i$:
$$\lambda_n^{(i,(i',q))} a_t=\sum_{t'} n_{(i',q)}(t,t')a_{t'},$$
where $n_{(i',q)}$ is the number of leaves $s$ of $t'$ decorated by $(i',q)$ such that the cut of $s$ gives $t$.
\end{prop}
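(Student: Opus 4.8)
The plan is to realize the combinatorial sum $\sum_{t'} n_{(i',q)}(t,t')a_{t'}$ as the coefficient of a single fixed tensor in the coproduct of $x_i$, and then to let the Hopf hypothesis force this quantity to be proportional to $a_t$. First I would introduce the operator $D:=(\tun_{(i',q)} \otimes \mathrm{Id})\circ \Delta$ on (the completion of) $\h_{CK}^J$, where $\tun_{(i',q)}$ denotes the linear form associated with the one-vertex tree, so that $\langle \tun_{(i',q)},F\rangle=\delta_{F,\tun_{(i',q)}}$. Using the coproduct formula $\Delta(t')=t'\otimes 1+1\otimes t'+\sum_c P^c(t')\otimes R^c(t')$, the form $\tun_{(i',q)}$ is non-zero on the left tensorand exactly for those cuts $c$ with $P^c(t')$ equal to the single vertex $\tun_{(i',q)}$, i.e. for the cuts of one leaf $s$ decorated by $(i',q)$, for which $R^c(t')=t'\setminus s$. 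Grouping these leaves by the resulting tree gives $D(t')=\sum_t n_{(i',q)}(t,t')\,t$, and hence the coefficient of a tree $t$ in $D(x_i)=\sum_{t'}a_{t'}D(t')$ is precisely $\sum_{t'} n_{(i',q)}(t,t')a_{t'}$, the right-hand side of the statement.

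Next I would invoke the Hopf hypothesis. Since $\tun_{(i',q)}$ is homogeneous of degree $\deg(q)$, the operator $D$ is homogeneous of degree $-\deg(q)$, so $D(x_i)(n)=(\tun_{(i',q)}\otimes \mathrm{Id})\,\Delta\bigl(x_i(n+\deg q)\bigr)$. As $(S)$ is Hopf, $\Delta(x_i(n+\deg q))$ lies in the completion of $\hs\otimes \hs$, and applying $(\tun_{(i',q)}\otimes \mathrm{Id})$ sends this space into the completion of $\hs$; being homogeneous of degree $n$, the element $D(x_i)(n)$ therefore lies in $\hs(n)$. The decisive point is now structural: $D(x_i)(n)$ is a linear combination of \emph{single} trees, since cutting a leaf of a tree rooted at $(i,\cdot)$ leaves a tree rooted at $(i,\cdot)$. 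But $\hs(n)$ is spanned by the monomials $x_{i_1}(n_1)\cdots x_{i_r}(n_r)$ with $n_1+\cdots+n_r=n$, and every such monomial with $r\geq 2$ expands into forests having at least two connected components. As single trees and forests with $\geq 2$ components are distinct basis elements of $\h_{CK}^J$, the single-tree part of $\hs(n)$ is exactly the span of $\{x_{i''}(n):i''\in I\}$, whence $D(x_i)(n)=\sum_{i''}\mu_{i''}x_{i''}(n)$ for scalars $\mu_{i''}$.

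Finally, every tree occurring in $D(x_i)(n)$ has its root decorated by an element of $\{(i,j):j\in J_i\}$, whereas for $i''\neq i$ the series $x_{i''}(n)$ involves only trees rooted at $(i'',\cdot)$, a disjoint family of basis trees; comparing supports forces $\mu_{i''}=0$ for $i''\neq i$. Setting $\lambda_n^{(i,(i',q))}:=\mu_i$ gives $D(x_i)(n)=\lambda_n^{(i,(i',q))}x_i(n)$, and comparing the coefficients of a fixed degree-$n$ tree $t$ rooted at $(i,\cdot)$ yields $\lambda_n^{(i,(i',q))}a_t=\sum_{t'} n_{(i',q)}(t,t')a_{t'}$, as claimed. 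The main obstacle is this structural step: one must argue, via the number of connected components and then via the root decoration, that the single-tree part of $\hs(n)$ collapses to a single scalar multiple of $x_i(n)$; this is exactly where the grading and the previously established algebraic independence of the $x_i$ are used.
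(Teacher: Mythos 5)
Your proof is correct and follows essentially the same route as the paper: both arguments use the Hopf hypothesis to expand $\Delta(x_i)$ over $\hs\otimes\hs$ and then identify the coefficient of $\tun_{(i',q)}\otimes t$, your operator $D=(\tun_{(i',q)}\otimes \mathrm{Id})\circ\Delta$ being just an explicit way of performing that extraction. Your structural step (single trees versus forests with at least two components, then comparison of root decorations) is exactly the justification behind the paper's brief assertion that $x_{i'}(q)\otimes x_i(n)$ is the unique basis monomial of $\hs\otimes\hs$ in which $\tun_{(i',q)}\otimes t$ appears.
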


\begin{proof} A basis $\B$ of $\h_{(S)}$ is given by the monomials in the $x_k(p)$'s.
As $\Delta(x_i)$ is an element of the completion of $\h_{(S)}\otimes \h_{(S)}$, it can be written in the basis $\B \otimes \B$.
The unique element of this basis where $\tun_{(i',q)} \otimes t$ appears is $x_{i'}(q) \otimes x_i(n)$.
Let us denote by $\lambda_n^{(i,(i',q))}$ the coefficient of this element in $\Delta(x_i)$. Identifying the coefficient  of $\tun_{(i',q)} \otimes t$
in $\Delta(x_i)$ and $\lambda_n^{(i,(i',q))}x_{i'}(q) \otimes x_i(n)$ in the tensor basis of forests, we obtain:
$$\lambda_n^{(i,(i',q))}a_t=\sum_{t'} n_{(i',q)}(t,t')a_{t'},$$
by definition of the coproduct. \end{proof}\\

{\bf Remark.} The converse is true; the proof uses the fact that the dual Hopf algebra of the Hopf algebra of rooted tree is the enveloping algebra
of a free pre-Lie algebra. This result will not be used here.

\begin{lemma}\label{7}
Let $(S)$ be a Hopf SDSE, such that $1 \in J_i$ for all $i\in I=\{1,\ldots,N\}$. For all $i\in I$, $q\in J_i$, we put:
$$f^{(i,q)}=\sum_{(p_1,\ldots,p_N)} a^{(i,q)}_{(p_1,\ldots,p_N)}h_1^{p_1}\ldots h_N^{p_N}.$$
Then for all $(p_1,\ldots,p_N)\in \mathbb{N}^N$, for all $i,j\in \{1,\ldots,N\}$, for all $q\in I_i$:
$$a^{(i,q)}_{(p_1,\ldots,p_j+1,\ldots,p_N)}=\frac{1}{p_j+1}\left(\lambda_{p_1+\ldots+p_n+q}^{(i,(j,1))}
-\sum_{l=1}^N a^{(l,1)}_j p_l\right)a^{(i,q)}_{(p_1,\ldots,p_N)}.$$
\end{lemma}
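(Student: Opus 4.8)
The plan is to derive the recurrence in Lemma~\ref{7} by applying the structural relation of Proposition~\ref{6} to a carefully chosen tree $t$, namely the one whose coefficient $a_t$ encodes exactly the multinomial data of $f^{(i,q)}$. The key observation is that Proposition~\ref{6} gives, for each fixed target $(j,1)$ and each degree $n$, a single scalar $\lambda_n^{(i,(j,1))}$ controlling how leaves decorated by $(j,1)$ attach. I would first recall the inductive formula for the coefficients $a_t$ given just before section~\ref{s22}, which expresses $a_t$ for $t=B_{(i,q)}(\cdots)$ in terms of the $a^{(i,q)}_{(\ldots)}$ and the coefficients of the subtrees. The strategy is to isolate a tree $t$ of the form $B_{(i,q)}\bigl(\tun_{(1,1)}^{p_1}\cdots \tun_{(N,1)}^{p_N}\bigr)$ — a root decorated by $(i,q)$ carrying $p_l$ leaves decorated by $(l,1)$ for each $l$ — so that $a_t = a^{(i,q)}_{(p_1,\ldots,p_N)}\prod_l (\text{multinomial factor})$, and the multinomial factors are trivial here because all the grafted subtrees are single vertices.

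\textbf{Next I would compute both sides of the relation in Proposition~\ref{6}} with $(i',q)$ replaced by $(j,1)$, applied to this specific $t$ of degree $n=p_1+\ldots+p_N+q$. The right-hand side $\sum_{t'} n_{(j,1)}(t,t')\,a_{t'}$ requires enumerating the trees $t'$ that produce $t$ after cutting a single leaf decorated by $(j,1)$. There are two sources of such $t'$: either $t'$ is obtained from $t$ by grafting one extra $(j,1)$-leaf directly onto the root (giving $B_{(i,q)}$ with one more $(j,1)$-leaf, i.e.\ the multi-index $(p_1,\ldots,p_j+1,\ldots,p_N)$), or $t'$ is obtained by grafting an extra $(j,1)$-leaf onto one of the existing leaves $\tun_{(l,1)}$, turning it into $\tddeux{$(l,1)$}{$(j,1)$}$. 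Computing $a_{t'}$ in each case via the inductive coefficient formula, and keeping track of the combinatorial multiplicities $n_{(j,1)}(t,t')$ (the number of $(j,1)$-leaves whose cut returns $t$), yields the right-hand side. The coefficient $a^{(l,1)}_j$ appears precisely as the $h_j$-coefficient of $f^{(l,1)}$, governing the second source.

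\textbf{The main obstacle I anticipate} is the careful bookkeeping of the combinatorial factors: I must correctly count the symmetry-weighted multiplicities when several leaves carry the same decoration, and reconcile the $(p_j+1)$ factor on the left with the multinomial normalizations hidden in the $a_{t'}$ for the ``graft on the root'' term. Specifically, the tree with multi-index $(p_1,\ldots,p_j+1,\ldots,p_N)$ carries a coefficient $a^{(i,q)}_{(p_1,\ldots,p_j+1,\ldots,p_N)}$ with no nontrivial multinomial factor (single-vertex subtrees), but the multiplicity $n_{(j,1)}$ counting how this $t'$ cuts down to $t$ equals $p_j+1$, which is exactly the source of the $1/(p_j+1)$ normalization. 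Matching these factors precisely is the delicate step. Once the enumeration is done, I would substitute $a_t = a^{(i,q)}_{(p_1,\ldots,p_N)}$ (after normalizing, using $f^{(i,q)}(0)=1$ so that single-vertex coefficients are controlled), isolate the term $a^{(i,q)}_{(p_1,\ldots,p_j+1,\ldots,p_N)}$, and divide through.

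\textbf{Finally I would assemble the identity} into the stated form $a^{(i,q)}_{(p_1,\ldots,p_j+1,\ldots,p_N)}=\frac{1}{p_j+1}\bigl(\lambda_{p_1+\ldots+p_N+q}^{(i,(j,1))}-\sum_{l=1}^N a^{(l,1)}_j p_l\bigr)a^{(i,q)}_{(p_1,\ldots,p_N)}$, where the $\lambda$ term comes from the ``graft on root'' contribution and the $\sum_l a^{(l,1)}_j p_l$ correction comes from the ``graft on an existing leaf'' contributions, one for each of the $p_l$ leaves of type $(l,1)$, each weighted by the coefficient $a^{(l,1)}_j$ measuring how a $(j,1)$-vertex attaches above an $(l,1)$-vertex. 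The antisymmetry between the direct root-grafting and the leaf-grafting terms is what makes the recurrence solvable degree by degree.
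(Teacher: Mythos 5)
Your proposal is correct and follows essentially the same route as the paper: apply Proposition \ref{6} to the corolla $t=B_{(i,q)}\bigl(\tun_{(1,1)}^{p_1}\ldots\tun_{(N,1)}^{p_N}\bigr)$, enumerate the two kinds of trees $t'$ (extra $(j,1)$-leaf on the root, with multiplicity $p_j+1$, versus a $(j,1)$-leaf grafted on an existing $(l,1)$-leaf, contributing $p_l\,a^{(l,1)}_j$ in total), and evaluate the coefficients via the inductive formula. The bookkeeping you flag as delicate works out exactly as you anticipate, so no gap remains.
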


\begin{proof} We apply the preceding lemma with $t=B_{(i,q)}\left(\tun_{(1,1)}^{p_1}\ldots \tun_{(N,1)}^{p_N}\right)$. It gives:
\begin{eqnarray*}
\lambda^{(i, (j,1))}_{p_1+\ldots+p_n+q}a_t&=&
(p_j+1)a_{B_{(i,q)}\left(\tun_{(1,1)}^{p_1}\ldots \tun_{(j,1)}^{p_j+1}\ldots \tun_{(N,1)}^{p_N}\right)}\\
&&+\sum_{l=1}^N a_{ B_{(i,q)}\left(\tun_{(1,1)}^{p_1}\ldots  \tun_{(l,1)}^{p_l-1}\ldots \tun_{(N,1)}^{p_N}
\tdeux_{(l,1)}^{(j,1)}\right)}.
\end{eqnarray*}
Computing the different coefficients $a_t$ appearing in this formula, we obtain immediately the result. \end{proof}\\

{\bf Remarks.} \begin{enumerate}
\item As a consequence, if $a^{(i,q)}_{(p_1,\ldots,p_N)}=0$, then 
$a^{(i,q)}_{(p'_1,\ldots,p'_N)}=0$ if for all $n$, $p'_n \geq p_n$. In particular, if $f^{(i,q)}$ is not constant, there exists
$j\in J$ such that $a^{(i,q)}_j\neq 0$.
\item Using the results of \cite{Foissy3}, if for all $i\in J$, $1\in I_i$ and there are no constant $f^{(i,1)}$,
the SDSE $x_i=B_i\left(f^{(i,1)}(x_i)\right)$ for all $i\in I$ generates a Hopf subalgebra and this allows to compute the coefficients 
$\lambda_n^{(i,(j,1))}$ from the coefficients $a^{(i,1)}_j$ and $a^{(i,1)}_{j,k}$ (they are the coefficients $\lambda_n{(i,j)}$ of \cite{Foissy3}). 
Consequently, the formal series $f^{(i,1)}$ determine uniquely all the formal series $f^{(i,q)}$.
\item It is possible to prove that $\lambda_n^{(i,(j,q))}$ does not depend on $q$. This will not be used in the sequel.
\end{enumerate}

\section{Case of a single equation}

We here treat the case of a single equation, that is to say that $I$ is reduced to a single element.
As a consequence, the indices $i$ are not needed, as they are all equal. The equation shall now be written as:
$$x=\sum_{j\in J} B_j\left(f^{(j)}(x)\right),$$
where $J\subseteq \mathbb{N}^*$ and for all $j\in J$, $f^{(j)}(0)=1$. We shall also write $\lambda^{(j)}_n$ instead of $\lambda_n^{(i,(i,j))}$, 
for any $j\in J$. We put, for all $j\in J$:
$$f^{(j)}=\sum_{n=0}^\infty a^{(j)}_n h^n.$$
The unique solution can be written as $x=\sum a_t t$, where the sum is over all trees decorated by $J$, the coefficients $a_t$ being inductively computed as follows:
\begin{itemize}
\item $a_{\tdun{$j$}}=1$ for all $j\in J$.
\item If $t=B_j(t_1^{p_1}\ldots t_k^{p_k})$, where $t_1,\ldots,t_k$ are different trees, then:
$$a_t=a^{(j)}_{p_1+\ldots+p_k} \frac{(p_1+\ldots+p_k)!}{p_1!\ldots p_k!} a_{t_1}^{p_1}\ldots a_{t_k}^{p_k}.$$
\end{itemize}

\subsection{Non constant formal series}

\begin{lemma}\label{8}
Let us consider $i\in J$, such that $f^{(i)}$ is non constant. There exists $\alpha_i,\beta_i \in K$, with $\alpha_i\neq 0$, such that for all $j \in J$, for all $n\geq 1$:
$$\lambda_{ni}^{(j)}=\alpha_i(1+(1+\beta_i)(n-1)).$$
Moreover:
$$f^{(i)}=\sum_{k=0}^\infty \frac{\alpha_i^k(1+\beta_i)\ldots(1+(k-1)\beta_i)}{k!}  h^k=
\left\{\begin{array}{c}
e^{\alpha_i h} \mbox{ if }\beta_i=0,\\[2mm]
(1-\alpha_i \beta_i h)^{-1/\beta_i} \mbox{ if }\beta_i \neq 0.
\end{array}\right.$$
\end{lemma}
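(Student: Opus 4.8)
The plan is to derive everything from Proposition~\ref{6} applied to two families of trees decorated entirely by $i$: corollas and ladders. First I would record a ``corolla recurrence'' playing the role of Lemma~\ref{7} when $1\notin J$. Applying Proposition~\ref{6} with grafting decoration $i$ to the corolla $t=B_i(\tdun{$i$}^p)$ (degree $(p+1)i$), the only trees $t'$ with an $i$-leaf whose cut returns $t$ are $B_i(\tdun{$i$}^{p+1})$, obtained by grafting on the root (with $n_i=p+1$), and $B_i(\tdun{$i$}^{p-1}\tddeux{$i$}{$i$})$, obtained by grafting on a leaf (with $n_i=1$). Evaluating the coefficients with the recursive formula for the $a_t$ gives
$$\lambda_{(p+1)i}^{(i)}\,a_p^{(i)}=(p+1)\,a_{p+1}^{(i)}+p\,a_1^{(i)}\,a_p^{(i)}.$$
For $p=0$ this reads $a_1^{(i)}=\lambda_i^{(i)}$, and the relation shows $a_p^{(i)}=0\Rightarrow a_{p+1}^{(i)}=0$; since $f^{(i)}$ is non-constant this forces $a_1^{(i)}\neq 0$, so I set $\alpha_i:=a_1^{(i)}\neq 0$.

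The core step, which yields the affineness in $n$ and the independence of $j$ at once, is a computation on ladders. Fix $j\in J$ and $n\geq 1$ and apply Proposition~\ref{6} with grafting decoration $j$ to the ladder $\ell_n$ of $n$ vertices all decorated $i$ (degree $ni$, coefficient $\alpha_i^{n-1}$). Every contributing $t'$ is obtained by grafting a single $j$-leaf on some vertex $v_k$ of $\ell_n$, so it has at most one branching vertex; hence its coefficient is a power of $\alpha_i$ times at most one factor $a_2^{(i)}$. Grafting on the top contributes $\alpha_i^{n}$, while each of the $n-1$ lower positions contributes $2\,a_2^{(i)}\alpha_i^{n-2}$ to $\sum_{t'}n_j(\ell_n,t')a_{t'}$: for $j\neq i$ through the multinomial factor $2$, and for $j=i$ through the doubled cut-count $n_i=2$ at the near-top vertex (whose two children then coincide). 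Summing,
$$\lambda_{ni}^{(j)}\,\alpha_i^{n-1}=\alpha_i^{n}+2(n-1)\,a_2^{(i)}\,\alpha_i^{n-2},$$
and dividing by $\alpha_i^{n-1}\neq 0$ gives $\lambda_{ni}^{(j)}=\alpha_i+2(n-1)a_2^{(i)}/\alpha_i$, which is independent of $j$ and affine in $n$. Setting $\beta_i$ by $2a_2^{(i)}/\alpha_i=\alpha_i(1+\beta_i)$ produces exactly $\lambda_{ni}^{(j)}=\alpha_i(1+(1+\beta_i)(n-1))$.

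Finally I would feed this affine formula back into the corolla recurrence: the prefactor collapses to $\lambda_{(p+1)i}^{(i)}-p\alpha_i=\alpha_i(1+p\beta_i)$, whence $a_{p+1}^{(i)}=\frac{\alpha_i(1+p\beta_i)}{p+1}\,a_p^{(i)}$. Solving this first-order recurrence from $a_0^{(i)}=1$ gives the announced product formula for the coefficients, and recognizing the binomial series (or the exponential series when $\beta_i=0$) identifies $f^{(i)}$ with $(1-\alpha_i\beta_i h)^{-1/\beta_i}$, respectively $e^{\alpha_i h}$.

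The one delicate point is the combinatorics of the ladder step: one must enumerate all ways of grafting a $j$-leaf onto $\ell_n$, check that every branched tree carries a single factor $a_2^{(i)}$ with all remaining vertices contributing $\alpha_i$, and verify the bookkeeping that makes the $j=i$ and $j\neq i$ cases agree. Once this collapse is established, the affineness of $\lambda_{ni}^{(j)}$, its independence of $j$, and the closed form of $f^{(i)}$ all follow with no further input.
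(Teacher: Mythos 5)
Your proof is correct and takes essentially the same route as the paper's: Proposition~\ref{6} applied to the corollas $B_i(\tdun{$i$}^{\,p})$ yields the recurrence and forces $a_1^{(i)}\neq 0$, Proposition~\ref{6} applied to the ladders $B_i^n(1)$ yields the $j$-independent affine formula for $\lambda_{ni}^{(j)}$, and substituting that formula back into the corolla recurrence gives the closed form of $f^{(i)}$. The only difference is presentational: you spell out the $j=i$ bookkeeping in the ladder step (symmetry factor $1$ but cut-count $2$ at the near-top vertex), which the paper leaves implicit.
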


\begin{proof} Let us apply proposition \ref{6} with $t=B_i(\tdun{$i$}^n)$ and $j=i$. The only trees $t'$ such that $n_i(t,t')\neq 0$ are $B_i(\tdun{$i$}^{n+1})$ and $B_i(\tdun{$i$}^{n-1}\tddeux{$i$}{$i$})$, so:
$$\lambda_{i(n+1)}^ia^{(i)}_n=(n+1)a^{(i)}_{n+1}+na^{(i)}_1a^{(i)}_n.$$
Equivalently:
\begin{equation}
\label{EQ0} a^{(i)}_{n+1}=\frac{1}{n+1}\left(\lambda_{i(n+1)}^i-n a^{(i)}_1\right) a^{(i)}_n.
\end{equation}
Hence, if $a^{(i)}_1=0$, an easy induction proves that $a^{(i)}_n=0$ for all $n\geq 1$, so $f^{(i)}$ is constant: this is a contradiction.
So $a^{(i)}_1 \neq 0$. \\

Let us now apply proposition \ref{6} with $t=B_i^n(1)$, that is to say  the ladder with $n$ vertices all decorated by $i$. The trees $t'$ such that $n_j(t,t')\neq 0$ 
are $B_i^n \circ B_j(1)$, $B_i^{n-1}(\tdun{$i$}\tdun{$j$})$ and $B_i^k (\tdun{$j$} B_i^{n-k}(1))$, $1\leq k \leq n-2$. So:
$$\lambda_{ni}^{(j)} \left(a^{(i)}_1\right)^{n-1}=\left(a^{(i)}_1\right)^n+2(n-1)\left(a^{(i)}_1\right)^{n-2}a_2^{(i)}.$$
As $a^{(i)}_1\neq 0$, $\lambda_{ni}^{(j)}=a^{(i)}_1+2\frac{a^{(i)}_2}{a^{(i)}_1}(n-1)$.
We then take $\alpha_i=a^{(i)}_1$ and $\beta_i=\frac{2a_2^{(i)}}{\left(a^{(i)}_1\right)^2}-1$, and 
the assertion on $\lambda_{ni}^{(j)}$ is now proved. Replacing in (\ref{EQ0}), we obtain for all $n \geq 1$:
$$a^{(i)}_{n+1}=\frac{1}{n+1}\alpha_i(1+\beta_i n)a^{(i)}_n.$$
The formula for the coefficients of $f^{(i)}$ is the proved by an easy induction. \end{proof}

\begin{lemma}
There exists $\lambda,\mu \in K$, such that if $f^{(i)}$ is non constant, then $\alpha_i=\lambda i-\mu\neq 0$
and $\beta_i=\frac{\mu}{\lambda i-\mu}$.
\end{lemma}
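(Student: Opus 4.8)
The plan is to exploit the previous lemma (Lemma \ref{8}), which tells us that for each index $i\in J$ with $f^{(i)}$ non constant, there are scalars $\alpha_i\neq 0$ and $\beta_i$ governing the coefficients $\lambda^{(j)}_{ni}$, \emph{together with the crucial feature that the right-hand side $\alpha_i(1+(1+\beta_i)(n-1))$ does not depend on $j$}. So I have, for every non-constant index $i$ and every $j\in J$,
$$\lambda^{(j)}_{ni}=\alpha_i\bigl(1+(1+\beta_i)(n-1)\bigr).$$
The aim is to show that the pair $(\alpha_i,\beta_i)$ is forced to depend on $i$ through two \emph{global} constants $\lambda,\mu$ via $\alpha_i=\lambda i-\mu$ and $\beta_i=\mu/(\lambda i-\mu)$. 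I would first recast these target formulas into a more symmetric shape: setting $\lambda,\mu$ so that $\alpha_i=\lambda i-\mu$ and $\alpha_i\beta_i=\mu$ (equivalently $\alpha_i(1+\beta_i)=\lambda i$), the claim becomes the pair of affine-in-$i$ identities $\alpha_i=\lambda i-\mu$ and $\alpha_i(1+\beta_i)=\lambda i$. Thus it suffices to prove that both $\alpha_i$ and $\alpha_i(1+\beta_i)$ are affine functions of $i$ with matching slopes and compatible intercepts.

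The key step is to extract relations \emph{between different} indices $i$ by evaluating the structure constants on a single tree in two ways. Concretely, I would pick two non-constant indices $i,i'$ and apply Proposition \ref{6} to a tree that involves both, for instance a ladder or a tree of the form $B_i(\tdun{$i'$}^m)$ or $B_{i'}(\tdun{$i$}^m)$, so that the same coefficient $\lambda$ is forced to satisfy two expressions simultaneously. Since $\lambda^{(j)}_{ni}$ is independent of $j$, I can in particular read off $\lambda^{(i')}_{\,i}=\alpha_i$ (the $n=1$ value) and, symmetrically, $\lambda^{(i)}_{\,i'}=\alpha_{i'}$; comparing the coefficient produced by grafting $\tdun{$i'$}$ onto $B_i(1)$ against the coefficient produced by grafting $\tdun{$i$}$ onto $B_{i'}(1)$ yields a linear relation tying $\alpha_i,\alpha_{i'}$ to $i,i'$. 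Iterating this over all non-constant indices, together with the degree-$1$ index (which is non constant by the Remark following Lemma \ref{7}, since $1\in J$), pins down the affine dependence: the $n$-dependence already established in Lemma \ref{8} fixes the slope in $n$, and the cross-index relations fix the slope in $i$ and force consistency, giving constants $\lambda$ and $\mu$ valid for all non-constant $i$ at once.

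I would then verify that the two derived affine relations are mutually consistent, i.e. that the $\mu$ coming from $\alpha_i=\lambda i-\mu$ agrees with the $\mu=\alpha_i\beta_i$ coming from the $(1+\beta_i)$ relation; this amounts to checking the intercepts match, which the cross-index computation delivers because the same tree coefficient is being equated. Finally $\alpha_i=\lambda i-\mu\neq 0$ is automatic from $\alpha_i\neq 0$ in Lemma \ref{8}, and $\beta_i=\mu/(\lambda i-\mu)$ follows by solving $\alpha_i\beta_i=\mu$. The main obstacle I anticipate is the bookkeeping in the two-index computation: choosing the right trees so that a single coefficient identity cleanly separates the $i$-dependence from the $n$-dependence, and making sure the symmetry $\lambda^{(j)}_{ni}$-independent-of-$j$ is used in exactly the place that forces the \emph{global} constants rather than merely per-index constants. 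Once that identity is set up correctly, the rest is elementary linear algebra in $i$.
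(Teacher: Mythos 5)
Your reformulation of the target (that it suffices to prove $\alpha_i\beta_i$ equals a global constant $\mu$ and $\alpha_i(1+\beta_i)=\lambda i$ for a global $\lambda$) is exactly right, and so is your observation that the independence of $\lambda^{(j)}_{ni}$ on the superscript $j$ is the crucial feature of Lemma \ref{8}. But the core of the argument --- the mechanism that actually produces a constraint \emph{between two different indices} --- is missing, and what you propose in its place does not work. You suggest applying Proposition \ref{6} to mixed trees such as $B_i(\tdun{$i'$}^m)$ or $B_{i'}(\tdun{$i$}^m)$. Such a tree has degree $i+mi'$ (resp.\ $i'+mi$), which in general is a multiple of neither $i$ nor $i'$, so the identity you get involves a coefficient $\lambda^{(j)}_{i+mi'}$ about which Lemma \ref{8} says nothing. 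Since the coefficients of $f^{(i)}$ and $f^{(i')}$ are already pinned down by Lemma \ref{8}, that identity merely \emph{defines} this new unknown in terms of known quantities; it imposes no relation between $(\alpha_i,\beta_i)$ and $(\alpha_{i'},\beta_{i'})$. Likewise, reading off $\lambda^{(i')}_{i}=\alpha_i$ and $\lambda^{(i)}_{i'}=\alpha_{i'}$ gives two independent statements with no equation tying them together, so no ``linear relation tying $\alpha_i,\alpha_{i'}$ to $i,i'$'' follows from that comparison. (Also, your appeal to a degree-one index is unfounded: nothing in this section assumes $1\in J$, and the lemma must hold without it.)

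The missing idea is purely arithmetic and requires no further tree computations at all: evaluate the formula of Lemma \ref{8} at degrees that are \emph{common multiples} of the two indices. For $i,j$ both non constant and any $n\geq 1$, the degree $nij$ is simultaneously $(nj)\cdot i$ and $(ni)\cdot j$, so Lemma \ref{8} (whose right-hand side does not depend on the superscript) gives two closed forms for the same number:
$$\lambda^{(j)}_{nij}=\alpha_i\bigl(1+(1+\beta_i)(nj-1)\bigr)=nj\,\alpha_i(1+\beta_i)-\alpha_i\beta_i,
\qquad
\lambda^{(j)}_{nij}=ni\,\alpha_j(1+\beta_j)-\alpha_j\beta_j.$$
Two affine functions of $n$ agreeing for all $n\geq 1$ have equal slopes and equal constant terms, whence $\alpha_i\beta_i=\alpha_j\beta_j=:\mu$ for all non-constant $i,j$, and $j\,\alpha_i(1+\beta_i)=i\,\alpha_j(1+\beta_j)$, i.e.\ $\alpha_i(1+\beta_i)/i$ is a constant $\lambda$. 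Then $\alpha_i=\alpha_i(1+\beta_i)-\alpha_i\beta_i=\lambda i-\mu$, which is nonzero by Lemma \ref{8}, and $\beta_i=\mu/\alpha_i=\mu/(\lambda i-\mu)$. This common-multiple evaluation is the step your sketch gestures at (``the same coefficient $\lambda$ is forced to satisfy two expressions simultaneously'') but never sets up; without it, the concluding linear algebra in $i$ has nothing to run on.
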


\begin{proof} We denote by $J'$ be the set of indices $i\in J$ such that $f^{(j)}$ is non constant. Let $i,j\in J'$. 
Let us compute $\lambda_{nij}^{(j)}$ in two different ways:
\begin{eqnarray*}
\lambda_{nij}^{(j)}&=&\lambda_{(nj)i}^{(j)}\\
&=&\alpha_i(1+(1+\beta_i)(nj-1))\\
&=&n j\alpha_i(1+\beta_i)-\alpha_i \beta_i,\\
&=&\lambda_{(ni)j}^{(j)}\\
&=&ni\alpha_j(1+\beta_j)-\alpha_j\beta_j.
\end{eqnarray*}
As this is true for all $n \geq 1$, we deduce that $\alpha_i\beta_i=\alpha_j\beta_j$ and $j \alpha_i(1+\beta_i)=i \alpha_j(1+\beta_j)$ for all $i,j \in J'$.
From the first equality, we deduce that there exists $\mu\in K$, such that $\alpha_i\beta_i=\mu$ for all $i\in J'$.
The second equality implies that the vectors $(\alpha_i(1+\beta_i))_{i\in J'}$
and $(i)_{i\in J'}$ are colinear, so there exists $\lambda \in K$, such that $\alpha_i(1+\beta_i)=\lambda i$ for all $i \in J'$.
Hence, $\alpha_i+\mu=\lambda i\neq 0$ as $f_i$ is not constant, and $\beta_i=\mu/\alpha_i$. \end{proof}\\

Let us sum up these results. If $i\in J$, such that $f^{(i)}$ is not constant, then:
$$f^{(i)}=\left\{\begin{array}{l}
(1-\mu h)^{-\frac{\lambda i}{\mu} +1}\mbox{ if }\mu \neq 0,\\[2mm]
e^{\lambda i h} \mbox{ if }\mu=0.
\end{array}\right.$$
This gives:

\begin{prop}\label{10}
Let $(E)$ be a Hopf Dyson-Schwinger equation. Then $J$ can be written as $J=J'\sqcup J''$, and there exists $\lambda, \mu\in K$, $\lambda \neq 0$, such that if we put:
$$Q(h)=\left\{\begin{array}{l}
(1-\mu h)^{-\frac{\lambda}{\mu}} \mbox{ if }\mu \neq 0,\\[2mm]
e^{\lambda h} \mbox{ if }\mu=0,
\end{array}\right.$$
then:
$$(E): x=\sum_{j\in J'} B_j\left((1-\mu x)Q(x)^i\right)+\sum_{j\in J''} B_j(1).$$
\end{prop}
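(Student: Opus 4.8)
The plan is to assemble Proposition~\ref{10} directly from the two preceding lemmas, which together pin down every non-constant formal series $f^{(i)}$ appearing in the equation. First I would set $J'=\{i\in J\mid f^{(i)}\text{ is non constant}\}$ and $J''=J\setminus J'$, so that by definition the terms indexed by $J''$ are precisely those with $f^{(j)}=1$ (recall we normalized $f^{(j)}(0)=1$, and a constant series with this normalization equals $1$). This immediately accounts for the summand $\sum_{j\in J''}B_j(1)$.

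Next I would invoke the second lemma to produce the single global pair $\lambda,\mu\in K$ such that for every $i\in J'$ one has $\alpha_i=\lambda i-\mu\neq 0$ and $\beta_i=\frac{\mu}{\lambda i-\mu}$. The point is that although Lemma~\ref{8} gives each non-constant $f^{(i)}$ its own constants $\alpha_i,\beta_i$, the compatibility computation of $\lambda_{nij}^{(j)}$ in two ways forces these to be governed by the two uniform parameters $\lambda$ and $\mu$. I should also check that $\lambda\neq 0$: if $\lambda=0$ then $\alpha_i=-\mu$ for all $i\in J'$, but the relation $\alpha_i(1+\beta_i)=\lambda i$ would give $\alpha_i(1+\beta_i)=0$, and combined with $\alpha_i\neq 0$ this yields $1+\beta_i=0$, i.e. $\mu=\alpha_i\beta_i=-\alpha_i$ together with $\alpha_i=-\mu$, which is consistent only in a degenerate way; the cleanest route is to argue that $\lambda=0$ would force all non-constant series to coincide with $Q$-type series of a form already covered, or simply to fold it into the statement's hypothesis. (Since the proposition asserts $\lambda\neq 0$, I expect this to be guaranteed by the running assumption that $J'\neq\emptyset$ contains a non-constant series together with the colinearity argument of the second lemma.)

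The main computational step is then to substitute these values into the explicit formula for $f^{(i)}$ from Lemma~\ref{8} and recognize the closed form $(1-\mu x)Q(x)^i$. Concretely, in the case $\mu\neq 0$ I would write $\beta_i=\mu/\alpha_i=\mu/(\lambda i-\mu)$, so that $-1/\beta_i=-(\lambda i-\mu)/\mu=-\frac{\lambda i}{\mu}+1$, giving
\[
f^{(i)}=(1-\alpha_i\beta_i h)^{-1/\beta_i}=(1-\mu h)^{-\frac{\lambda i}{\mu}+1}=(1-\mu h)\,(1-\mu h)^{-\frac{\lambda i}{\mu}}=(1-\mu h)\,Q(h)^i,
\]
using $\alpha_i\beta_i=\mu$ and the definition $Q(h)=(1-\mu h)^{-\lambda/\mu}$. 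In the case $\mu=0$ one has $\beta_i=0$ and $\alpha_i=\lambda i$, so Lemma~\ref{8} gives $f^{(i)}=e^{\alpha_i h}=e^{\lambda i h}=(e^{\lambda h})^i=Q(h)^i=(1-\mu h)Q(h)^i$ since $\mu=0$ makes the prefactor $1$. Thus in both cases $f^{(i)}=(1-\mu x)Q(x)^i$ for $i\in J'$.

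The final assembly is bookkeeping: substituting these identified series into the equation $x=\sum_{j\in J}B_j(f^{(j)}(x))$ and splitting the sum over $J=J'\sqcup J''$ yields exactly
\[
x=\sum_{j\in J'}B_j\bigl((1-\mu x)Q(x)^j\bigr)+\sum_{j\in J''}B_j(1),
\]
which is the claimed form. I expect the main obstacle to be purely expository rather than mathematical, namely carefully reconciling the exponent algebra in the $\mu\neq 0$ case so that the splitting $(1-\mu h)^{-\lambda i/\mu+1}=(1-\mu h)Q(h)^i$ is transparent, and ensuring the boundary condition $\lambda\neq 0$ is cleanly justified from the second lemma rather than merely assumed.
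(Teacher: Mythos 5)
Your proposal is correct and is essentially the paper's own proof: Proposition \ref{10} is obtained there exactly as you describe, by splitting $J$ into the indices with non-constant versus constant series and substituting the uniform parameters $\lambda,\mu$ produced by the second lemma into the closed form of Lemma \ref{8}. Your hesitation over $\lambda\neq 0$ points to a defect of the paper itself rather than of your argument: the paper's only justification is the phrase ``$\alpha_i+\mu=\lambda i\neq 0$ as $f_i$ is not constant'', which fails when $\beta_i=-1$ (then $f^{(i)}=1+\alpha_i h$ is non-constant yet forces $\lambda=0$), and accordingly the condition $\lambda\neq 0$ is dropped in Theorem \ref{12}, whose closing remark concedes that the case $\lambda=0$ does occur.
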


\subsection{Constant formal series in Dyson-Schwinger equations}

We first treat three particular cases.

\begin{lemma}\label{11}
\begin{enumerate}
\item Let us consider a Dyson-Schwinger equation of the form:
$$x=B_i(1)+B_j(f(x)),$$
with $f$ non constant. If it is Hopf, then there exists a non-zero $\alpha\in K$,
 such that $f(h)=1+\alpha h$ or $f(h)=\left(1-\alpha \frac{i}{j-i}h\right)^{\frac{i-j}{i}}$.
\item Let us consider a Dyson-Schwinger equation of the form:
$$x=B_i(1)+B_j(f(x))+B_k(g(x)),$$
with $f,g$ non constant. If it is Hopf, then there exists a non-zero $\alpha\in K$, such that ($f=(1-\alpha i h)^{-\frac{j}{i}+1}$ 
and $g=(1-\alpha i h)^{-\frac{k}{i}+1}$)or ($f=g=1+\alpha h$).
\item Let us consider a Dyson-Schwinger equation of the form:
$$x=B_i(1)+B_j(1)+B_k(f(x)),$$
where $f$ is non constant. Then there exists a non-zero $\alpha\in K$, such that $f=1+\alpha h$.
\end{enumerate}\end{lemma}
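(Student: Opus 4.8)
The plan is to run each non-constant series through Lemma~\ref{8} and then to extract the extra information carried by the constant operators by applying Proposition~\ref{6} with the cut performed \emph{at a constant decoration}. Throughout I use the basic structural fact that, since $f^{(i)}=1$, the operator $B_i$ only produces $B_i(1)$: every $i$-coloured vertex of a tree occurring in $x$ is a leaf, and $a_{B_i(F)}=0$ for any nonempty forest $F$ (because $a^{(i)}_n=0$ for $n\geq1$). This collapses most of the sums in Proposition~\ref{6}. For part (1), Lemma~\ref{8} applied to $f=f^{(j)}$ produces $\alpha=\alpha_j\neq0$ and $\beta=\beta_j$ with $f=(1-\alpha\beta h)^{-1/\beta}$, so the whole task is to pin down the value of $\beta$.

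I would first compute the structure constants $\lambda^{(i)}_n$ for cuts at the leaf $i$. Applying Proposition~\ref{6} to the $j$-ladder $B_j^n(1)$, whose coefficient $\alpha^{n-1}$ never vanishes, gives $\lambda^{(i)}_{nj}=\alpha(1+(1+\beta)(n-1))$ for \emph{every} $n\geq1$, with no hypothesis on $f$ (this is just the specialisation of Lemma~\ref{8} to the constant colour). Next I apply Proposition~\ref{6} to the corolla $B_j(B_i(1)^{\,p})$; only the graft on the root survives, and one reads off $\lambda^{(i)}_{j+pi}=\alpha(1+\beta p)$ whenever $a^{(j)}_p\neq0$. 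The first family lives in the residue class $0$ and the second runs through the classes $pi$ modulo $j$. Equating them at the first common degree $j+ij/g$, where $g=\gcd(i,j)$ (so $p=j/g$ and $n-1=i/g$), yields $\alpha(1+(1+\beta)(i/g))=\alpha(1+\beta(j/g))$, i.e. $(1+\beta)i=\beta j$, hence $\beta=i/(j-i)$; this is exactly $f=(1-\alpha\tfrac{i}{j-i}h)^{(i-j)/i}$.

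The delicate point, and the main obstacle, is that the corolla identity is only available where $a^{(j)}_p\neq0$. The value $\beta=i/(j-i)$ is never of the form $-1/p_0$, so it always corresponds to a genuine (non-polynomial) series, and the argument above applies. The remaining possibility is that $f$ is a polynomial, i.e. $\beta=-1/p_0$ with $a^{(j)}_p=0$ for $p>p_0$: if $p_0=1$ we land on $f=1+\alpha h$, the first case, and if $p_0\geq2$ I must derive a contradiction \emph{without} the high corollas. Here I would replace the corolla by a tree that carries several $i$-leaves but keeps every vertex-valence at most $p_0$, so that only the nonzero coefficients $a^{(j)}_1,\dots,a^{(j)}_{p_0}$ occur — for instance the ``caterpillar'' $u_n=B_j(B_i(1)\,u_{n-1})$, whose coefficient is a power of $2a^{(j)}_2\neq0$ and whose degree is $n(i+j)$. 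Cutting at $i$ on such a tree, the grafts that would raise a valence above $p_0$ vanish, and one obtains a value of $\lambda^{(i)}$ that, evaluated at an $n$ with $j\mid ni$ (so that the degree becomes a multiple of $j$), disagrees with the ladder value of Step~2. This rules out $p_0\geq2$ and completes part (1); organising this spreading construction so that it works for every admissible $i,j,p_0$ is the real work of the lemma.

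Parts (2) and (3) are then assembled from part (1). For (2), Lemma~\ref{8} together with the lemma following it ties the two non-constant series: there are global $\lambda,\mu$ with $\alpha_j=\lambda j-\mu$, $\alpha_k=\lambda k-\mu$ and $\alpha_j\beta_j=\alpha_k\beta_k=\mu$. Since the cut-at-$i$ relations only involve trees coloured by $\{i,j\}$ (resp. $\{i,k\}$), the third operator is invisible to them, so the mechanism of part (1) applies verbatim and gives $\beta_j\in\{-1,\,i/(j-i)\}$ and $\beta_k\in\{-1,\,i/(k-i)\}$; the identity $\alpha(1+\beta)=\lambda\cdot(\text{index})$ then shows $\beta_j=-1\Leftrightarrow\lambda=0\Leftrightarrow\beta_k=-1$, producing precisely the stated dichotomy (both $1+\alpha h$, with $\lambda=0$, or the two power series with $\mu=\alpha i$, $\lambda=\alpha$). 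For (3) I would simply invoke the conclusion of part (1) twice, for the pairs (constant $i$, non-constant $k$) and (constant $j$, non-constant $k$), obtaining $\beta_k\in\{-1,i/(k-i)\}\cap\{-1,j/(k-j)\}$. As $i/(k-i)=j/(k-j)$ forces $i=j$, the two power-type values are distinct, the intersection is $\{-1\}$, and $f=1+\alpha h$ is the only survivor.
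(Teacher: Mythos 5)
Your proof is correct, but it is organized differently from the paper's, and the comparison is instructive. For part (1) the paper makes no polynomial/non-polynomial dichotomy at all: it applies Proposition \ref{6} once and for all to the caterpillars $t_1=B_j(B_i(1))$, $t_{n+1}=B_j(B_i(1)\,t_n)$ --- exactly your $u_n$ --- whose coefficients $\alpha^{2n-1}(1+\beta)^{n-1}$ are nonzero for every $n$ as soon as $\beta\neq-1$, and obtains $\lambda^{(i)}_{n(i+j)}=\alpha\bigl(n(1+2\beta)-\beta\bigr)$ uniformly for all $\beta\neq -1$ (when $\beta=-1/2$ the grafts creating trivalent $j$-vertices contribute $0$, consistently with the factor $1+2\beta$, so no separate case is needed); comparing with the ladder value of Lemma \ref{8} at the single common degree $j(i+j)$ gives $(1+\beta)(i+j)=j(1+2\beta)$, i.e.\ $\beta=i/(j-i)$. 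Thus the computation you postpone as ``the real work'' of your $p_0\geq 2$ case is precisely the paper's main (and short) calculation; it goes through exactly as you sketch it, and it in fact renders your corolla step redundant, since it handles polynomial and non-polynomial $f$ alike --- though your corolla identity $\lambda^{(i)}_{j+pi}=\alpha(1+\beta p)$, valid whenever $a^{(j)}_p\neq 0$, is correct and settles the generic case more cheaply. For part (2), the paper restricts to pairs of decorations (the projection killing every forest with a vertex decorated outside the chosen pair is a Hopf algebra morphism, so each pair inherits a Hopf equation) and, for the pair $\{j,k\}$, invokes Proposition \ref{10} with $\mu\neq 0$ before matching exponents; your route --- apply the $\lambda,\mu$ lemma following Lemma \ref{8} to the three-colour system, then use $\alpha_j(1+\beta_j)=\lambda j$ and $\alpha_j\neq 0$ to get $\beta_j=-1\Leftrightarrow\lambda=0\Leftrightarrow\beta_k=-1$ --- is equally valid, avoids Proposition \ref{10}, and your ``invisibility'' justification (every tree occurring in the relevant instances of Proposition \ref{6} carries only the two colours in question) is exactly the mechanism the paper packages as restriction. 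Part (3) is the same in both treatments: the two power-type exponents can coincide only if $i=j$, so only the linear case survives.
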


\begin{proof} 1.  From lemma \ref{8}, there exists $\alpha=\alpha_j,\beta=\beta_j \in K$, such that for all $n\geq 1$:
$$\lambda_{nj}^{(i)}=\lambda_{nj}^{(j)}=\alpha(1+(1+\beta)(n-1))=\alpha(1+\beta)n-\alpha \beta.$$
Moreover, $f=(1-\alpha \beta h)^{-\frac{1}{\beta}}$ if $\beta$ is not equal to $0$ and $e^{\alpha h}$ if $\beta=0$. \\

We define inductively a family of trees by $t_1=\tddeux{$j$}{$i$}$ and $t_{n+1}=B_j (\tdun{$i$} t_n)$ for all $n\geq 1$.
For example, $t_2=\tdquatretrois{$j$}{$j$}{$i$}{$i$}$. For all $n\geq 1$, $t_n$ is a tree with $n$ vertices decorated by $i$ and $n$ vertices decorated by $j$.
Applying proposition \ref{6} to $t_n$, we obtain: 
$$\lambda_{n(i+j)}^{(i)}(1+\beta)^{n-1}=(n-1)(1+2\beta)(1+\beta)^{n-1}+(1+\beta)^n.$$
Let us assume that $\beta \neq -1$. Then $\lambda_{n(i+j)}^{(k)}=(n-1)(1+2\beta)+1+\beta=n(1+2\beta)-\beta$.
We now compute $\lambda_{j(i+j)}^{(k)}$ in two different ways:
\begin{eqnarray*}
\lambda_{j(i+j)}^{(i)}&=&\lambda_{(i+j)j}^{(i)}\\
&=&\alpha(1+\beta)(i+j)-\alpha\beta,\\
&=&\lambda_{j(i+j)}^{(i)}\\
&=&\alpha j(1+2\beta)-\alpha \beta.
\end{eqnarray*}
Hence, $(1+\beta)(i+j)=j(1+2\beta)$, so $\beta=\frac{i}{j-i}$.
As a conclusion, $\beta=-1$ or $\frac{i}{j-i}$, therefore $f(h)=1+\alpha h$ or $\left(1-\alpha \frac{i}{j-i}h\right)^{\frac{i-j}{i}}$. \\

2. Restricting to $i$ and $j$ (that is to say sending all the forests of $\h_{CK}^J$ with at least one vertex not decorated by $i$ or $j$ to zero), 
from the first point, $f=1+\alpha h$ or $\left(1-\alpha \frac{i}{j-i}h\right)^{\frac{i-j}{i}}$; 
restricting to $i$ and $k$, $g=1+\alpha' h$ or $\left(1-\alpha' \frac{i}{k-i}h\right)^{\frac{i-k}{i}}$.

Let us now restrict to $j$ and $k$, using proposition \ref{10}: $\mu$ cannot be equal to zero, so $f=(1-\mu h)^{-\lambda \frac{j}{\mu}+1}$ 
and $g=(1-\mu h)^{-\lambda \frac{k}{\mu}+1}$. Identifying the two expressions of $f$ and $g$, we obtain two possibilities:
\begin{itemize}
\item $-\lambda \frac{j}{\mu}+1=1$ or  $-\lambda \frac{k}{\mu}+1=1$. Then $\lambda=0$ and $f=g=1-\mu h$.
\item $-\lambda \frac{j}{\mu}+1=\frac{i-j}{i}$ and $-\lambda \frac{k}{\mu} +1=\frac{i-k}{i}$. As $j\neq k$, 
this implies that $\frac{\lambda}{\mu}=1/i$. So $\mu=\lambda i$, $f=(1-\lambda i h)^{-\frac{j}{i}+1}$ and $g=(1-\lambda ih)^{-\frac{k}{i}+1}$.
\end{itemize}

3. Let us restrict to $i$ and $k$. From the first point, $f(h)=1+\alpha h$ or $f(h)=\left(1-\alpha \frac{i}{k-i}h\right)^{\frac{i-k}{i}}$.
We then restrict to $j$ and $k$ and we obtain $f(h)=1+\alpha h$ or $f(h)=\left(1-\alpha \frac{j}{k-j}h\right)^{\frac{j-k}{j}}$.
As $\frac{i-k}{i}\neq \frac{j-k}{j}$, necessarily $f=1+\alpha h$. \end{proof}

\begin{theo}\label{12}
Let $(E)$ be a Hopf Dyson-Schwinger equation of the form:
$$x=\sum_{j\in J} B_j\left(f^{(j)}(x)\right),$$
where $J\subseteq \mathbb{N}^*$ and $f^{(j)}(0)=1$ for all $j\in J$. Then one of the following assertions holds:
\begin{enumerate}
\item  there exists $\lambda, \mu\in K$ such that, if we put:
$$Q(h)=\left\{\begin{array}{l}
(1-\mu h)^{-\frac{\lambda}{\mu}} \mbox{ if }\mu \neq 0,\\
e^{\lambda h} \mbox{ if }\mu=0,
\end{array}\right.$$
then:
$$(E): x=\sum_{j\in J} B_j\left((1-\mu x)Q(x)^j\right).$$
\item There exists $m\geq 0$ and $\alpha \in K-\{0\}$ such that:
$$(E):x=\sum_{\substack{j\in J\\ m\mid j}}B_j(1+\alpha x)+
\sum_{\substack{j\in J\\ m/ \hspace{-1.2mm} \mid \hspace{1.2mm} j}}B_j(1)$$
\end{enumerate}
\end{theo}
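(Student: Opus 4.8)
The plan is to classify the equation according to the nature of its series. Write $J=J'\sqcup J''$ where $J''=\{j\in J\mid f^{(j)}=1\}$ is the set of constant series (a constant series equals $1$ because $f^{(j)}(0)=1$) and $J'$ the non-constant ones. If $J'=\emptyset$ then every $f^{(j)}=1$ and we are in the second alternative with $m=0$. Otherwise I would distinguish two cases: either some non-constant $f^{(j_0)}$ is \emph{not} affine, which I claim leads to the first alternative, or every non-constant series is affine, which leads to the second.

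Assume some $f^{(j_0)}$, $j_0\in J'$, is not of the form $1+\alpha x$. Since $J'\neq\emptyset$, Proposition~\ref{10} furnishes $\lambda,\mu$ and the series $Q$ with $f^{(j)}=(1-\mu x)Q(x)^j$ for all $j\in J'$ and $f^{(j)}=1$ for $j\in J''$. To reach the first alternative it then suffices to absorb the constant indices, i.e.\ to prove $(1-\mu x)Q(x)^i=1$ for every $i\in J''$. Fixing such an $i$, I would restrict the equation to the two decorations $\{i,j_0\}$: the projection $\h_{CK}^J\to\h_{CK}^{\{i,j_0\}}$ killing every forest with another decoration is a Hopf morphism carrying $x$ to the solution of $x=B_i(1)+B_{j_0}(f^{(j_0)}(x))$, so this restricted equation is again Hopf and Lemma~\ref{11}(1) applies. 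Because $f^{(j_0)}$ is non-affine it must be the power solution $\left(1-\alpha\frac{i}{j_0-i}x\right)^{\frac{i-j_0}{i}}$, which is algebraic; hence $\mu\neq0$ (else $Q(x)=e^{\lambda x}$ would make $f^{(j_0)}=e^{\lambda j_0 x}$ transcendental), and comparing the exponent with that of $(1-\mu x)Q(x)^{j_0}=(1-\mu x)^{1-\lambda j_0/\mu}$ forces $\mu=\lambda i$, whence $(1-\mu x)Q(x)^i=(1-\mu x)^{1-\lambda i/\mu}=1$. Two distinct constant indices would give $\lambda i_1=\mu=\lambda i_2$ with $\lambda\neq0$, which is impossible, so $|J''|\le1$; in all cases every constant index is absorbed and $f^{(j)}=(1-\mu x)Q(x)^j$ for all $j\in J$, the first alternative.

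Assume now every non-constant series is affine. For $i\in J'$ write $f^{(i)}=1+\alpha_i x$; since $a_2^{(i)}=0$, Lemma~\ref{8} gives $\beta_i=-1$ and $\alpha_i=a_1^{(i)}\neq0$, and the lemma following Lemma~\ref{8} shows $\alpha_i\beta_i$ is independent of $i\in J'$, so all the $\alpha_i$ equal one nonzero $\alpha$ and $f^{(i)}=1+\alpha x$. As each operator $B_j$ now grafts at most one subtree (its series has degree $\le1$ in $x$), every tree in the solution is a ladder, and an easy induction yields $a_t=\alpha^{k-1}$ for a valid ladder with $k$ vertices. I would then apply Proposition~\ref{6} to ladders: the only $t'$ with $n_j(t,t')\neq0$ and $a_{t'}\neq0$ is obtained by stacking a $j$-vertex on top of $t$, and this $t'$ is a valid ladder exactly when the top of $t$ lies in $J'$. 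Hence $\lambda_n^{(j)}=\alpha$ whenever a degree-$n$ ladder has its top in $J'$ and $\lambda_n^{(j)}=0$ whenever a degree-$n$ ladder has its top in $J''$. As $\alpha\neq0$ and $\lambda_n^{(j)}$ is single-valued, no degree carries both kinds of ladder; writing $\langle J'\rangle$ for the set of sums of one or more elements of $J'$ and $\langle J'\rangle_{\ge0}=\langle J'\rangle\cup\{0\}$, this reads $\langle J'\rangle\cap\bigl(J''+\langle J'\rangle_{\ge0}\bigr)=\emptyset$.

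To finish, put $m=\gcd(J')$. Every element of $J'$ is divisible by $m$, and by the Sylvester--Frobenius property $\langle J'\rangle$ contains all large enough multiples of $m$. If some $l\in J''$ satisfied $m\mid l$, then $l+Nm$ would lie in $\langle J'\rangle$ for large $N$ while also lying in $J''+\langle J'\rangle_{\ge0}$, contradicting the disjointness just obtained; hence $m\nmid l$ for every $l\in J''$. Therefore, for $j\in J$, one has $j\in J'$ iff $m\mid j$, which is precisely the second alternative. The heart of the argument---and the step I expect to be hardest---is this affine branch: recognising that the relevant trees are ladders, extracting from Proposition~\ref{6} the two possible values of $\lambda_n^{(j)}$, and turning their incompatibility into the divisibility condition through a numerical-semigroup argument. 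The non-affine branch is comparatively short, resting directly on Proposition~\ref{10} and Lemma~\ref{11}.
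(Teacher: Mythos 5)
Your proof is correct, and although it shares the paper's broad strategy (Proposition~\ref{10} for the non-constant series, restriction to a sub-alphabet of decorations, reduction of the affine case to ladders, arithmetic extraction of $m$), its execution differs genuinely at three points. First, the case split: the paper splits on the size of $J''$ and needs parts (2) and (3) of Lemma~\ref{11} --- part (3) to force all series affine when $|J''|\geq 2$, part (2) to separate the two alternatives when $|J''|=1$ --- whereas you split on whether some non-constant series fails to be affine, and then absorb the constant indices one at a time using only Lemma~\ref{11}(1) together with the identification of the parameters of a series $(1-ah)^b$ (legitimate here since the exponent is nonzero); this makes Lemma~\ref{11}(2)--(3) superfluous and shows $|J''|\leq 1$ in the non-affine branch as a by-product. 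Second, in the affine branch you obtain the common value of $\alpha$ from Lemma~\ref{8} and the lemma that follows it ($\beta_i=-1$ and $\alpha_i\beta_i$ independent of $i\in J'$), where the paper restricts again and invokes Lemma~\ref{11}(2); your route has the small advantage of working even when $J''=\emptyset$. Third, the degree-separation between ladders topped in $J'$ and ladders topped in $J''$: the paper computes $\Delta(x)$ explicitly, obtaining $\Delta(x)=x\otimes 1+1\otimes x+\alpha x\otimes x'$, which gives the criterion ``for all $k$, $x'(k)=0$ or $x''(k)=0$'' as an equivalence and is reused later to prove sufficiency (Proposition~\ref{21}); you instead read the same dichotomy off Proposition~\ref{6} applied to ladders, where $\lambda_n^{(j)}$ would have to equal both $\alpha$ and $0$ --- this yields only the necessary condition, which is exactly what the theorem as stated requires, but nothing reusable for the converse. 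Finally, your gcd/Sylvester--Frobenius argument and the paper's subgroup-of-$\mathbb{Z}$ argument (writing $j$ as an integer combination of elements of $J'$ and moving negative terms across the equality) are equivalent devices for producing a common degree carrying both kinds of ladders; just note that for infinite $J'$ the gcd is attained on a finite subset, so the semigroup theorem applies.
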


\begin{proof} From proposition \ref{10}, we can write:
 $$(E): x=\sum_{j\in J'} B_j\left((1-\mu x)Q(x)^i\right)+\sum_{j\in J''} B_j(1).$$
 If $J''=\emptyset$, we obtain the first case. Let us assume that $J'' \neq \emptyset$.
 If it contains at least two elements $i$ and $j$, then restricting to $i,j$ and any $k\in J'$ we deduce from the third point of lemma \ref{11}
 that $f^{(k)}=1+\alpha_k h$ for any $k\in J'$, where $\alpha_k\in K$. Restricting then to $i$ and any $k,l \in J'$, the second point of lemma \ref{11}
 implies that $\alpha_k=\alpha_l$. So we are reduced to:
 \begin{equation}
\label{E1}(E):x=\sum_{j\in J'} B_j(1+\alpha x)+\sum_{j\in J''} B_j(1).
\end{equation}
 If $J'$ contains a unique element $i$, restricting to $i$ and $j,k\in J'=J-\{i\}$, the second point of lemma \ref{11} implies that there are two possibilities:
 \begin{itemize}
\item  First case:
$$(E):x=\sum_{j\in J-\{i\}}B_j\left((1-\alpha i x)^{-\frac{j}{i}+1}\right)+B_i(1).$$
 Noticing that $-j/i+1=0$ if $j=i$, this is the first case, with $\mu=\alpha i=\lambda i$.
 \item Second case:
$$(E)=\sum_{j\in J-\{i\}}B_j(1+\alpha x)+B_i(1).$$
 This is an equation of the form (\ref{E1}).
\end{itemize}
It remains now to consider the case of an equation of the form (\ref{E1}).  Then:
$$x=\underbrace{\sum_{k=1}^{\infty}\sum_{i_1,\ldots,i_k\in J'} \alpha^{k-1}B_{i_1}\circ \ldots \circ B_{i_k}(1)}_{x'}
+\underbrace{\sum_{k=1}^{\infty}\sum_{i_1,\ldots,i_{k-1}\in J',i_k\in J''} \alpha^{k-1}B_{i_1}\circ \ldots \circ B_{i_k}(1)}_{x''}.$$
It is not difficult to see that $\Delta(x')=x'\otimes 1+1\otimes x'+\alpha x' \otimes x'$ and $\Delta(x'')=x''\otimes 1+1\otimes x''+\alpha x'' \otimes x'$, so:
$$\Delta(x)=x\otimes 1+1\otimes x+\alpha x \otimes x'.$$
 Finally, taking the homogeneous component of degree $n$:
$$\Delta(x(n))=x(n)\otimes 1+1\otimes x(n)+\sum_{k=1}^{n-1}\alpha x(n-k) \otimes x'(k).$$
As a consequence, the equation $(E)$ is Hopf, if and only if, for all $k \geq 1$, $x'(k)$ is colinear to $x(k)$.
As $x(k)=x'(k)+x''(k)$ and the ladders which may appear in $x'$ and $x''$ are different, this is equivalent to the following assertion: 
for all $k\geq 1$, $x'(k)=0$  or $x''(k)=0$.

Let us consider the subgroup of $(\mathbb{Z},+)$ generated by the elements of $J'$. it is equal to $m\mathbb{Z}$ for a well-chosen $m\geq 0$.
All the elements of $J'$ are multiples of $m$. Let us assume that there exists $j \in J''$, such that $m \mid j$. By definition of $m$, there exists $j_1,\ldots,j_n \in J'$, 
$\lambda_1,\ldots,\lambda_n\in \mathbb{Z}$, such that $\lambda_1j_1+\ldots+\lambda_nj_n=j$. Writing this equality in a different way,
there exists $i_1,\ldots,i_k,i'_1,\ldots,i'_l \in J'$, $\mu_1,\ldots,\mu_k,\mu'_1,\ldots,\mu'_l>0$, such that
$\mu_1i_1+\ldots+\mu_ki_k=\mu'_1i'_1+\ldots+\mu'_li_l+j$.  So any ladder $l'$ with $\mu_1$ vertices decorated by $i_1$, $\ldots$, $\mu_k$ vertices 
decorated by $i_k$  and any ladder $l''$ with $\mu'_1$ vertices decorated by $i'_1$, $\ldots$, $\mu'_l$ vertices decorated by $i'_l$ 
and its leaf decorated by $j$ have the same degree $m$. So $x'(m)$ and $x''(m)$ are both non zero, and $(E)$ is not Hopf. 
Hence, $m$ divides no $j\in J''$. Finally, $(E)$ can be written as in the second case. \end{proof}\\

{\bf Remarks.} \begin{enumerate}
\item In the first case, for any values of $\lambda$ and $\mu$:
$$\left\{\begin{array}{rcl}
Q(h)&=&\displaystyle \sum_{n=0}^\infty \frac{\lambda (\lambda+\mu)\ldots(\lambda+(n-1)\mu)}{n!}h^n,\\[6mm]
f^{(j)}(x)&=&\displaystyle \sum_{n=0}^\infty \frac{(\lambda j -\mu)(\lambda j)(\lambda j+\mu)\ldots(\lambda j+\mu(n-2))}{n!}h^n.
\end{array}\right.$$
\item In the second case, if $m$ divides $n$, then $x''(n)=0$; if $m$ does not divide $n$, then $x'(n)=0$.
From the preceding proof, the second case gives indeed a Hopf subalgebra. Moreover, for all $n$:
\begin{equation}
\label{E2}\Delta(x(n))=x(n)\otimes 1+1\otimes x(n)+\alpha \sum_{\substack{k=1\\m\mid k}\\ }^{n-1} x(n-k) \otimes x(k).
\end{equation}
\item The first and second cases are not disjoint. A first case with $\lambda=0$ is also a second case with $m=1$.
\end{enumerate}

\section{Pre-Lie structures associated to Hopf Dyson-Schwinger equations}

\label{s4}  We now prove that the equations of theorem \ref{12} are Hopf.

\subsection{The Faà di Bruno pre-Lie algebra}

Let $\g_{FdB}=Vect(e_i\:\mid \: i\geq 1)$ and let $\lambda,\mu \in K$. One defines a pre-Lie product on $\g_{FdB}$ 
by $e_i \circ e_j=(\lambda j-\mu)e_{i+j}$.  It is graded, $e_i$ being homogeneous of degree $i$ for all $i\geq 1$.\\

{\bf Remarks.} \begin{enumerate}
\item The associated Lie bracket is given by $[e_i,e_j]=\lambda(j-i)e_{i+j}$, so does not depend of $\mu$. 
\item If $\lambda=1$ and $\mu=-1$, this pre-Lie algebra is precisely the dual Lie algebra of $\h_{FdB}$.
If $\lambda\neq 0$ and $\mu=-\lambda$, these two pre-Lie algebras are isomorphic; if $\lambda \neq 0$, they are isomorphic as Lie algebras only. In this case,
the graded dual of the enveloping algebra of $\g_{FdB}$ is isomorphic to the Faà di Bruno Hopf algebra $\h_{FdB}$. 
\end{enumerate}
 
We shall use the following result:

\begin{theo} \cite{Schedler,Oudom}
Let $(\g,\circ)$ a pre-Lie algebra. Let $S_+(\g)$ the augmentation ideal of $S(\g)$.
One can extend the product $\circ$ to $S_+(\g)$ in the following way: if $a,b,c \in S_+(\g)$, $x\in \g$,
$$\left\{\begin{array}{rcl}
a\circ 1&=&\varepsilon(a),\\
1\circ b&=&b,\\
(xa)\circ b &=&x \circ (a \circ b)-(x \circ a)\circ b,\\
a \circ (bc)&=&\sum (a' \circ b)(a'' \circ c).\end{array}\right.$$
We define a product on $S_+(\g)$ by $a\star b=\sum a' (a'' \circ b)$, with the Sweedler notation $\Delta(a)=\sum a' \otimes a''$.
This product is extended to $S(\g)$, making $1$ the unit of $\star$.
With its usual coproduct, $S(\g)$ is a Hopf algebra, isomorphic to $\U(\g)$ via the isomorphism:
$$\Phi_\g:\left\{\begin{array}{rcl}
\U(\g)&\longrightarrow&(S(\g),\star)\\
v\in \g&\longrightarrow&v.
\end{array}\right.$$
\end{theo}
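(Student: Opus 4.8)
The plan is to carry out the Guin--Oudom construction in four stages: well-definedness of the extended product, the two structural identities that upgrade $(S(\g),\star,\Delta)$ to a Hopf algebra, and finally the comparison with $\U(\g)$ through a filtration argument.

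First I would settle that the four relations really do define $\circ$ on $S_+(\g)\otimes S(\g)$. The last relation, read through the coproduct $\Delta(a)=\sum a'\otimes a''$, reduces the computation of $a\circ(b_1\cdots b_m)$ to the values $a\circ z$ with $z\in\g$; the third relation then reduces $a\circ z$, by induction on the polynomial degree of $a$, to the original pre-Lie product on $\g$. What has to be checked is consistency: the result must not depend on the order in which the generators of $a$ are stripped off, nor on how $b$ is factored. Writing $L_x:=x\circ-$, the second and fourth relations force each $L_x$ to be the derivation of $S(\g)$ extending $y\mapsto x\circ y$, and the symmetry $(xy\,a)\circ z=(yx\,a)\circ z$ unwinds, using that the $L_x$ are derivations, to exactly the identity $(x\circ y)\circ z-x\circ(y\circ z)=(y\circ x)\circ z-y\circ(x\circ z)$. \textbf{This is the main obstacle}: organizing the bookkeeping so that the consistency condition collapses cleanly onto the pre-Lie axiom (equivalently, onto $[L_x,L_y]=L_{[x,y]}$ as derivations, which one checks on generators). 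Once this is done $\circ$ is well defined, and $a\circ 1=\varepsilon(a)$ holds on $S_+(\g)$.

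Next I would prove, by induction on polynomial degree, the two identities that make the construction work. The first is the module identity $(a\star b)\circ c=a\circ(b\circ c)$; for $a,b,c\in\g$ it is a one-line consequence of $a\star b=ab+a\circ b$ together with the third relation, and the general case follows by induction. This identity is exactly what forces $\star$ to be associative. The second is that $\circ$ is a coalgebra morphism, $\Delta(a\circ b)=\sum(a'\circ b')\otimes(a''\circ b'')$, again verified on generators and propagated by the derivation/coproduct compatibilities. From it, together with the multiplicativity of $\Delta$ for the commutative product, one obtains $\Delta(a\star b)=\sum(a'\star b')\otimes(a''\star b'')$, so $(S(\g),\star,\Delta)$ is a bialgebra. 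Since $\circ$ and the commutative product are homogeneous for the grading in which $e_i$ has degree $i$ (note $e_i\circ e_j=(\lambda j-\mu)e_{i+j}$ is homogeneous), $\star$ is graded and the bialgebra is graded connected, hence automatically a Hopf algebra with $1$ as unit.

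Finally I would identify it with $\U(\g)$. On generators $v,w\in\g$ one has $v\star w=vw+v\circ w$, whence $v\star w-w\star v=v\circ w-w\circ v=[v,w]_{\g}$; by the universal property of $\U(\g)$ the assignment $v\mapsto v$ extends to an algebra morphism $\Phi_\g:\U(\g)\to(S(\g),\star)$, which is moreover a coalgebra morphism since it sends the primitives $\g$ into the primitives $\g\subset S(\g)$. To see that $\Phi_\g$ is bijective I would use the polynomial-degree filtration: expanding $a\star b=\sum a'(a''\circ b)$ and isolating the term $a''=1$ gives $a\star b=ab+(\text{strictly lower polynomial degree})$, because $a''\circ b$ never increases the number of factors of $b$. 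Hence the associated graded of $(S(\g),\star)$ for this filtration is the commutative algebra $S(\g)$, the PBW filtration of $\U(\g)$ has associated graded $S(\g)$ as well, and $\mathrm{gr}\,\Phi_\g$ is the identity of $S(\g)$. A filtered morphism inducing an isomorphism on associated graded is itself an isomorphism, so $\Phi_\g$ is a Hopf algebra isomorphism, as claimed.
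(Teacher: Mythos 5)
The paper itself gives no proof of this theorem --- it is quoted from the cited literature (Oudom--Guin, Schedler) --- so your attempt can only be measured against the standard Guin--Oudom argument, which is indeed the route you outline: well-definedness of the extended $\circ$ resting on the pre-Lie identity, the module identity $(a\star b)\circ c=a\circ(b\circ c)$ plus compatibility of $\circ$ with $\Delta$ to get a bialgebra, and identification with $\U(\g)$ by a filtration argument. Most of this is correctly conceived: the derivation property of $L_x$, the one-line verification of the module identity from the third relation, the expansion $a\star b=ab+(\text{lower polynomial degree})$, and the passage to associated graded against the PBW filtration are all sound.

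One step, however, would fail as written. To produce the antipode you argue that the bialgebra is graded connected ``for the grading in which $e_i$ has degree $i$ (note $e_i\circ e_j=(\lambda j-\mu)e_{i+j}$ is homogeneous)''. That basis and that formula belong to the Fa\`a di Bruno pre-Lie algebra $\g_{FdB}$ of the paper, not to the arbitrary pre-Lie algebra $\g$ of the theorem: a general pre-Lie algebra carries no grading at all, so graded connectedness is simply not available. The conclusion is easily rescued --- either observe that $S(\g)$ with its usual coproduct is connected for the filtration by polynomial degree, which $\star$ respects (since $a''\circ b$ has the same polynomial degree as $b$), and every connected filtered bialgebra has an antipode; or note that once $\Phi_\g$ is proved to be a bialgebra isomorphism, the antipode of $\U(\g)$ transports across it --- but as written the step invokes structure that does not exist in the stated generality. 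Separately, what you rightly flag as the main obstacle, namely the consistency of the recursive definition of $\circ$ (independence of the order in which the factors of $a$ are stripped off, equivalently $[L_x,L_y]=L_{[x,y]}$ acting by derivations, which is exactly the pre-Lie axiom), is only described and not executed; that induction is where the real content of the theorem lies, and a complete write-up would have to carry it out in detail.
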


Let us start by $\g_{CK}^J$. Here, a basis of $S(\g_{CK}^J)$ is given by the set of rooted forests decorated by $J$.

\begin{prop}
Let $F=t_1\ldots t_n , G$ be two decorated forests. Then:
$$F \circ G=\sum_{s_1,\ldots,s_n \in G} \mbox{graftings of $t_1$ over $s_1$,$\ldots $, $t_n$ over $s_n$}.$$
\end{prop}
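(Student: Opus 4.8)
The plan is to argue by induction on $n$, the number of trees in the forest $F$, using the four rules defining the extension of $\circ$ to $S(\g_{CK}^J)$ stated in the theorem of Schedler--Oudom recalled above. Throughout, "grafting a tree $t$ over a vertex $s$ of a forest'' means attaching the root of $t$ to $s$ by a new edge; for a single tree $t$ and a single tree $t'$ I freely use that the defining pre-Lie product $t\circ t'$ is the sum of the graftings of $t$ over the vertices of $t'$.

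First I would settle the case $n=1$: for a single tree $t$ and an arbitrary forest $G=u_1\cdots u_m$, I claim
$$t\circ G=\sum_{s\in G}(\mbox{grafting of $t$ over $s$}),$$
the sum running over all vertices $s$ of $G$. Since $t\in\g_{CK}^J$ is primitive in $S(\g_{CK}^J)$, the rule $a\circ(bc)=\sum(a'\circ b)(a''\circ c)$ specializes to the derivation identity $t\circ(bc)=(t\circ b)c+b(t\circ c)$. An induction on the number $m$ of trees of $G$, with base case the defining product $t\circ u_1$, then produces exactly the grafting of $t$ over each vertex of $G$ in turn.

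For the inductive step I would write $F=t_1\,a$ with $a=t_2\cdots t_n$ and apply the rule $(t_1a)\circ G=t_1\circ(a\circ G)-(t_1\circ a)\circ G$. By the induction hypothesis, $a\circ G$ is the sum over tuples $(s_2,\dots,s_n)$ of vertices of $G$ of the forest $G_{s_2,\dots,s_n}$ obtained by grafting $t_2,\dots,t_n$ over $s_2,\dots,s_n$. Applying the case $n=1$ to each $G_{s_2,\dots,s_n}$, the term $t_1\circ(a\circ G)$ becomes the sum of graftings of $t_1$ over all vertices of $G_{s_2,\dots,s_n}$; these vertices are of two kinds: the vertices of the original $G$, which produce exactly the terms claimed by the proposition, and the vertices lying inside the grafted copies of $t_2,\dots,t_n$. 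The second term $(t_1\circ a)\circ G$ is designed to cancel this second family: by the case $n=1$, $t_1\circ a=\sum_j t_2\cdots(t_1\circ t_j)\cdots t_n$, where $t_1\circ t_j$ is $t_1$ grafted over a vertex of $t_j$, and applying the induction hypothesis for $n-1$ trees grafts each of these $n-1$ trees over the vertices of $G$. Grafting the enlarged tree $t_1\circ t_j$ over a vertex of $G$ is the same rooted forest as first grafting $t_j$ over that vertex of $G$ and then grafting $t_1$ onto a vertex of the grafted $t_j$, so $(t_1\circ a)\circ G$ reproduces precisely the second family above.

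Subtracting, every configuration in which $t_1$ lands on a grafted $t_j$ cancels, and only the terms in which $t_1,\dots,t_n$ are all grafted directly over vertices of $G$ survive, which is the asserted formula. The main obstacle is the bookkeeping in this cancellation: one must check that each forest in which $t_1$ sits above some $t_j$ occurs with the same multiplicity in $t_1\circ(a\circ G)$ and in $(t_1\circ a)\circ G$. This amounts to the associativity of the two successive graftings (of $t_1$ onto $t_j$ and of $t_j$ onto $G$) together with a careful matching of grafting positions; the ordered nature of the sum over $(s_1,\dots,s_n)$ makes this matching exact even when some of the $t_i$ coincide.
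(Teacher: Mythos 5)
Your proof is correct and follows essentially the same route as the paper's: the same induction on $n$ with the base case $n=1$ handled by a sub-induction on the number of trees of $G$ via the derivation property of $t\circ(-)$, the same use of the rule $(t_1a)\circ G=t_1\circ(a\circ G)-(t_1\circ a)\circ G$, and the same cancellation of the configurations where $t_1$ lands on a grafted copy of some $t_j$. If anything, you make the multiplicity bookkeeping in that cancellation more explicit than the paper, which simply indexes the surviving graftings by $s\in G$ after subtraction.
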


\begin{proof} By induction on $n$. Let us start with $n=1$. We put $G=s_1\ldots s_m$ and we proceed inductively on $m$.
If $m=1$, it is the definition of $\circ$ on $\g_{CK}^J$. Let us assume the result at rank $m-1$. We put $G'=s_1\ldots s_{m-1}$.
Then:
\begin{eqnarray*}
t_1 \circ G&=&t_1 \circ (G's_m)\\
&=&(t_1 \circ G')s_m+G'(t_1 \circ s_m)\\
&=&\sum_{s\in G'}(\mbox{grafting of $t_1$ over $s$})s_m+\sum_{s \in s_m}G'(\mbox{grafting of $t_1$ over $s$})\\
&=&\sum_{s\in G} \mbox{grafting of $t_1$ over $s$}.
\end{eqnarray*}
So the result is true at rank $1$. Let us assume it at rank $n-1$. We put $F'=t_2\ldots t_n$. Then:
\begin{eqnarray*}
F \circ G&=&t_1 \circ (F' \circ G)-(t_1 \circ F)\circ G\\
&=&\sum_{s_2,\ldots,s_n \in G} \sum_{s \in F' \cup G}\mbox{grafting of $t_2$ over $s_2$,$\ldots$, $t_n$ over $s_n$, $t_1$ over $s$}\\
&&-\sum_{s_2,\ldots,s_n \in G} \sum_{s \in F'}\mbox{grafting of $t_2$ over $s_2$,$\ldots$, $t_n$ over $s_n$, $t_1$ over $s$}\\
&=&\sum_{s_2,\ldots,s_n \in G} \sum_{s \in G}\mbox{grafting of $t_2$ over $s_2$,$\ldots$, $t_n$ over $s_n$, $t_1$ over $s$}\\
&=&\sum_{s_1,\ldots,s_n \in G}\mbox{grafting of $t_1$ over $s_1$,$\ldots$, $t_n$ over $s_n$}.
\end{eqnarray*}
So the result is true for all $n$. \end{proof}

\begin{cor}
If $F=t_1\ldots t_m$ and $G$ are decorated forests, then:
$$F \star G=\sum_{k=0}^m \sum_{1\leq i_1<\ldots<i_k \leq m} \sum_{s_1,\ldots,s_m \in G}
(\mbox{grafting of $t_1$ over $s_1$, $\ldots $, $t_k$ over $s_k$}) \prod_{i\neq i_1,\ldots,i_k} t_i.$$
\end{cor}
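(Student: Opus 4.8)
The plan is to unfold the definition of $\star$ and reduce everything to the preceding proposition. By definition $F\star G=\sum F'(F''\circ G)$, where $\Delta$ is the coproduct of $S(\g_{CK}^J)$. Since in the symmetric Hopf algebra $S(\g_{CK}^J)$ every generator $t_i$ is primitive, the coproduct of the forest $F=t_1\ldots t_m$ factorizes, and I would first record the explicit expansion
$$\Delta(F)=\prod_{i=1}^m(t_i\otimes 1+1\otimes t_i)=\sum_{S\subseteq\{1,\ldots,m\}}\Big(\prod_{i\in S}t_i\Big)\otimes\Big(\prod_{i\notin S}t_i\Big).$$
This rewrites $\Delta(F)$ as a sum over subsets $S$ of $\{1,\ldots,m\}$ and turns $F\star G$ into a corresponding sum.

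Next I would substitute this into $a\star b=\sum a'(a''\circ b)$, obtaining
$$F\star G=\sum_{S\subseteq\{1,\ldots,m\}}\Big(\prod_{i\in S}t_i\Big)\,\Big(\big(\prod_{i\notin S}t_i\big)\circ G\Big).$$
The key point is to keep the two Sweedler legs in their correct roles: the left leg $\prod_{i\in S}t_i$ survives as an inert forest factor, while the right leg $\prod_{i\notin S}t_i$ is exactly the forest that gets grafted into $G$. Applying the preceding proposition to each factor $\big(\prod_{i\notin S}t_i\big)\circ G$, and writing the complement as $S^c=\{i_1<\ldots<i_k\}$ with $k=|S^c|$, I get
$$\Big(\prod_{i\notin S}t_i\Big)\circ G=\sum_{s_1,\ldots,s_k\in G}\mbox{graftings of $t_{i_1}$ over $s_1$, $\ldots$, $t_{i_k}$ over $s_k$},$$
the surviving factor being precisely $\prod_{i\in S}t_i=\prod_{i\neq i_1,\ldots,i_k}t_i$.

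Finally I would reorganize the sum over subsets $S$ as a double sum, first over the size $k=|S^c|$ of the grafted part and then over the increasing index tuples $1\le i_1<\ldots<i_k\le m$; collecting the pieces yields the stated formula. The whole argument is essentially bookkeeping, so I expect no genuine obstacle; the only points demanding care are the asymmetric roles of the two legs in $a\star b=\sum a'(a''\circ b)$ and the reindexing from subsets to increasing tuples, which is exactly where a mismatched leg or an off-by-one in the running indices $s_1,\ldots,s_k$ could slip in.
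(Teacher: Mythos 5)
Your proof is correct and is precisely the argument the paper leaves implicit: the corollary follows from the preceding proposition by expanding $\Delta(F)=\prod_i(t_i\otimes 1+1\otimes t_i)$ in the symmetric-algebra coproduct (trees being primitive) and substituting into $a\star b=\sum a'(a''\circ b)$. Your write-up even corrects the paper's slight notational sloppiness (the graftings should read $t_{i_1},\ldots,t_{i_k}$ and the inner sum should run over $s_1,\ldots,s_k\in G$), and since this coproduct is cocommutative the worry about which Sweedler leg gets grafted is in fact harmless.
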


This Hopf algebra is known as the Grossman-Larson Hopf algebra \cite{Grossman1,Grossman2}.  
Extending the pairing between trees and forests defined in section \ref{s1}, it is isomorphic to the graded dual of $\h_{CK}^J$, via the pairing:
$$\langle-,-\rangle: \left\{\begin{array}{rcl}
S(\g_{CK}^J)\otimes \h_{CK}^J&\longrightarrow&K\\
(F,G)&\longrightarrow&s_F \delta_{F,G},
\end{array}\right.$$
where $F,G$ are two forests and $s_F$ is the number of symmetries of $F$ \cite{Hoffman,Panaite}. \\

Let us now consider the Faà di Bruno pre-Lie algebra.

\begin{prop}
In $S(\g_{FdB})$:
$$(e_{i_1}\ldots e_{i_m})\circ e_j=(\lambda j-\mu)(\lambda j) (\lambda j+\mu)\ldots (\lambda j+(m-2)\mu) e_{i_1+\ldots+i_m+j}.$$
\end{prop}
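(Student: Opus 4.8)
The plan is to prove the formula by induction on the number $m$ of factors of the forest $e_{i_1}\ldots e_{i_m}$, using the recursive rules for $\circ$ provided by the Oudom--Guin theorem quoted above. Throughout I abbreviate $N=i_1+\ldots+i_m$ and write the target scalar as
$$P_m(j)=(\lambda j-\mu)(\lambda j)(\lambda j+\mu)\cdots(\lambda j+(m-2)\mu),$$
so that $P_m(j)=P_{m-1}(j)\,(\lambda j+(m-2)\mu)$. The base case $m=1$ is exactly the definition $e_{i_1}\circ e_j=(\lambda j-\mu)e_{i_1+j}$, since $P_1(j)=\lambda j-\mu$ (the product runs over the single exponent $-1$).

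For the inductive step I would split off the first generator, writing the forest as $e_{i_1}a$ with $a=e_{i_2}\ldots e_{i_m}$ (a forest with $m-1$ factors), and apply the rule $(xa)\circ b=x\circ(a\circ b)-(x\circ a)\circ b$ with $x=e_{i_1}$ and $b=e_j$. The first term is handled by the induction hypothesis $a\circ e_j=P_{m-1}(j)\,e_{i_2+\ldots+i_m+j}$ followed by one more application of the pre-Lie product, giving $P_{m-1}(j)\bigl(\lambda(\sigma+j)-\mu\bigr)e_{N+j}$ where $\sigma=i_2+\ldots+i_m=N-i_1$. For the second term I first compute $e_{i_1}\circ a$: since $e_{i_1}$ is primitive in $S(\g)$, so that $\Delta(e_{i_1})=e_{i_1}\otimes 1+1\otimes e_{i_1}$, the last rule $a\circ(bc)=\sum(a'\circ b)(a''\circ c)$ together with $1\circ c=c$ shows that $e_{i_1}\circ(-)$ acts as a derivation. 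Hence $e_{i_1}\circ a=\sum_{l=2}^{m}e_{i_2}\cdots(e_{i_1}\circ e_{i_l})\cdots e_{i_m}=\sum_{l=2}^{m}(\lambda i_l-\mu)\,F_l$, where each $F_l$ is an $(m-1)$-factor forest of total index $N$. Applying the induction hypothesis to each $F_l$ yields
$$(e_{i_1}\circ a)\circ e_j=\Bigl(\sum_{l=2}^{m}(\lambda i_l-\mu)\Bigr)P_{m-1}(j)\,e_{N+j}=\bigl(\lambda\sigma-(m-1)\mu\bigr)P_{m-1}(j)\,e_{N+j}.$$

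Subtracting the two terms, the $\lambda\sigma$ contributions cancel and one is left with $P_{m-1}(j)\bigl(\lambda j-\mu+(m-1)\mu\bigr)e_{N+j}=P_{m-1}(j)\bigl(\lambda j+(m-2)\mu\bigr)e_{N+j}=P_m(j)\,e_{N+j}$, which closes the induction. The main point to watch, and the step I expect to carry the weight of the argument, is precisely this cancellation of the $\lambda\sigma$ terms: it is what makes the result depend only on $j$ and on $m$ rather than on the individual indices $i_l$, and it is the reason the answer collapses to the single clean product $P_m(j)$. The only other delicate ingredient is the justification that $e_{i_1}\circ(-)$ is a derivation of the commutative product on $S(\g)$, but this is immediate from the cocycle-type rule for $\circ$ on a product together with the primitivity of $e_{i_1}$.
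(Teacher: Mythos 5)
Your proof is correct and follows essentially the same route as the paper's: induction on $m$, splitting off $e_{i_1}$ via the rule $(xa)\circ b=x\circ(a\circ b)-(x\circ a)\circ b$, applying the induction hypothesis to both terms, and concluding by the cancellation of the $\lambda\sigma$ contributions. Your explicit justification that $e_{i_1}\circ(-)$ is a derivation (from primitivity and the rule $a\circ(bc)=\sum(a'\circ b)(a''\circ c)$) is a detail the paper uses silently, but it is the same argument.
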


\begin{proof} We put $P_m(j)=(\lambda j-\mu)(\lambda j) (\lambda j+\mu)\ldots (\lambda j+(m-2)\mu)$.
We proceed inductively on $m$. If $m=1$, it is the definition of the pre-Lie product of $\g_{FdB}$.
Let us assume the result at rank $m-1$. Then:
\begin{eqnarray*}
(e_{i_1}\ldots e_{i_m})\circ e_j&=&e_{i_1}\circ ((e_{i_2}\ldots e_{i_m})\circ e_j)
-(e_{i_1}\circ (e_{i_2}\ldots e_{i_m})) \circ e_j\\
&=&P_{m-1}(j)e_{i_1} \circ  e_{i_2+\ldots+i_m+j}
-\sum_{k=2}^m (\lambda i_k-\mu)(e_{i_2}\ldots e_{i_1+i_k}\ldots e_{i_m})\circ e_j\\
&=&P_{m-1}(j)(\lambda(i_2+\ldots+i_m+j)-\mu)e_{i_1+\ldots+i_m+j}\\
&&-\sum_{k=2}^m  P_{m-1}(j)(\lambda i_k-\mu)e_{i_1+\ldots+i_m+j}\\
&=&P_{m-1}(j)(\lambda(i_2+\ldots+i_m+j-i_2-\ldots-i_m)-\mu+(m-1)\mu)e_{i_1+\ldots+i_m+j}\\
&=&P_m(j)e_{i_1+\ldots+i_m+j}.
\end{eqnarray*}
So the result is true for all $n$. \end{proof}

\subsection{Morphisms of pre-Lie algebras}

Let us choose a set $J\subseteq \mathbb{N}^*$. As $\g_{CK}^J$ is the free pre-Lie algebra generated by the $\tdun{$j$}$'s,
there is a unique pre-Lie algebra morphism:
$$\phi:\left\{\begin{array}{rcl}
\g_{CK}^J&\longrightarrow&\g_{FdB}\\
\tdun{$j$},\:j\in J&\longrightarrow& e_j.
\end{array}\right.$$

\begin{lemma}
For all decorated rooted tree $t$, $\exists \mu_t\in K$, $\phi(t)=\mu_t e_{|t|}$. the coefficients $\mu_t$ can be inductively computed in the following way:
\begin{enumerate}
\item $\mu_{\tdun{$j$}}=1$.
\item Let $t=B_j(t_1\ldots t_m)$, with $m\geq 1$. Then:
$$\mu_t=\mu_{t_1}\ldots \mu_{t_m}(\lambda j-\mu)(\lambda j)(\lambda j+\mu)\ldots (\lambda j+(m-2)\mu).$$
\end{enumerate}\end{lemma}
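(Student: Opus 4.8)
The plan is to induct on the number of vertices of $t$, obtaining the existence of the scalar $\mu_t$ for free from a grading argument and then reading off the recursion from the two propositions just established. Both $\g_{CK}^J$ and $\g_{FdB}$ are graded pre-Lie algebras, with $\tdun{$j$}$ and $e_j$ homogeneous of degree $j$; since $\g_{CK}^J$ is freely generated as a pre-Lie algebra by the $\tdun{$j$}$'s, since $\phi$ sends each generator to $e_j$ of the same degree, and since the pre-Lie products on both sides are homogeneous, $\phi$ is a graded morphism. Hence, for a tree $t$ of degree $|t|=n$, the image $\phi(t)$ lies in the degree-$n$ component of $\g_{FdB}$, which is $Ke_n$. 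This gives $\phi(t)=\mu_t e_{|t|}$ for a unique $\mu_t\in K$, and for $t=\tdun{$j$}$ it yields $\mu_{\tdun{$j$}}=1$, the base case.

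Next I would isolate the combinatorial identity that produces the tree $B_j(t_1\ldots t_m)$ from a single pre-Lie product. By the grafting description of $\circ$ on $S(\g_{CK}^J)$ proved above, applied with $G=\tdun{$j$}$ a tree with a single vertex, every $s_l$ in the sum must be that unique vertex, so all the $t_l$ are grafted onto the root decorated by $j$; this produces exactly $t=B_j(t_1\ldots t_m)$, i.e.
$$(t_1\ldots t_m)\circ \tdun{$j$}=B_j(t_1\ldots t_m).$$
Granting this, the inductive step amounts to applying $\phi$ to this identity, substituting $\phi(t_l)=\mu_{t_l}e_{|t_l|}$, and invoking the Fa\`a di Bruno proposition
$$(e_{i_1}\ldots e_{i_m})\circ e_j=(\lambda j-\mu)(\lambda j)(\lambda j+\mu)\ldots(\lambda j+(m-2)\mu)\,e_{i_1+\ldots+i_m+j};$$
since $|t|=|t_1|+\ldots+|t_m|+j$, this gives
$$\phi(t)=\mu_{t_1}\ldots\mu_{t_m}(\lambda j-\mu)(\lambda j)(\lambda j+\mu)\ldots(\lambda j+(m-2)\mu)\,e_{|t|},$$
which is precisely the claimed recursion and closes the induction.

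The step that genuinely requires care — the main obstacle — is that $\phi$ extends to an algebra morphism $S(\g_{CK}^J)\to S(\g_{FdB})$ that is \emph{compatible with the extended product $\circ$} of the Oudom--Schedler theorem, i.e. $\phi(a\circ b)=\phi(a)\circ\phi(b)$ for all $a,b\in S(\g_{CK}^J)$, since the identity above lives on the full symmetric algebra rather than on single trees. This is the functoriality of the construction recalled earlier, which is not stated explicitly; I would establish it by checking that each of the four defining relations of the extended $\circ$ is preserved by $\phi$, by an induction on degree using only that $\phi$ is an algebra morphism respecting the pre-Lie product on generators and commuting with the standard coproduct of $S(\g)$. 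Once this compatibility is in hand, the rest is the routine computation sketched above.
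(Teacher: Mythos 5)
Your proof is correct and follows essentially the same route as the paper's: both write $t=(t_1\ldots t_m)\circ\tdun{$j$}$ using the grafting description of the extended product, apply the extension of $\phi$ to the symmetric algebras, and conclude with the identity $(e_{i_1}\ldots e_{i_m})\circ e_j=(\lambda j-\mu)(\lambda j)(\lambda j+\mu)\ldots(\lambda j+(m-2)\mu)e_{i_1+\ldots+i_m+j}$. The only difference is that you make explicit two points the paper leaves implicit --- the grading argument for the existence of $\mu_t$, and the verification that the extended morphism satisfies $\phi(a\circ b)=\phi(a)\circ\phi(b)$, which the paper simply asserts --- and your plan for checking the latter against the four defining relations is sound.
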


\begin{proof} We extend $\phi$ in a Hopf algebra morphism from $S(\g_{CK}^J)$ to $S(\g_{FdB})$. Then,
$\phi(a\circ b)=\phi(a)\circ \phi(b)$ for any $a,b \in S(\g_{CK}^J)$. The first point is obvious. For the second point:
\begin{eqnarray*}
\phi(t)&=&\phi((t_1\ldots t_m) \circ \tdun{$j$})\\
&=&(\phi(t_1)\ldots \phi(t_m) )\circ \phi(\tdun{$j$})\\
&=&\mu_{t_1}\ldots \mu_{t_m} e_{|t_1|}\ldots e_{|t_m|} \circ e_j\\
&=&\mu_{t_1}\ldots \mu_{t_m}(\lambda j-\mu)(\lambda j)(\lambda j+\mu)\ldots (\lambda j+(m-2)\mu)e_{|t_1|+\ldots+|t_m|+j}\\
&=&\mu_{t_1}\ldots \mu_{t_m}(\lambda j-\mu)(\lambda j)(\lambda j+\mu)\ldots (\lambda j+(m-2)\mu)e_{|t|}.
\end{eqnarray*}
So the announced result holds. \end{proof}

\begin{lemma}\begin{enumerate}
\item If $\lambda \neq 0$, the morphism $\phi$ is surjective if, and only if, ($1\in J$) and ($2\in J$ or $\mu \neq \lambda$).
\item If $\lambda=0$, the morphism $\phi$ is surjective if, and only if,  ($\mu\neq 0$ and $1 \in J$) or ($J=\mathbb{N}^*$).
\end{enumerate}\end{lemma}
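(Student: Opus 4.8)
The plan is to first reduce surjectivity to a statement about a single coefficient in each degree, and then to read off the vanishing of that coefficient from the inductive formula of the previous lemma. Since $\phi$ is a morphism of graded pre-Lie algebras homogeneous of degree $0$, and since each homogeneous component of $\g_{FdB}$ is the line $Ke_n$, the map $\phi$ is surjective if, and only if, for every $n\geq 1$ there exists a decorated rooted tree $t$ of degree $n$ with $\mu_t\neq 0$. Recall that $\mu_t$ is obtained by multiplying, over the vertices of $t$ carrying $m\geq 1$ children and a decoration $j$, the factors $P_m(j)=(\lambda j-\mu)(\lambda j)(\lambda j+\mu)\cdots(\lambda j+(m-2)\mu)$, a leaf contributing $1$. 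Hence everything comes down to controlling when such a product vanishes.

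For the first point, assume $\lambda\neq 0$. For necessity I would exhibit the first missing homogeneous component in each failing case: if $1\notin J$ then every tree has degree $\geq 2$, so $e_1$ is missed; if $1\in J$, $2\notin J$ and $\mu=\lambda$, then the only tree of degree $2$ is the edge decorated by $1,1$, whose coefficient is $P_1(1)=\lambda-\mu=0$, so $e_2$ is missed. For sufficiency, if $\mu\neq\lambda$ the ladder with $n$ vertices all decorated by $1$ has coefficient $(\lambda-\mu)^{n-1}\neq 0$ and realizes every degree $n\geq 1$. If instead $\mu=\lambda$ and $2\in J$, I note that $P_1(2)=2\lambda-\mu=\lambda\neq 0$, and build a tree of every degree by induction: take $\tdun{$1$}$ and $\tdun{$2$}$ in degrees $1$ and $2$, and for $n\geq 3$ graft a root decorated by $2$ (contributing the nonzero factor $P_1(2)$) onto the degree $(n-2)$ tree already constructed; since only leaves ever carry the decoration $1$, no vanishing factor is introduced.

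For the second point, assume $\lambda=0$. The key observation is that $P_m(j)$ then contains the factor $\lambda j=0$ as soon as $m\geq 2$, so any tree possessing a branching vertex has $\mu_t=0$; thus $\mu_t\neq 0$ forces $t$ to be a ladder, in which case $\mu_t=(-\mu)^{\ell-1}$ where $\ell$ is the number of vertices. If $\mu=0$, only the one-vertex ladders survive, so the image is $Vect(e_j\mid j\in J)$, which is all of $\g_{FdB}$ exactly when $J=\mathbb{N}^*$. If $\mu\neq 0$, every ladder survives, and since a ladder of decorations $j_1,\dots,j_\ell$ realizes the degree $j_1+\cdots+j_\ell$, the image is $Vect(e_n\mid n\in\langle J\rangle)$, where $\langle J\rangle$ is the numerical semigroup generated by $J$; this is everything precisely when $\langle J\rangle=\mathbb{N}^*$, i.e. when $1\in J$. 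Collecting the two subcases gives the condition ``($\mu\neq 0$ and $1\in J$) or ($\mu=0$ and $J=\mathbb{N}^*$)'', which is the stated one, since the remaining case $J=\mathbb{N}^*$ with $\mu\neq 0$ is already covered by the first clause.

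The main obstacle is the boundary case $\lambda\neq 0$, $\mu=\lambda$ of the first point: there the natural candidate trees (ladders of $1$'s) all have vanishing coefficient, so the positive direction genuinely requires the inductive construction above and cannot be read off directly, while the same phenomenon is exactly what makes $e_2$ unreachable when $2\notin J$. Verifying that $e_2$, and not some later $e_n$, is the precise first obstruction, and that the decoration $2$ suffices to repair all higher degrees, is the delicate part; the remaining cases are routine once the vanishing pattern of $P_m(j)$ is established.
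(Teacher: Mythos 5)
Your proof is correct and follows essentially the same route as the paper: the same degree~$1$ and degree~$2$ obstructions for necessity, and for sufficiency the same inductive constructions (ladders of $1$'s when $\mu\neq\lambda$, grafting onto a root decorated by $2$ when $\mu=\lambda$, which is the tree-level form of the paper's computation $e_{n-2}\circ e_2=\lambda e_n$ inside $Im(\phi)$), together with the same ladder analysis when $\lambda=0$. Your version is if anything slightly more careful in the $\lambda=0$ case, where you identify the image explicitly as the span of the $e_n$ with $n$ in the semigroup generated by $J$ and check the equivalence of the two formulations of the condition.
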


\begin{proof} There is a unique tree decorated by $\mathbb{N}^*$ of degree $1$ , which is $\tdun{$1$}$.
By homogeneity, $Im(\phi)_1=(0)$ if $1 \notin J$. So if $\phi$ is surjective, $1\in J$.

There are two trees decorated by $\mathbb{N}^*$ of degree $2$, which are $\tdun{$2$}$ and $\tddeux{$1$}{$1$}$.
Moreover, $\phi(\tddeux{$1$}{$1$})=(\lambda-\mu)e_2$. Therefore, if $2 \notin J$ and $\lambda=\mu$, then $Im(\phi)_2=(0)$.
As a consequence, if $\phi$ is surjective, $\mu \neq \lambda$ or $2 \in J$. 

If $\lambda=\mu=0$, then $\phi(t)=0$ if $t$ is not reduced to a single root, and $\phi(\tdun{$i$})=e_i$ for all $i\in J$. 
So $Im(\phi)=Vect(e_i\mid i\in J)$. As a consequence, if $\lambda=\mu=0$, then $\phi$ is surjective if, and only if, $j=\mathbb{N}^*$.\\

These three observations prove $\Longrightarrow$ in both cases. \\

1. $\Longleftarrow$. Let us first assume that $1\in J$ and $\mu \neq \lambda$. Let us prove by induction that $e_n \in Im(\phi)$. This is obvious for $n=1$.
If $e_{n-1}\in Im(\phi)$, then $e_{n-1} \circ \phi(\tdun{$1$})=e_{n-1}\circ e_1=(\lambda-\mu)e_n \in Im(\phi)$, so $e_n \in Im(\phi)$.
Hence, $\phi$ is surjective.

Let us assume that $1,2 \in J$ and $\mu=\lambda$. Let us prove by induction that $e_n \in Im(\phi)$. This is obvious for $n=1,2$.
If $e_1,\ldots,e_{n-1}\in Im(\phi)$, then $e_{n-2}\circ \phi(\tdun{$2$})=e_{n-2}\circ e_2=\lambda e_n$,
so $e_n \in Im(\phi)$. Hence, $\phi$ is surjective. \\

2. $\Longrightarrow$.  If $\lambda=0$, then for any tree $t$, there exists an integer $m_t$ such that $\phi(t)=\mu^{m_t}e_{|t|}$.
So, if $\mu \neq 0$ and $1 \in J$, taking any tree with $n$ vertices, all decorated by $1$, $e_n \in Im(\phi)$. \end{proof}\\

Using the duality between $S(\g_{CK}^J)$ dans $\h_{CK}^J$, we obtain a morphism of Hopf algebras $\phi^*$ from $S(\g_{FdB})^*$ to
$\h_{CK}^J$. More precisely:

\begin{prop}The image of $\phi^*$ is generated by the elements:
$$y(n)=\sum_{|t|=n} \frac{\mu_t}{s_t} t,\:n\geq 1.$$
If ($\lambda \neq 0$) and ($1\in J$) and ($2\in J$ or $\mu \neq \lambda$), then $Im(\phi^*)$ is isomorphic to the Faà di Bruno Hopf algebra.
\end{prop}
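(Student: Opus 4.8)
The plan is to compute $\phi^*$ explicitly on a set of algebra generators of $S(\g_{FdB})^*$ and then read off both assertions. First I would use the extension of $\phi$ to the Hopf algebra morphism $S(\g_{CK}^J)\longrightarrow S(\g_{FdB})$ from the preceding lemma. Since $S(\g_{FdB})^*$ is the graded dual of a graded connected cocommutative Hopf algebra built on $\g_{FdB}$, the Cartier-Quillen-Milnor-Moore description makes it a commutative algebra generated by the linear forms $x_n$ dual to the basis $(e_n)$ of $\g_{FdB}$. The dual map $\phi^*$ lands in $S(\g_{CK}^J)^*\cong \h_{CK}^J$ and is an algebra morphism (being dual to a Hopf algebra morphism), so $Im(\phi^*)$ is generated by the elements $\phi^*(x_n)$.

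The core step is to identify $\phi^*(x_n)$. Using the pairing $\langle F,G\rangle=s_F\delta_{F,G}$, the element $\phi^*(x_n)$ is determined by $\langle F,\phi^*(x_n)\rangle=x_n(\phi(F))$ for every forest $F$. Because $\phi(t_1\ldots t_m)=\mu_{t_1}\ldots\mu_{t_m}\,e_{|t_1|}\ldots e_{|t_m|}$ and $x_n$ annihilates every monomial in the $e_i$ other than the single primitive $e_n$, only single trees $t$ with $|t|=n$ contribute, giving $\langle t,\phi^*(x_n)\rangle=\mu_t$. Comparing with $\langle t,\phi^*(x_n)\rangle=s_t\cdot(\mbox{coefficient of }t\mbox{ in }\phi^*(x_n))$ shows that this coefficient equals $\mu_t/s_t$, that is $\phi^*(x_n)=y(n)$. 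Since $\phi^*$ is an algebra morphism and the $x_n$ generate the source, this proves the first assertion.

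For the second assertion I would invoke the surjectivity criterion of the preceding lemma: under the hypotheses $\lambda\neq 0$, $1\in J$ and ($2\in J$ or $\mu\neq\lambda$), the pre-Lie morphism $\phi$, hence the Hopf algebra morphism $S(\g_{CK}^J)\longrightarrow S(\g_{FdB})$, is surjective. As both Hopf algebras are graded with finite-dimensional homogeneous components (here $J\subseteq\mathbb{N}^*$, so each degree carries finitely many decorations), the dual $\phi^*$ is injective, and therefore restricts to an isomorphism onto $Im(\phi^*)$. Combining this with the remark that, for $\lambda\neq 0$, the graded dual $S(\g_{FdB})^*$ of the enveloping algebra of $\g_{FdB}$ is isomorphic to $\h_{FdB}$, we obtain $Im(\phi^*)\cong \h_{FdB}$.

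The main obstacle is the bookkeeping in the dualization: one must be sure that the algebra generators of $S(\g_{FdB})^*$ are exactly the forms dual to the $e_n$ (so that evaluating them against $\phi(F)$ kills every forest with at least two components), and that the symmetry factors $s_t$ in the pairing convert the values $\mu_t$ into the stated coefficients $\mu_t/s_t$. Once $\phi^*(x_n)=y(n)$ is established, both claims follow formally from surjectivity of $\phi$ and the identification of $S(\g_{FdB})^*$ with $\h_{FdB}$.
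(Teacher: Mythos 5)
Your proof is correct and follows essentially the approach the paper intends: the paper states this proposition without an explicit proof, leaving it as a direct consequence of the preceding lemmas, and your argument (dualizing the extended morphism $S(\g_{CK}^J)\longrightarrow S(\g_{FdB})$ via the pairing $\langle F,G\rangle=s_F\delta_{F,G}$, computing $\phi^*(x_n)=y(n)$ on the polynomial generators, then using the surjectivity criterion so that $\phi^*$ is injective and $Im(\phi^*)\cong S(\g_{FdB})^*\cong\h_{FdB}$ when $\lambda\neq 0$) is exactly that implicit chain of reasoning. The bookkeeping you flag as the main obstacle is handled correctly: forests with at least two trees are killed by $x_n$, and the factor $s_t$ in the pairing produces the coefficients $\mu_t/s_t$.
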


{\bf Remark.} The coefficients $\nu_t=\frac{\mu_t}{s_t}$ can be inductively computed in the following way:
\begin{itemize}
\item $\nu_{\tdun{$i$}}=1$.
\item If $t=B_i\left(t_1^{p_1}\ldots t_k^{p_k}\right)$, where $p_1,\ldots,p_k$ are different trees, and putting $m=p_1+\ldots+p_k$:
$$\nu_t=\frac{(\lambda j-\mu)(\lambda j)(\lambda j+\mu)\ldots (\lambda j+(m-2)\mu)}{m!}
\frac{m!}{p_1!\ldots p_k!} \nu_{t_1}^{p_1}\ldots \nu_{t_k}^{p_k}.$$
\end{itemize}
This implies that $y=\sum y(n)$ is the solution of the equation of theorem \ref{12}-1, $\lambda$ and $\mu$ being the parameters chosen 
in the definition of $\g_{FdB}$. As a consequence:

\begin{cor}\label{20}
All the Dyson-Schwinger equations of the first case in theorem \ref{12} are Hopf.
Moreover, if ($\lambda \neq 0$) and  ($1 \in J$) and ($2\in J$ or $\lambda \neq \mu$), the Hopf subalgebra associated to an equation of the first case
is isomorphic to the Faà di Bruno Hopf algebra; if ($\lambda=0$) and ($\mu \neq 0$ and $1\in J$) or ($J=\mathbb{N}^*$), it is isomorphic
to the Hopf algebra of symmetric functions $Sym$.
\end{cor}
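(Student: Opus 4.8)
The plan is to deduce the whole statement from the preceding proposition and remark by a duality argument, so that essentially no new computation is required. First I would record that $y=\sum_n y(n)$ is exactly the solution $x$ of the first-case equation. Indeed, the Remark following the preceding proposition gives a recursion for $\nu_t=\mu_t/s_t$ which, once one substitutes the explicit coefficients $a^{(j)}_n=\frac{(\lambda j-\mu)(\lambda j)(\lambda j+\mu)\ldots(\lambda j+\mu(n-2))}{n!}$ of $f^{(j)}$ read off from Remark~1 after Theorem~\ref{12}, coincides term by term with the recursion defining the coefficients $a_t$ of the unique solution. Since both families share the base case $\nu_{\tdun{$j$}}=a_{\tdun{$j$}}=1$, uniqueness forces $y(n)=x(n)$ for all $n$, so that $\hs=\langle x(n)\mid n\geq 1\rangle=\langle y(n)\mid n\geq 1\rangle=Im(\phi^*)$.

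Next I would establish that the equation is Hopf, that is, that $\hs$ is a Hopf subalgebra. Since $\phi$ is a morphism of pre-Lie algebras it is in particular a morphism of Lie algebras, hence induces a Hopf algebra morphism $\U(\phi):\U(\g_{CK}^J)\longrightarrow \U(\g_{FdB})\cong S(\g_{FdB})$, the last isomorphism being the one of the quoted theorem. As both sides are graded with finite-dimensional homogeneous components, transposing yields the Hopf algebra morphism $\phi^*:S(\g_{FdB})^*\longrightarrow (\U(\g_{CK}^J))^*=\h_{CK}^J$ of the preceding proposition. The image of a Hopf algebra morphism is a Hopf subalgebra, and therefore $\hs=Im(\phi^*)$ is Hopf. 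This proves the first assertion, for every choice of $\lambda,\mu$ and $J$.

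Finally, for the isomorphism type I would invoke the surjectivity lemma. Under the hypotheses ($\lambda\neq0$, $1\in J$, and $2\in J$ or $\mu\neq\lambda$), resp.\ ($\lambda=0$ and either $\mu\neq0,1\in J$, or $J=\mathbb{N}^*$), that lemma gives that $\phi$ is surjective; hence $\U(\phi)$ is surjective and its transpose $\phi^*$ is injective, so $\phi^*$ is an isomorphism from $S(\g_{FdB})^*\cong\U(\g_{FdB})^*$ onto $\hs$. It then remains only to identify $\U(\g_{FdB})^*$. If $\lambda\neq0$, the bracket $[e_i,e_j]=\lambda(j-i)e_{i+j}$ makes $\g_{FdB}$ isomorphic, as a Lie algebra, to the Fa\`a di Bruno Lie algebra (as noted in the remark defining $\g_{FdB}$), so the graded dual of its enveloping algebra is $\h_{FdB}$; if $\lambda=0$ the bracket vanishes, $\g_{FdB}$ is abelian, $\U(\g_{FdB})=S(\g_{FdB})$ is a polynomial Hopf algebra, and its graded dual is the Hopf algebra of symmetric functions $Sym$. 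The one genuinely delicate point is the duality bookkeeping: one must check that surjectivity of $\phi$ at the pre-Lie level really does transpose to injectivity of $\phi^*$, and that the parameter conditions of the surjectivity lemma match, case for case, the conditions stated in the corollary; everything else is formal.
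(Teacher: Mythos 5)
Your proposal is correct and follows essentially the same route as the paper: the corollary there is deduced exactly from the preceding proposition and remark (identifying $y=\sum y(n)$ with the solution via the coefficient recursion, obtaining Hopfness from $\hs=Im(\phi^*)$ with $\phi^*$ a Hopf algebra morphism, and using the surjectivity lemma plus the identification of $\g_{FdB}$ as the Fa\`a di Bruno Lie algebra when $\lambda\neq 0$, resp.\ an abelian Lie algebra giving $Sym$ when $\lambda=0$). Your explicit treatment of the duality bookkeeping (surjectivity of $\phi$ transposing to injectivity of $\phi^*$ on graded duals with finite-dimensional components) is left implicit in the paper but is exactly the intended argument.
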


{\bf Remark.} In the general case, let us denote by $\J$ the set of indices $i\in J$ such that $x(i) \neq 0$.
The dual Lie algebra inherits a dual basis $(e_i)_{i\in J}$, with $[e_i,e_j]=\lambda (j-i)e_{i+j}$, so it a Lie subalgebra of $\g_{FdB}$.
Dually, the Hopf subalgebra associated to an equation of the first case is isomorphic to a quotient of the Faà di Bruno Hopf algebra if $\lambda\neq 0$
or a quotient of the Hopf algebra of symmetric functions if $\lambda=0$.\\

{\bf Example.} The following example comes from \cite{Bergbauer,Kreimer2,Tanasa}:
$$x=\sum_{n\geq 1}B_n\left((1+x)^{n+1}\right),$$
where for all $n \geq 1$, $B_n$ is a $1$-cocyle of certain graded Hopf algebra, homogeneous of degree $n$.
This is a system of theorem \ref{12}, with $\lambda=1$ and $\mu=-1$, so it generates a Hopf subalgebra isomorphic to the Faà di Bruno Hopf algebra.
The isomorphism is given by:
$$\left\{\begin{array}{rcl}
\h_{FdB}&\longrightarrow& \h_{CK}^J\\
x_i&\longrightarrow&x(i).
\end{array}\right.$$

\subsection{Pre-Lie algebra associated to an equation of the second type}

\begin{prop}\label{21}
Let us consider the following equation:
$$(E):x=\sum_{\substack{j\in J\\ m\mid j}}B_j(1+\alpha x)+
\sum_{\substack{j\in J\\ m/ \hspace{-1.2mm} \mid \hspace{1.2mm} j}}B_j(1,)$$
with $\alpha \in K-\{0\}$. We put:
$$\J=\{a_1j_1+\ldots+a_kj_k\mid k\geq 1,a_1,\ldots,a_{k-1}\in \mathbb{N}^*,a_k\in \{0,1\},
j_1,\ldots, j_k \in J, m\mid j_1,\ldots, j_{k-1}\}.$$
The subalgebra generated by the components of the solution of $(E)$ is Hopf, and its dual is the enveloping algebra of a pre-Lie algebra $\g$.
The pre-Lie product of $\g$ is given in a certain basis $(f_i)_{i\in \J}$  by:
$$f_i \circ f_j=\left\{\begin{array}{l}
0 \mbox{ if } m/ \hspace{-2.6mm} \mid \hspace{2.6mm} j,\\
f_{i+j} \mbox{ if }m\mid j.
\end{array}\right.$$
It is associative. \end{prop}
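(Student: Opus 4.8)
The plan is to exploit the explicit description of the solution already obtained in the proof of Theorem \ref{12}, and then to dualize by the Cartier-Quillen-Milnor-Moore theorem. First I would recall from the second case of Theorem \ref{12} that $(E)$ is Hopf and that its solution $x=\sum_n x(n)$ satisfies the coproduct formula (\ref{E2}),
$$\Delta(x(n))=x(n)\otimes 1+1\otimes x(n)+\alpha\sum_{\substack{k=1\\ m\mid k}}^{n-1}x(n-k)\otimes x(k).$$
Since every $f^{(j)}$ equals $1+\alpha h$ (when $m\mid j$) or $1$ (when $m\nmid j$), the inductive formula for the coefficients $a_t$ shows that $x$ is supported on \emph{ladders}: the ladder $B_{i_1}\circ\cdots\circ B_{i_r}(1)$ occurs, with coefficient $\alpha^{r-1}$, exactly when $m\mid i_1,\dots,i_{r-1}$ and $i_r\in J$ is arbitrary. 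Grouping equal decorations, the degrees $i_1+\cdots+i_r$ of the ladders that actually occur run precisely through $\J$; hence $x(n)\neq 0$ if and only if $n\in\J$, and the $x(n)$, $n\in\J$, are algebraically independent and generate $\hs$.

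Next I would apply the Cartier-Quillen-Milnor-Moore theorem: $\hs$ is graded, connected and commutative, so its graded dual is the enveloping algebra $\U(\g)$ of the Lie algebra $\g$ of its primitive elements. As the $x(n)$ freely generate $\hs$, the space $\g$ carries a basis $(e_n)_{n\in\J}$ dual to the generators $(x(n))_{n\in\J}$. Following the construction used for the Fa\`a di Bruno pre-Lie algebra in Section \ref{s4}, I would endow $\g$ with the pre-Lie product defined by $\langle e_i\circ e_j,x(n)\rangle=\langle e_i\otimes e_j,\Delta(x(n))\rangle$. Reading off the displayed coproduct, the terms $x(n)\otimes 1$ and $1\otimes x(n)$ pair to $0$ with $e_i\otimes e_j$, while the remaining sum contributes only for $m\mid j$ and $n=i+j$; this gives $e_i\circ e_j=\alpha e_{i+j}$ if $m\mid j$ and $e_i\circ e_j=0$ otherwise.

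Finally I would absorb the constant by setting $f_n=\alpha^{-1}e_n$, which is legitimate since $\alpha\neq 0$; then $f_i\circ f_j=f_{i+j}$ when $m\mid j$ and $0$ when $m\nmid j$, the announced form. Associativity is then a short case analysis of $(f_i\circ f_j)\circ f_k$ against $f_i\circ(f_j\circ f_k)$: if $m\mid j$ and $m\mid k$ both equal $f_{i+j+k}$; if $m\nmid k$ the outer product vanishes on both sides; and if $m\nmid j$ but $m\mid k$ the inner product $f_j\circ f_k=f_{j+k}$ has $m\nmid(j+k)$, so both sides vanish again. The Lie bracket of $\g$ is recovered as the antisymmetrization of $\circ$, which completes the identification.

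The step I expect to be delicate is not associativity, which is routine, but the bookkeeping in the two dualization steps: verifying that the $x(n)$ are genuinely algebraically independent, so that $\g$ has exactly the basis indexed by $\J$, and tracking the symmetry-factor pairing of Section \ref{s4} between $S(\g_{CK}^J)$ and $\h_{CK}^J$ through the restriction to $\hs$, so that the structure constant extracted from (\ref{E2}) is exactly $\alpha$ (and not a symmetry-factor multiple of it) before the rescaling $f_n=\alpha^{-1}e_n$.
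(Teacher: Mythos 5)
Your proposal is correct and follows essentially the same route as the paper's own proof: both rely on the coproduct formula (\ref{E2}), identify $\J$ as the set of degrees of the ladders supporting the solution, extract the structure constants of the dual product (equal to $\alpha$ or $0$) by pairing $e_i\otimes e_j$ against $\Delta(x(i+j))$, rescale by $\alpha^{-1}$, and verify associativity by the same case analysis. The symmetry-factor subtlety you flag at the end is vacuous here, since ladders have trivial symmetry and the product is defined by duality against the coproduct of $\hs$ itself rather than through the Grossman--Larson pairing, so nothing further is needed.
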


\begin{proof} We already proved that these equations are Hopf, see (\ref{E2}). 
For all $n \in \mathbb{N}^*$, $x(n)$ is a linear span of ladders of degree $n$, such that the vertices are decorated by elements of $J$,
all multiples of $m$, except maybe the decoration of the leaf. It is then clear that $\J$ is the set of indices $n$ such that $x(n) \neq 0$.
As a consequence, the dual pre-Lie algebra inherits a dual basis $(e_i)_{i\in \J}$. By homogeneity,
$e_i \circ e_j=\eta_{i,j} e_{i+j}$, for a certain scalar $\eta_{i,j}$. Using the duality and (\ref{E2}):
\begin{eqnarray*}
\eta_{i,j}&=&(e_i \circ e_j)\left(x(i+j)\right)\\
&=&(e_i \otimes e_j)\left(\Delta(x(i+j))\right)\\
&=& (e_i \otimes e_j)\left(x(i+j)\otimes 1+1\otimes x(i+j)+\alpha \sum_{\substack{k=1\\m\mid k} }^{i+j-1} x(i+j-k) \otimes x(k)\right)\\
&=&\left\{\begin{array}{l}
0 \mbox{ if }m/ \hspace{-2.6mm} \mid \hspace{2.6mm} j,\\
\alpha \mbox{ if }m\mid j.
\end{array}\right.
\end{eqnarray*}

Let us take $i,j,k \in \J$.
\begin{eqnarray*}
(e_i\circ e_j)\circ e_k&=&\left\{\begin{array}{l}
\alpha^2 e_{i+j+k}\mbox{ if $m \mid j$ and $m\mid k$},\\
0\mbox{ if not};
\end{array}\right. \\
e_i\circ (e_j\circ e_k)&=&\left\{\begin{array}{l}
\alpha^2 e_{i+j+k}\mbox{ if $m \mid j$ and $m\mid j+k$},\\
0\mbox{ if not}.
\end{array}\right.
\end{eqnarray*}
So the pre-Lie product $\circ$ is associative. We now put $f_i=\frac{1}{\alpha} e_i$. The assertion on these elements is easily proved. \end{proof}\\

{\bf Remark.} $\J=\mathbb{N}^*$ if, and only if, $1,\ldots,m\in J$.

\section{Generalization of Hopf systems}

\subsection{Fundamental systems}

{\bf Notations.}\begin{enumerate}
\item For any $\beta \in K$, we put:
$$F_\beta(h)=\sum_{k=1}^\infty \frac{(1+\beta)\ldots (1+(n-1)\beta)}{n!} x^k=\left\{\begin{array}{l}
(1-\beta h)^{-\frac{1}{\beta}} \mbox{ if }\beta\neq 0,\\
e^h\mbox{ if }\beta=0.
\end{array}\right.$$
\item For all $\beta \neq -1$:
$$F_{\frac{\beta}{1+\beta}}((1+\beta)h)=\sum_{k=0}^\infty \frac{(1+\beta)\ldots (1+n\beta)}{n!} h^n,$$
so we shall put $F_{\frac{\beta}{1+\beta}}((1+\beta)h)=1$ if $\beta =-1$. 
\end{enumerate}

We first recall the definition of an extended fundamental SDSE \cite{Foissy3}:

\begin{defi}\label{22} 
An \emph{extended fundamental SDSE} has the following form: I$I$ is a set with a partition $I=I_0 \cup J_0 \cup K_0 \cup L_0\cup I_1 \cup J_1 \cup E$, such that:
\begin{itemize}
\item any part of this partition may be empty.
\item $I_0 \cup J_0$ is not empty.
\end{itemize}
We define a SDSE in the following way:
\begin{enumerate}
\item For all $i \in I_0$, there exists $\beta_i \in K$, such that:
$$f_i=F_{\beta_i}(h_i) \prod_{j\in I_0-\{i\}} F_{\frac{\beta_j}{1+\beta_j}}((1+\beta_j) h_j) \prod_{j\in J_0} F_1(h_j).$$
\item For all $i\in J_0$:
$$f_i=\prod_{j\in I_0} F_{\frac{\beta_j}{1+\beta_j}}((1+\beta_j) h_j) \prod_{j\in J_0-\{i\}} F_1(h_j).$$
\item For all $i\in K_0$:
$$f_i=\prod_{j\in I_0} F_{\frac{\beta_j}{1+\beta_j}}((1+\beta_j) h_j) \prod_{j\in J_0} F_1(h_j).$$
\item For all $i\in L_0$, there exists a family of scalars $\left(a_j^{(i)}\right)_{j\in I_0\cup J_0 \cup K_0}$, such that 
$(\exists j\in I_0, \: a^{(i)}_j \neq 1+\beta_j)$ or $(\exists j\in J_0, \: a^{(i)}_j \neq 1)$ or $(\exists j\in K_0, \: a^{(i)}_j \neq 0)$. 
$$f_i=\prod_{j\in I_0} F_{\frac{\beta_j}{a^{(i)}_j}}\left(a^{(i)}_jh_j\right)
\prod_{j\in J_0} F_{\frac{1}{a^{(i)}_j}}\left(a^{(i)}_jh_j\right) \prod_{j\in K_0} F_0\left(a^{(i)}_jh_j\right).$$
\item For all $i\in I_1$, there exists $\nu_i\in K$, a family of scalars $\left(a_j^{(i)}\right)_{j\in I_0\cup J_0 \cup K_0}$, such that 
$\nu_i \neq 1$ and, if $\nu_i \neq 0$:
$$f_i=\frac{1}{\nu_i} \prod_{j\in I_0} F_{\frac{\beta_j}{\nu_i a^{(i)}_j}}\left(\nu_i a^{(i)}_jh_j\right)
\prod_{j\in J_0} F_{\frac{1}{\nu_i a^{(i)}_j}}\left(\nu_i a^{(i)}_jh_j\right) \prod_{j\in K_0} F_0\left(\nu_i a^{(i)}_jh_j\right)+1-\frac{1}{\nu_i};$$
if $\nu_i=0$:
$$f_i=-\sum_{j\in I_0} \frac{a^{(i)}_j}{\beta_j} \ln(1-\beta_j h_j)-\sum_{j\in J_0} a^{(i)}_j \ln(1-h_j)
+\sum_{j\in K_0} a^{(i)}_j h_j+1.$$
\item For all $i \in J_1$, there exists $\nu_i\in K-\{0\}$, a family of scalars $\left(a_j^{(i)}\right)_{j\in L_0}$, 
with the following conditions:
\begin{itemize}
\item $L_0^{(i)}=\{j\in L_0\:/\:a^{(i)}_j \neq 0\}$ is not empty.
\item For all $j,k \in L_0^{(i)}$, $f_j=f_k$. In particular, we put $c^{(i)}_t=a^{(j)}_t$ for any $j\in L_0^{(i)}$, for all $t\in I_0 \cup J_0 \cup K_0$.
\end{itemize}
Then:
\begin{eqnarray*}
f_i&=&\frac{1}{\nu_i} \prod_{j\in I_0} F_{\frac{\beta_j}{c^{(i)}_j-1-\beta_j}}\left(\left(c^{(i)}_j-1-\beta_j\right)h_j\right)
\prod_{j\in J_0} F_{\frac{1}{c^{(i)}_j-1}}\left(\left(c^{(i)}_j-1\right)h_j\right)\\
&&\prod_{j\in K_0} F_0\left(c^{(i)}_jh_j\right)+\sum_{j\in L_0^{(i)}} a^{(i)}_j h_j+1-\frac{1}{\nu_i}.
\end{eqnarray*}
\item For all $i\in E$, if $a^{(i)}_j\neq 0$ and $a^{(i)}_k \neq 0$, then $f_j=f_k$; moreover:
$$f_i=1+\sum_{j\in I} a^{(i)}_j h_j.$$
The elements of $E$ are called \emph{extension vertices}.
\end{enumerate}\end{defi}

Moreover, there is a notion of \emph{level} of a vertex with the following properties:
\begin{itemize}
\item If $a^{(i)}_j\neq 0$, then the level of $i$ is $0$ or $1$ if, and only if, the level of $j$ is $0$.
\item Let $N \geq 2$. If $a^{(i)}_j \neq 0$, then the level of $i$ is $N$ if, and only if, the level of $j$ is $N-1$.
\end{itemize}
In the preceding description, the level of the vertices in $I_0\cup J_0\cup K_0 \cup L_0$ is $0$.
The level of the vertices in $I_1 \cup J_1$ is $1$. The level of an extension vertex is at least $1$.  \\

The coefficients $\lambda_n^{(i,j)}=\lambda_n^{(i,(j,1))}$ have been computed in \cite{Foissy3}. They satisfy:
$$\lambda_n^{(i,j)}=\left\{\begin{array}{l}
a^{(i)}_j \mbox{ if } n=1,\\
\tilde{a}^{(i)}_j +b_j(n-1) \mbox{ if } n>level(i),
\end{array}\right.$$
where the coefficients $b_j$, $a^{(i)}_j$ and $\tilde{a}^{(i)}_j$ are given in the following arrays:
$$b_j: \begin{array}{c|c|c|c|c|c|c|c}
j&I_0&J_0&K_0&L_0&I_1&J_1&E\\
\hline b_j&1+\beta_j&1&0&0&0&0&0
\end{array}$$
$$ a^{(i)}_j:\begin{array}{c|c|c|c|c|c|c|c}
j\setminus i&I_0&J_0&K_0&L_0&I_1&J_1&E\\
\hline I_0&1+(1-\delta_{i,j})\beta_j&1+\beta_j&1+\beta_j&a^{(i)}_j&a^{(i)}_j&(c^{(i)}_j-1-\beta_j)/\nu_i&a^{(i)}_j\\
\hline J_0&1&1-\delta_{i,j}&1&a^{(i)}_j&a^{(i)}_j&(c^{(i)}_j-1)/\nu_i&a^{(i)}_j\\
\hline K_0&0&0&0&a^{(i)}_j&a^{(i)}_j&c^{(i)}_j/\nu_i&a^{(i)}_j\\
\hline L_0&0&0&0&0&0&a^{(i)}_j&a^{(i)}_j\\
\hline I_1&0&0&0&0&0&0&a^{(i)}_j\\
\hline J_1&0&0&0&0&0&0&a^{(i)}_j\\
\hline E&0&0&0&0&0&0&a^{(i)}_j
\end{array}$$
$$ \tilde{a}^{(i)}_j:\begin{array}{c|c|c|c|c|c|c|c}
j\setminus i&I_0&J_0&K_0&L_0&I_1&J_1&E\\
\hline I_0&1+(1-\delta_{i,j})\beta_j&1+\beta_j&1+\beta_j&a^{(i)}_j&\nu_i a^{(i)}_j&c^{(i)}_j-1-\beta_j&\tilde{a}^{(i)}_j\\
\hline J_0&1&1-\delta_{i,j}&1&a^{(i)}_j&\nu_i a^{(i)}_j&c^{(i)}_j-1&\tilde{a}^{(i)}_j\\
\hline K_0&0&0&0&a^{(i)}_j&\nu_i a^{(i)}_j&c^{(i)}_j&\tilde{a}^{(i)}_j\\
\hline L_0&0&0&0&0&0&0&0\\
\hline I_1&0&0&0&0&0&0&0\\
\hline J_1&0&0&0&0&0&0&0\\
\hline E&0&0&0&0&0&0&0
\end{array}$$

For the extension vertices, the way to compute $\tilde{a}^{(i)}_j$ is the following.
Let us choose a $i'$ such that $a^{(i)}_{i'} \neq 0$. For all $n \geq 1$, $\lambda^{(i,j)}_{n+1}=\lambda^{(i',j)}_n$. In particular, if $n> level(i)$:
$$\tilde{a}^{(i)}_j+b_jn=\tilde{a}^{(i')}_j+b_j(n-1).$$
So $\tilde{a}^{(i)}_j=\tilde{a}^{(i')}_j-b_j$. With an induction on the level, this implies that $\tilde{a}^{(i)}_j=0$
for any $i\in J$ if $j\in L_0 \cup I_1 \cup J_1\cup E$.

\begin{theo}\label{23}
 Let $(S)$ a Hopf SDSE such that the truncation at $1$ gives a fundamental extended system. Let us put:
$$Q(h)=\prod_{j\in I_0} (1-\beta_j h_j)^{-\frac{1+\beta_j}{\beta_j}}\prod_{j\in J_0} (1-h_j)^{-1}.$$
Then for all $i\in I$, there exists a formal series $g^{(i)}$, depending only of the $h_j$'s with $j \in I_0 \cup J_0 \cup K_0$,
such that if $q > level(i)$, then $f^{(i,q)}=g^{(i)} Q^q$. In particular:
\begin{enumerate}
\item If $i\in I_0$, $g^{(i)}=1-\beta_i h_i$.
\item If $i\in J_0$, $g^{(i)}=1-h_i$.
\item If $i\in K_0$, $g^{(i)}=1$.
\item If $i\in E$, $g^{(i)}=Q(h)^{-1}$.
\end{enumerate}\end{theo}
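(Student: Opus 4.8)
The plan is to convert the combinatorial recursion of Lemma \ref{7} into a finite system of first-order linear partial differential equations for each generating series $f^{(i,q)}$, and then to guess and verify the closed form $g^{(i)}Q^q$ by reducing everything to one identity satisfied by $Q$ alone. First I would record that, by \cite{Foissy3}, the coefficients $\lambda^{(i,(j,1))}_n=\lambda^{(i,j)}_n$ are affine in $n$ as soon as $n>level(i)$, namely $\lambda^{(i,j)}_n=\tilde a^{(i)}_j+b_j(n-1)$, with the three arrays supplying $b_j$, $a^{(i)}_j=\lambda^{(i,j)}_1$ and $\tilde a^{(i)}_j$. Fixing $i$ and $q>level(i)$, every multi-index $p$ satisfies $|p|+q>level(i)$, so the recursion of Lemma \ref{7} always uses this affine regime; also $a^{(l,1)}_j=a^{(l)}_j$, obtained by reading that recursion at $q=1$, $p=0$. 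Multiplying the recursion by $h^p$ and summing turns it into the system, for each $j\in I$,
$$\partial_{h_j}F=\tilde a^{(i)}_j\,F+b_j(\mathcal{E}+q-1)F-\sum_l a^{(l)}_j\,h_l\partial_{h_l}F,\qquad(\star_j)$$
where $F=f^{(i,q)}$ and $\mathcal{E}=\sum_l h_l\partial_{h_l}$ is the Euler operator. Since the recursion together with $F(0)=1$ determines $F$ uniquely, it will suffice to exhibit a series of the form $g^{(i)}Q^q$ with $g^{(i)}(0)=1$ solving $(\star_j)$ for every $j$.

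The heart of the argument is an identity for $Q$ alone. Writing $L_j=\partial_{h_j}\log Q$ (so $L_j=(1+\beta_j)/(1-\beta_j h_j)$ for $j\in I_0$, $L_j=1/(1-h_j)$ for $j\in J_0$, and $L_j=0$ otherwise), I would prove that
$$L_j=b_j+\sum_l (b_j-a^{(l)}_j)\,h_l L_l\qquad\text{for all }j\in I.\qquad(\heartsuit_j)$$
This I would check block by block over the partition $I=I_0\sqcup J_0\sqcup K_0\sqcup L_0\sqcup I_1\sqcup J_1\sqcup E$: only $l\in I_0\cup J_0$ contribute to the sum, and the array entries for $a^{(l)}_j$ make the surviving terms collapse. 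For $j\in I_0$ only $l=j$ survives, with coefficient $\beta_j$, and $(1+\beta_j)+\beta_j h_j L_j=L_j$; for $j\in J_0$ only $l=j$ survives with coefficient $1$, giving $1+h_jL_j=L_j$; for the remaining blocks both sides vanish.

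Next I would substitute $F=g^{(i)}Q^q$ into $(\star_j)$ and divide by $F$, using $\log F=\log g+q\log Q$. The coefficient of $q$ on the two sides is exactly $(\heartsuit_j)$, so all terms carrying the factor $q$ cancel, and what remains is a $q$-independent system for $g=g^{(i)}$,
$$\partial_{h_j}\log g=(\tilde a^{(i)}_j-b_j)+\sum_l (b_j-a^{(l)}_j)\,h_l\partial_{h_l}\log g.\qquad(\diamond_j)$$
This system is compatible and determines $\log g$ uniquely under $g(0)=1$, which already yields the existence of a single $g^{(i)}$ valid for all $q>level(i)$; reading $(\diamond_j)$ at the indices $j\notin I_0\cup J_0\cup K_0$, where $b_j=0$ and $\tilde a^{(i)}_j=0$ by \cite{Foissy3}, forces $\partial_{h_j}\log g^{(i)}=0$, so $g^{(i)}$ involves only the variables $h_j$ with $j\in I_0\cup J_0\cup K_0$. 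Combined with uniqueness of the solution of the recursion, this gives $f^{(i,q)}=g^{(i)}Q^q$.

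Finally I would read off the four explicit cases by solving $(\diamond_j)$. For $i\in I_0$ the candidate $g^{(i)}=1-\beta_i h_i$ satisfies $(\diamond_j)$ through the same array computation as for $(\heartsuit_j)$, now with the single surviving correction $\tilde a^{(i)}_i-b_i=-\beta_i$; likewise $1-h_i$ for $i\in J_0$ and $1$ for $i\in K_0$. These three can be confirmed independently from Definition \ref{22}, since there $level(i)=0$ and the identity $f^{(i,1)}=g^{(i)}Q$ follows from an elementary rewriting of the $F_\beta$'s (for example $F_{\beta_i}(h_i)(1-\beta_ih_i)^{(1+\beta_i)/\beta_i}=1-\beta_i h_i$). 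For $i\in E$ the candidate $g^{(i)}=Q^{-1}$ reduces $(\diamond_j)$, via $(\heartsuit_j)$, to the requirement $\tilde a^{(i)}_j=0$ for every $j$, which must be drawn from the level-recursion $\tilde a^{(i)}_j=\tilde a^{(i')}_j-b_j$ propagated down to a level-$0$ ancestor. I expect this last point to be the main obstacle: for the higher blocks $L_0$, $I_1$, $J_1$ and especially the extension vertices $E$, one cannot simply quote a closed-form array entry but must track the affine regime of the $\lambda$'s through the whole level filtration, and it is there that the delicate bookkeeping lies; the block-by-block verification of $(\heartsuit_j)$ is routine by comparison, but is where all the arithmetic of the three arrays is consumed.
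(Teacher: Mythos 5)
Your core computations are all correct, and your route is a genuine reorganization of the paper's. The paper never leaves the coefficient recursion of Lemma \ref{7}: using the array identity $a^{(l)}_j=b_j$ for $l\in I_0\cup J_0\cup K_0$, $l\neq j$, the sum $\sum_l a^{(l,1)}_jp_l$ collapses against $b_j(p_1+\cdots+p_N)$, so the recursion decouples variable by variable; the paper then solves it explicitly and obtains a closed product formula (its equation (\ref{E3})) for $f^{(i,q)}$ valid for \emph{every} $i$, from which $Q^q$, $g^{(i)}$ and the four special cases are simply read off. You instead recast the recursion as the differential system $(\star_j)$, isolate the identity $(\heartsuit_j)$ for $\partial_{h_j}\log Q$ (which is exactly the same array collapse, in logarithmic-derivative form), and verify the ansatz $gQ^q$, reducing to the $q$-independent system $(\diamond_j)$. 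Both routes consume the same imported input (the arrays of \cite{Foissy3}, including $\tilde a^{(i)}_j=0$ for all $j$ when $i\in E$, which you rightly single out as the delicate borrowed fact, and which the paper simply asserts). The paper's method buys an explicit formula for $g^{(i)}$ in all seven blocks, which is reused later (e.g.\ in Proposition \ref{25}); yours buys a cleaner separation between ``what $Q$ satisfies'' and ``what $g$ satisfies''.

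Two steps need tightening. First, for $i\in L_0\cup I_1\cup J_1$ the theorem still claims existence of $g^{(i)}$, and there your proof rests on the bare assertion that $(\diamond_j)$ is ``compatible''. That is not automatic: integrability of such a system is a real constraint on the constants (already at order two it requires $(b_k-a^{(j)}_k)(\tilde a^{(i)}_j-b_j)=(b_j-a^{(k)}_j)(\tilde a^{(i)}_k-b_k)$), and for these blocks you exhibit no candidate solution. The fix lies inside your own setup: for any $q_0\in J_i$ with $q_0>level(i)$, the series $f^{(i,q_0)}Q^{-q_0}$ \emph{is} a solution of $(\diamond_j)$ --- run your cancellation computation in reverse, using that $f^{(i,q_0)}$ solves $(\star_j)$ by Lemma \ref{7} and that $(\heartsuit_j)$ holds identically --- so compatibility follows from the existence of $f^{(i,q_0)}$ itself, and your uniqueness argument (coefficient induction from $g(0)=1$) then shows the quotient is independent of $q_0$, which is the theorem. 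Second, your claim that $(\diamond_j)$ at $j\notin I_0\cup J_0\cup K_0$ ``forces $\partial_{h_j}\log g^{(i)}=0$'' is too quick: with $b_j=\tilde a^{(i)}_j=0$ that equation reads $\partial_{h_j}\log g=-\sum_l a^{(l)}_jh_l\partial_{h_l}\log g$, which does not vanish on its own; you also need $a^{(l)}_j=0$ for every $l\in I_0\cup J_0\cup K_0$ (true, from the arrays) and then an induction on the degree, or a comparison with the solution of the subsystem in the low variables extended constantly in the others. The paper gets this point by a different and shorter mechanism: $a^{(i,q)}_j=\tilde a^{(i)}_j+b_j(q-1)=0$, and by the remark following Lemma \ref{7} the vanishing of a coefficient propagates to all larger multi-indices, so $f^{(i,q)}$ does not involve $h_j$ at all.
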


\begin{proof} We apply lemma \ref{7}. As $q>level(i)$, this gives:
$$a^{(i,q)}_{(p_1,\ldots,p_j+1,\ldots,p_N)}=\left(\tilde{a}^{(i)}_j+b_j(p_1+\ldots+p_N+q-1)
-\sum_{l=1}^N a^{(l,1)}_j p_l\right) a^{(i,q)}_{(p_1,\ldots,p_N)}.$$
In particular, for $p_1=\ldots=p_N=0$,  $a^{(i,q)}_j=\tilde{a}^{(i)}_j+b_j(q-1)$. 
If $j \in L_0\cup I_1 \cup J_1 \cup E$, then $\tilde{a}^{(i)}_j=b_j=0$, so $a^{(i,q)}_j=0$ and $f^{(i,q)}$ does not depend
on $h_j$. From now, we assume that $p_k=0$ if $k\in L_0 \cup I_1 \cup J_1 \cup E$.
Let us take $j\in I_0 \cup J_0 \cup K_0$. For all $l\in I_0 \cup J_0 \cup K_0$, $a^{(l)}_j=b_j$, except perhaps if $l=j$. So:
\begin{eqnarray*}
a^{(i,q)}_{(p_1,\ldots,p_j+1,\ldots,p_N)}&=&\left(\tilde{a}^{(i)}_j+b_j(p_1+\ldots+p_N+q-1)
-\sum_{l=1}^N b_j p_l+\left(b_j-a^{(j)}_j\right)p_j\right) a^{(i,q)}_{(p_1,\ldots,p_N)}\\
&=&\left(\tilde{a}^{(i)}_j+b_j(q-1)+\left(b_j-a^{(j)}_j\right)p_j\right) a^{(i,q)}_{(p_1,\ldots,p_N)}.
\end{eqnarray*}
This implies:
\begin{eqnarray}
\nonumber f^{(i,q)}&=&\prod_{j\in I_0\cup J_0 \cup K_0}\left(1-\left(b_j-a^{(j)}_j\right)h_j\right)^
{-\frac{\tilde{a}^{(i)}_j-b_j}{b_j-a^{(j)}_j}-\frac{b_j}{b_j-a^{(j)}_j}q}\\
\label{E3} &=&\underbrace{\prod_{j\in I_0}(1-\beta_j h_j)^{-\frac{\tilde{a}^{(i)}_j-1-\beta_j}{\beta_j}}
\prod_{j\in J_0}(1-h_j)^{-\tilde{a}^{(i)}_j+1} \prod_{j\in K_0}e^{-\tilde{a}^{(i)}_j h_j}}_{g^{(i)}}\\
\nonumber &&\left(\underbrace{\prod_{j\in I_0} (1-\beta_j h_j)^{-\frac{1+\beta_j}{\beta_j}}\prod_{j\in J_0} (1-h_j)^{-1}}
_{Q(h)} \right)^q.
\end{eqnarray}
In particular, if $i,j\in I_0 \cup J_0 \cup K_0$, $\tilde{a}^{(i)}_j=a^{(i)}_j=b_j$, except perhaps if $i=j$. In this case:
$$g^{(i)}=\left((1-\left(b_i-a^{(i)}_i\right)h_i\right)^{-\frac{\tilde{a}^{(i)}_i-b_i}{b_i-a^{(i)}_i}}
=\left\{\begin{array}{l}
(1-\beta_i h_i)\mbox{ if }i\in I_0,\\
(1-h_i) \mbox{ if }i\in J_0,\\
1\mbox{ if }i\in K_0.
\end{array}\right.$$
If $i\in E$, then $\tilde{a}^{(i)}_j=0$ for all $j\in I$, so:
$$g^{(i)}=\prod_{j\in I_0} (1-\beta_j h_j)^{\frac{1+\beta_j}{\beta_j}}\prod_{j\in J_0} (1-h_j).$$
Hence, $g^{(i)}=Q(h)^{-1}$. \end{proof}\\

In the particular case where $L_0=I_1=J_1=E=\emptyset$:

\begin{cor}\label{24}
Let us take $I=I_0 \cup J_0 \cup K_0$, with $I_0 \cup J_0 \neq \emptyset$.
The following SDSE is Hopf: 
\begin{itemize}
\item For all $i\in I_0$:
$$x_i=\sum_{q\in J_i} B_{(i,q)}\left((1-\beta_i x_i) \prod_{j\in I_0} (1-\beta_jx_j)^{-\frac{1+\beta_j}{\beta_j}q}
\prod_{j\in J_0}(1-x_j)^{-q}\right).$$
\item For all $i\in J_0$:
$$x_i=\sum_{q\in J_i} B_{(i,q)}\left((1-x_i) \prod_{j\in I_0} (1-\beta_jx_j)^{-\frac{1+\beta_j}{\beta_j}q}
\prod_{j\in J_0}(1-x_j)^{-q}\right).$$
\item For all $i\in K_0$:
$$x_i=\sum_{q\in J_i} B_{(i,q)}\left( \prod_{j\in I_0} (1-\beta_jx_j)^{-\frac{1+\beta_j}{\beta_j}q}
\prod_{j\in J_0}(1-x_j)^{-q}\right).$$
\end{itemize}\end{cor}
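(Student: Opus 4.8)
The plan is to obtain the explicit shape of the series from Theorem \ref{23} and then to prove the genuinely new assertion, which is that these systems are Hopf. Since we are in the case $L_0=I_1=J_1=E=\emptyset$, every vertex of the system lies in $I_0\cup J_0\cup K_0$ and so has level $0$. The condition $q>\mathrm{level}(i)$ in Theorem \ref{23} thus becomes $q\geq 1$, and the factorization $f^{(i,q)}=g^{(i)}Q^q$ holds for every $q\in J_i$, with $g^{(i)}$ equal to $1-\beta_i h_i$, $1-h_i$ or $1$ according as $i\in I_0$, $i\in J_0$ or $i\in K_0$. Writing out $g^{(i)}Q^q$ in each case reproduces verbatim the three displayed equations, so the only thing left to establish is the Hopf property.

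Next I would check that the truncation at $1$ of the displayed system is exactly a fundamental SDSE in the sense of Definition \ref{22}, for the partition $I=I_0\cup J_0\cup K_0$ with all other parts empty. Using $F_\beta(h)=(1-\beta h)^{-1/\beta}$, $F_{\frac{\beta}{1+\beta}}((1+\beta)h)=(1-\beta h)^{-(1+\beta)/\beta}$ and $F_1(h)=(1-h)^{-1}$, the prefactor $1-\beta_i x_i$ of the $i\in I_0$ equation merges with the $j=i$ factor $(1-\beta_i x_i)^{-(1+\beta_i)/\beta_i}$ of $Q$ to give $(1-\beta_i x_i)^{-1/\beta_i}=F_{\beta_i}(h_i)$, and the $I_0$, $J_0$, $K_0$ cases then match items 1, 2, 3 of Definition \ref{22} on the nose. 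Hence the truncation is fundamental, and therefore Hopf by the classification of \cite{Foissy3}.

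For sufficiency I would invoke the converse of Proposition \ref{6} recorded in the remark after its proof: the system is Hopf as soon as, for every $i,i'\in I$, $q\in J_{i'}$ and $n\geq 1$, there is a single scalar $\lambda_n^{(i,(i',q))}$ with $\lambda_n^{(i,(i',q))}a_t=\sum_{t'}n_{(i',q)}(t,t')a_{t'}$ for all trees $t$ of degree $n$ rooted at a decoration in $\{i\}\times J_i$. As candidates I would take the values $\lambda_n^{(i,(j,1))}=\tilde{a}^{(i)}_j+b_j(n-1)$ (and $a^{(i)}_j$ at $n=1$) furnished by the fundamental truncation through \cite{Foissy3}, extended to arbitrary $q\in J_{i'}$ using that $\lambda_n^{(i,(i',q))}$ is independent of $q$ (Remark 3 after Lemma \ref{7}). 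On the corolla trees $t=B_{(i,q)}(\tun_{(1,1)}^{p_1}\ldots\tun_{(N,1)}^{p_N})$ the required identity is precisely the recursion
$$a^{(i,q)}_{(p_1,\ldots,p_j+1,\ldots,p_N)}=\frac{1}{p_j+1}\Big(\lambda_{p_1+\ldots+p_N+q}^{(i,(j,1))}-\sum_{l=1}^N a^{(l,1)}_j p_l\Big)a^{(i,q)}_{(p_1,\ldots,p_N)}$$
of Lemma \ref{7}, and the multiplicative ansatz $f^{(i,q)}=g^{(i)}Q^q$ is by construction its unique solution, exactly as computed in the proof of Theorem \ref{23}.

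The main obstacle is the passage from these corolla coefficients to the full identity of Proposition \ref{6} for an arbitrary tree $t$. The point that makes it go through is that the factorized form $g^{(i)}Q^q$ forces the coefficients $a_t$ to be multiplicative over the maximal subtrees hanging from the root, so that cutting a leaf decorated $(i',q)$ contributes a factor governed only by the degree $n$ and the data $(i,i',q)$, and not by the internal shape of $t$; proving this proportionality uniformly, and thereby checking that the single candidate $\lambda_n^{(i,(i',q))}$ works for every $t$ at once, is where the homogeneity of $Q$ and the $q$-independence of the structure constants are really used.
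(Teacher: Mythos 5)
Your reductions in the first two paragraphs are fine and match what the paper leaves implicit: when $L_0=I_1=J_1=E=\emptyset$ every vertex has level $0$, so Theorem \ref{23} gives $f^{(i,q)}=g^{(i)}Q^q$ for all $q\in J_i$, and the truncation at $1$ matches items 1--3 of Definition \ref{22} (indeed, the paper offers no separate proof of Corollary \ref{24}; it is presented exactly as this specialization). The genuine gap is in the sufficiency step, which is the entire content of the corollary, since Theorem \ref{23} only gives the \emph{necessary} shape of a Hopf system. The converse of Proposition \ref{6} that you invoke requires exhibiting, for each $i$, $(i',q)$ and $n$, a single scalar $\lambda_n^{(i,(i',q))}$ such that $\lambda_n^{(i,(i',q))}a_t=\sum_{t'}n_{(i',q)}(t,t')a_{t'}$ holds for \emph{every} tree $t$ of degree $n$ whose root is decorated in $\{i\}\times J_i$. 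What you actually verify is this identity only on corollas $B_{(i,q)}\left(\tun_{(1,1)}^{p_1}\ldots \tun_{(N,1)}^{p_N}\right)$ and only for cuts of leaves of degree $1$: that is exactly Lemma \ref{7}, a very special instance of the hypothesis, not the hypothesis itself. Your last paragraph concedes that the passage to arbitrary trees is ``the main obstacle'', and what you offer in its place --- that the factorization $g^{(i)}Q^q$ ``forces'' the ratio $\bigl(\sum_{t'}n_{(i',q)}(t,t')a_{t'}\bigr)/a_t$ to be independent of the shape of $t$ --- is a restatement of the conclusion rather than an argument: multiplicativity of $a_t$ over the branches of the root holds for \emph{any} SDSE by the inductive definition of the coefficients, and by itself says nothing about shape-independence of that ratio. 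So the proof does not close at its crucial point.

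Two further remarks. First, the converse of Proposition \ref{6} is itself only asserted in a remark of the paper, without proof and with the explicit statement that it ``will not be used here''; the same is true of the $q$-independence of $\lambda_n^{(i,(j,q))}$ (Remark 3 after Lemma \ref{7}), which moreover is stated there for systems already known to be Hopf. A complete argument must therefore either prove that converse and then verify its full hypothesis by an induction over all decorated trees (not just corollas), or bypass it entirely by imitating section \ref{s4}: construct a suitable graded pre-Lie algebra playing the role of $\g_{FdB}$, a pre-Lie morphism from $\g_{CK}^J$ onto it computing the coefficients $\nu_t$ of the solution, and identify the image of the dual morphism with the subalgebra generated by the $x_i(n)$'s. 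That dualization step is precisely how the paper proves Hopfness in the single-equation case (Corollary \ref{20}), and it is the work your proposal points at but does not carry out.
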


{\bf Example.} For the example of the introduction:
$$\left\{\begin{array}{rcl}
x_1&=&\displaystyle \sum_{k\geq 1}B_{(1,k)}\left(\frac{(1+x_1)^{1+2k}}{(1-x_2)^k(1-x_3)^{2k}}\right),\\[3mm]
x_2&=&\displaystyle B_2\left(\frac{(1+x_1)^2}{(1-x_3)^2}\right),\\[3mm]
x_3&=&\displaystyle B_3\left(\frac{(1+x_1)^2}{(1-x_2)(1-x_3)}\right).
\end{array}\right.$$
This is obtained from a fundamental system, with $I_0=\{1,3\}$, $J_0=\{2\}$, $\beta_1=-1/3$, $\beta_3=1$,
by a change of variables $h_1\longrightarrow 3h_1$. Hence, it generates a Hopf subalgebra. \\

Corollary \ref{24} determines the formal series $f^{(i,q)}$ when $i \in I_0\cup J_0 \cup K_0$. If $i \in L_0$, 
theorem \ref{23} and (\ref{E3}) determines all the $f^{(i,q)}$.
If $i\in I_1 \cup J_1$, then theorem \ref{23} and (\ref{E3}) determines the $f^{(i,q)}$ if $q\geq 2$,
and $f^{(i,1)}$ is given in definition \ref{22}. It remains to determine $f^{(i,q)}$ when $i\in E$.\\

{\bf Notations.}\begin{enumerate}
\item  Let $(S)$ be a Hopf SDSE, and let $\gs$ be the (oriented) \emph{graph of dependence }of $(S)$, that is to say:
\begin{itemize}
\item The vertices of $\gs$ are the elements of $I$.
\item There is an oriented edge from $i$ to $j$ if, and only if, $a^{(i,1)}_j \neq 0$.
\end{itemize}
\item Let $i,j \in J$. We shall write $i\fleche{}j$ if there is an oriented edge from $i$ to $j$ in $\gs$.
For all $q\geq 1$, we shall write $i\fleche{q} j$ if there is an oriented path of length $q$ from $i$ to $j$ in $\gs$.
In particular, $i\fleche{0} j$ if, and only if, $i=j$.
\end{enumerate}

Theorem \ref{23} and its proof give all the formal series $f^{(i,q)}$ when the level of $i$ is $\leq 1$. 
If $i$ is an extension vertex, its level can be greater than $2$, and the associated formal series are now described:

\begin{prop}\label{25}
Let $(S)$ be a Hopf SDSE such that the truncation at $1$ is an extended fundamental system. For any $i\in E$, of level $n\geq 1$:
\begin{itemize}
\item If $q < n$, $\displaystyle f^{(i,q)}=1+\sum_{j\in I} a^{(i',1)}_j h_j$, where $i'$ is any element of $I$ such that $i\fleche{q-1}i'$.
\item $f^{(i,n)}=f^{(i',1)}$, where $i'$ is any element of $J$ such that $i\fleche{n-1}i'$.
\item If $q>n$, then $f^{(i,q)}=Q^{q-1}$. 
\end{itemize}\end{prop}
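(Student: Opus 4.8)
The plan is to reduce everything to the recursion of Lemma~\ref{7}, writing $\lambda^{(i,j)}_m:=\lambda^{(i,(j,1))}_m$. That recursion expresses each coefficient of $f^{(i,q)}$ through the structure constants $\lambda^{(i,j)}_m$ together with the fixed linear coefficients $a^{(l,1)}_j$; since the passage from total degree $k$ to $k+1$ only invokes $\lambda^{(i,j)}_{k+q}$ and the sum $\sum_l a^{(l,1)}_j p_l$ (which involves neither $i$ nor $q$), the series $f^{(i,q)}$ is entirely determined by the tail $(\lambda^{(i,j)}_m)_{m\geq q}$. Thus the whole statement reduces to controlling these structure constants for an extension vertex.

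The case $q>n$ is immediate from Theorem~\ref{23} and (\ref{E3}): for an extension vertex one has $g^{(i)}=Q^{-1}$, and the range $q>n=level(i)$ is exactly the one treated there, whence $f^{(i,q)}=g^{(i)}Q^q=Q^{q-1}$.

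For $q\leq n$ I would invoke the extension-vertex relation recalled before Theorem~\ref{23}: whenever $i$ is an extension vertex and $i\fleche{}i'$, one has $\lambda^{(i,j)}_{m+1}=\lambda^{(i',j)}_m$ for all $m\geq 1$. Along a path $i\fleche{q-1}i'$ every intermediate vertex still has level $\geq 2$ (the level drops by one at each step and $q\leq n$), hence is again an extension vertex, so the relation may be applied $q-1$ times to give $\lambda^{(i,j)}_m=\lambda^{(i',j)}_{m-(q-1)}$ for every $m\geq q$; equivalently $\lambda^{(i,j)}_{k+q}=\lambda^{(i',j)}_{k+1}$ for all $k\geq 0$. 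Consequently the Lemma~\ref{7} recursion generating $f^{(i,q)}$ coincides step by step with the one generating $f^{(i',1)}$, and an induction on the total degree yields $f^{(i,q)}=f^{(i',1)}$. It then remains only to read off $level(i')=n-(q-1)$: for $q<n$ one has $level(i')\geq 2$, so $i'$ is itself an extension vertex and $f^{(i',1)}=1+\sum_{j\in I}a^{(i',1)}_j h_j$ by Definition~\ref{22}; for $q=n$ one has $level(i')=1$, and $f^{(i',1)}$ is the prescribed degree-$1$ series.

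The delicate point, and the main obstacle, is well-definedness: the outcome must not depend on the chosen $i'$ with $i\fleche{q-1}i'$. I would settle this by noting that $\lambda^{(i,j)}_m$ is intrinsic (Proposition~\ref{6}), so the identity $\lambda^{(i,j)}_m=\lambda^{(i',j)}_{m-(q-1)}$ forces every admissible $i'$ to share the same sequence $(\lambda^{(i',j)}_\bullet)$, and hence, once more through Lemma~\ref{7}, the same series $f^{(i',1)}$; this is consistent with the compatibility already built into Definition~\ref{22}, namely that all $j,k$ with $a^{(i)}_j,a^{(i)}_k\neq 0$ satisfy $f_j=f_k$. A routine secondary check is that the index $m-(q-1)$ never drops below $1$ during the iteration (guaranteed by $m\geq q$), so that each use of the relation is legitimate.
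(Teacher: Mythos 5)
Your proof is correct, and for the cases $q\leq n$ it follows a genuinely different route from the paper's. The paper does not use the shift relation $\lambda^{(i,j)}_{m+1}=\lambda^{(i',j)}_m$ as a black box: inside the proof it re-derives what it needs by applying Proposition \ref{6} to the ladders $B_{(i_0,1)}\circ\cdots\circ B_{(i_k,1)}(1)$ (the linearity of the series attached to extension vertices forces a single tree $t'$ to contribute), obtaining $\lambda^{(i,(j,1))}_l=a^{(i',1)}_j$ for $1\leq l\leq n$ and $i\fleche{l-1}i'$. It then splits the argument: for $q<n$ it shows the linear coefficients of $f^{(i,q)}$ are the $a^{(i',1)}_j$ and the quadratic ones vanish by level arithmetic ($a^{(j,1)}_{j'}=0$ and $\lambda^{(i,(j',1))}_{q+1}=0$ because the levels cannot match), killing all higher coefficients by the propagation-of-zeros remark after Lemma \ref{7}; only for $q=n$ does it match the Lemma \ref{7} recursion of $f^{(i,n)}$ against that of $f^{(i',1)}$, using $\tilde{a}^{(i)}_j=\tilde{a}^{(i')}_j-(n-1)b_j$. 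You instead treat all $q\leq n$ uniformly: iterate the recalled extension-vertex relation along the path (valid, since the sources $i_0,\ldots,i_{q-2}$ have level at least $2$, hence lie in $E$), deduce $\lambda^{(i,(j,1))}_{k+q}=\lambda^{(i',(j,1))}_{k+1}$ for all $k\geq 0$, and identify the two Lemma \ref{7} recursions starting from the common value $1$; Definition \ref{22} then gives the explicit form of $f^{(i',1)}$ according to the level of $i'$. Your version is shorter and unified --- the paper's closing remark ($f^{(i,q)}=f^{(i',1)}$ also when $q<n$) acknowledges exactly this viewpoint --- but it is less self-contained: everything hinges on the relation $\lambda^{(i,j)}_{m+1}=\lambda^{(i',j)}_m$ holding for all $m\geq 1$, which the paper only states as material recalled from \cite{Foissy3} (as the device for computing the $\tilde{a}^{(i)}_j$) and never proves, whereas its ladder computation re-establishes precisely the portion it needs. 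Your well-definedness argument and the check that the shifted indices stay $\geq 1$ are sound, and the case $q>n$ is handled identically in both proofs.
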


\begin{proof} If $q>n$, this comes directly from theorem \ref{23}. Let us prove the case $q<n$ by induction on $n$.
If $n=1$, there is nothing to prove. Let us assume the results at all rank $k<n$. Let $i_0=i \rightarrow i_1 \rightarrow \cdots \rightarrow i_n$
in the graph of dependence $\gs$. As the level of $i_0$ is $n$, the level of $i_k$ is $n-k$ for all $0 \leq k \leq n$.
In particular, $i_1,\ldots,i_{n-2}$ are extension vertices, as their level is $\geq 2$. We apply proposition \ref{6}  with 
$t=B_{(i_0,1)}\circ \ldots \circ B_{(i_k,1)}(1)$, with $k\leq n-1$. As $i_0,\ldots,i_{k-1}$ are extension vertices,
the only tree $t'$ such that $a_{t'}\neq 0$ and $n_{(j,1)}(t,t')\neq 0$ is $B_{(i_0,1)}\circ \ldots \circ B_{(i_k,1)}\circ B_{(j,1)}(1)$.
So:
$$\lambda_{k+1}^{(i,(j,1))}a^{(i_0,1)}_{i_1}\ldots a^{(i_{k-1},1)}_{i_k}
=a^{(i_0,1)}_{i_1}\ldots a^{(i_{k-1},1)}_{i_k}a^{(i_k,1)}_j.$$
Hence, if $1\leq l \leq n$, $\lambda^{(i,(j,1))}_l=a^{(i',1)}_j$, where $i'$ is any element of $I$ such that $i\fleche{l-1}i'$. 
As a consequence, if $\lambda^{(i,(j,1))}_l \neq 0$, $i\fleche{l} j$, so the level of $j$ is $n-l$.\\

Let us fix $1\leq q\leq n$. From lemma \ref{7}, with $p_1=\ldots=p_N=0$, for all $j\in J$, $a^{(i,q)}_j=\lambda_q^{(i,(j,1))}$.
So this is equal to $a^{(i')}_j$ for any $i'$ such that $i\fleche{q-1}i'$. Hence, if $a^{(i,q)}_j \neq 0$, then  $i\fleche{q} j$.
So the level of $j$ is $n-q$.

If $q<n$, let us now consider $j,j'\in J$. If $a^{(i,q)}_j=0$ or $a^{(i,q)}_{j'}=0$, by lemma \ref{7}, $a^{(i,q)}=0$.
If $a^{(i,q)}_j\neq 0$ and $a^{(i,q)}_{j'}\neq 0$, then 
$a^{(i,q)}_{j,j'}=\left(\lambda^{(i,(j',1))}_{q+1}-a^{(j,1)}_{j'}\right) a^{(i,q)}_{j'}$.
As the level of $j$ and $j'$ is $n-q\geq 1$, $a^{(j,1)}_{j'}=0$. Moreover, as the level of $j'$ is $n-q\neq n-q-1$,
$\lambda^{(i,(j',1))}_{q+1}=0$. So $a^{(i,q)}_{j,j'}=0$. So all the terms of degree $2$ of $f^{(i,q)}$ are zero,
so all the terms of degree $\geq 2$ of $f^{(i,q)}$ are zero. \\

Let us finish with the case $q=n$. Let us choose $i'$ such that $i \fleche{n-1} i'$; then the level of $i'$ is $1$.
We already saw that $a^{(i,n)}_j=a^{(i',1)}_j$ for all $j\in J$.
If $i=i_0\rightarrow \cdots \rightarrow i_{n-1}=i'$ in $\gs$, then $i_0,\ldots,i_{n-2}$ are extension vertices as their level is $\geq 2$, so for all $j\in J$:
$$\tilde{a}^{(i)}_j=\tilde{a}^{(i_1)}_j-\beta_j=\ldots=\tilde{a}^{(i')}_j-(n-1)\beta_j.$$
 Let us apply lemma \ref{7}, with $(p_1,\ldots,p_N)\neq (0,\ldots,0)$. Then $p_1+\ldots+p_n+n>n$, so:
\begin{eqnarray*}
&&a^{(i,n)}_{(p_1,\ldots,p_j+1,\ldots,p_N)}\\
&=&\frac{1}{p_j+1}\left(\tilde{a}^{(i')}_j-(n-1)b_j+
b_j(p_1+\ldots+p_N+n-1)-\sum_{l=1}^N a^{(l,1)}_j p_l\right)a^{(i,n)}_{(p_1,\ldots,p_N)}\\
&=&\frac{1}{p_j+1}\left(\tilde{a}^{(i')}_j+b_j(p_1+\ldots+p_N)
-\sum_{l=1}^N a^{(l,1)}_j p_l\right)a^{(i,n)}_{(p_1,\ldots,p_N)}\\
&=&\frac{1}{p_j+1}\left(\lambda^{(i',(j,1))}_{p_1+\ldots+p_N+1}
-\sum_{l=1}^N a^{(l,1)}_j p_l\right)a^{(i,n)}_{(p_1,\ldots,p_N)}.
\end{eqnarray*}
By lemma \ref{7}, this is the same induction as the coefficients $a^{(i',1)}_{(p_1,\ldots,p_N)}$. So $f^{(i,n)}=f^{(i',1)}$.  \end{proof}\\

{\bf Remark.} If $q<n$, the level of $i'$ is $n-q+1\geq 2$, so $i'$ is an extension vertex, and $f^{(i,q)}=f^{(i',1)}$.

\subsection{Quasi-cyclic systems}

Let us first recall the structure of a quasi-cyclic SDSE:

\begin{defi} 
Let $N \geq 2$. A SDSE is \emph{$N$-quasi-cyclic} if it has the following form:
$J$ admits a partition $J=J_{\overline{1}}\cup\cdots \cup J_{\overline{N}}$ indexed by $\mathbb{Z}/N\mathbb{Z}$, with the following conditions:
\begin{enumerate}
\item If $i \in J_{\overline{p}}$, its direct descendants are all in $J_{\overline{p+1}}$.
\item If $i$ and $j$ have a common direct ascendant, then they have the same direct descendants.
\end{enumerate}
Moreover, for all $i\in J$:
$$f_i=1+\sum_{i\longrightarrow j} a^{(i)}_j h_j,$$
and if $i$ and $j$ have a common direct ascendant, then $f_i=f_j$. 
\end{defi}

{\bf Remark.} Let us consider a quasi-cyclic SDSE.
For all $i \in J$:
$$x_i=\tdun{$i$}+\sum_{n=1}^\infty \sum_{i_1,\ldots,i_n \in I} a^{(i)}_{i_1}a^{(i_1)}_{i_2}\ldots a^{(i_{n-1})}_{i_n}
B_i\circ B_{i_1}\circ \ldots \circ B_{i_n}(1).$$

In order to simplify the problem, we shall assume that for all $i,j \in J$ such that $i\rightarrow j$ in $\hs$, then $a^{(i)}_j$ depends only on $i$.
This is generally not the case, but it can be assumed without loss of generality if there is no vertex with no ascendant in the graph of dependence of the system.
In this case, if $i=i_0 \rightarrow \ldots \rightarrow i_n$ is a path of length $n$ in $\gs$,  
$a^{(i_0)}_{i_1}\ldots a^{(i_{n-1})}_{i_n}$ only depends on $i$ and $n$: we denote it by $b^{(i)}_n$. Then for all $i\in I$:
$$x_i=\sum_{n=0}^{\infty} \sum_{i\rightarrow i_1 \rightarrow \cdots \rightarrow i_n}
b^{(i)}_n B_i \circ B_{i_1}\circ \ldots \circ B_{i_n}(1),$$
with the convention $b^{(i)}_0=1$.  Moreover, if there is a path of length $m$ from $i$ to $j$, then 
$b^{(i)}_m b^{(j)}_n=b^{(i)}_{m+n}$ for all $n$.
 With these notations, $\lambda_n^{(i,j)}=0$ if there is no path of length $n$ from $i$ to $j$ and is equal to $b^{(i)}_n/b^{(i)}_{n-1}$
if there is such a path. 

\begin{theo}
Let $(S)$ be a Hopf SDSE such that the truncation at $1$ gives a quasi-cyclic SDSE satisfying the preceding hypothesis.
Then, for all $i\in J$, for all $q \in I_i$:
$$f^{(i,q)}=1+b_q^{(i)}\sum_{i\fleche{q} j}h_j.$$
\end{theo}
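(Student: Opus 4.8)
The plan is to feed the full system $(S)$ into lemma \ref{7} and read off the coefficients $a^{(i,q)}_{(p_1,\ldots,p_N)}$ of $f^{(i,q)}$ degree by degree. Since the truncation at $1$ is quasi-cyclic, its degree-$1$ series $f^{(i,1)}=1+\sum_{i\to j}a^{(i)}_j h_j$ are affine, so the data $a^{(i,1)}_j$ and $a^{(i,1)}_{j,k}$ entering lemma \ref{7} are exactly those of the quasi-cyclic system; by the computation of \cite{Foissy3} recalled just above, the scalars $\lambda_n^{(i,(j,1))}$ vanish unless $i\fleche{n}j$ and otherwise equal $b_n^{(i)}=b^{(i)}_n/b^{(i)}_{n-1}$, which depends only on $i$ and $n$. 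First I would extract the linear part: applying lemma \ref{7} with $p_1=\cdots=p_N=0$ and incrementing the $j$-th coordinate gives $a^{(i,q)}_j=\lambda_q^{(i,(j,1))}$, which is $b_q^{(i)}$ when $i\fleche{q}j$ and $0$ otherwise. This already produces the announced affine part $1+b_q^{(i)}\sum_{i\fleche{q}j}h_j$.

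It then remains to show that $f^{(i,q)}$ has no term of degree $\geq 2$. By the monotonicity remark following lemma \ref{7} (if a coefficient vanishes, so does every coefficient dominating it componentwise), it suffices to kill the degree-$2$ coefficients. Incrementing the $j$-th coordinate from the state $p=e_{j'}$, lemma \ref{7} gives $a^{(i,q)}_{e_j+e_{j'}}=\frac{1}{1+\delta_{j,j'}}\bigl(\lambda_{q+1}^{(i,(j,1))}-a^{(j',1)}_j\bigr)a^{(i,q)}_{j'}$, so I only need to check that the prefactor $\lambda_{q+1}^{(i,(j,1))}-a^{(j',1)}_j$ vanishes whenever $a^{(i,q)}_{j'}\neq 0$, i.e. whenever $i\fleche{q}j'$.

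The key structural input is condition 2 of quasi-cyclicity: vertices with a common direct ascendant have the same direct descendants. An induction on $m$ then shows that all vertices reachable from $i$ in exactly $m\geq 1$ steps share one and the same set of direct descendants. With this in hand I would split into two cases. If $j'\to j$, then $i\fleche{q+1}j$ and the cocycle relation $b^{(i)}_q b^{(j')}_1=b^{(i)}_{q+1}$ yields $\lambda_{q+1}^{(i,(j,1))}=b^{(i)}_{q+1}/b^{(i)}_q=b^{(j')}_1=a^{(j',1)}_j$; if $j'\not\to j$, then $a^{(j',1)}_j=0$ and, since every $q$-step descendant of $i$ has the same direct descendants as $j'$, we get $i\not\fleche{q+1}j$ and hence $\lambda_{q+1}^{(i,(j,1))}=0$. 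In both cases the prefactor vanishes, so all degree-$2$ coefficients are zero and, by monotonicity, so are all higher ones; thus $f^{(i,q)}=1+b_q^{(i)}\sum_{i\fleche{q}j}h_j$.

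I expect the main obstacle to be this structural reachability lemma together with the bookkeeping that makes $\lambda_{q+1}^{(i,(j,1))}$ reduce to the single edge weight $a^{(j',1)}_j$ uniformly in $j$ — that collapse is exactly what forces the higher-order terms to cancel, and it is where the quasi-cyclic hypotheses, together with the simplifying assumption that $a^{(i)}_j$ depends only on $i$ (hence the cocycle identity for the $b^{(i)}_n$), are used in an essential way.
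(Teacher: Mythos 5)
Your proof is correct, and its skeleton coincides with the paper's: lemma \ref{7} with $p_1=\cdots=p_N=0$ to get $a^{(i,q)}_j=\lambda^{(i,(j,1))}_q$ (hence the affine part), then vanishing of all degree-$2$ coefficients, then the monotonicity remark after lemma \ref{7} to kill everything of degree $\geq 2$. Where you genuinely diverge is in how the prefactor $\lambda^{(i,(j,1))}_{q+1}-a^{(j',1)}_j$ is killed. The paper first reduces, via monotonicity, to the case where \emph{both} linear coefficients $a^{(i,q)}_j$ and $a^{(i,q)}_{j'}$ are nonzero, and then invokes the cyclic partition (condition 1 of quasi-cyclicity): taking $i\in I_{\overline{0}}$, both $j$ and $j'$ lie in $I_{\overline{q}}$, so there is no path of length $q+1$ from $i$ to $j$ and no edge $j'\to j$, i.e.\ the two terms of the prefactor vanish \emph{separately}. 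You never use the partition: you split on whether $j'\to j$, and in that case the two terms \emph{cancel} each other (via the cocycle identity $b^{(i)}_q b^{(j')}_1=b^{(i)}_{q+1}$ and the standing hypothesis that edge weights depend only on the source), while if $j'\not\to j$ both terms vanish thanks to your reachability lemma --- all vertices reachable from $i$ in exactly $q$ steps share the same set of direct descendants --- whose induction, resting only on condition 2, is sound. Your route is slightly longer but slightly more general: it uses only condition 2 plus the simplifying hypothesis, never the $\mathbb{Z}/N\mathbb{Z}$ grading, and it treats the configuration $j'\to j$ head-on by cancellation, whereas in a genuinely quasi-cyclic system that configuration forces $a^{(i,q)}_j=0$ and the paper dispatches it through monotonicity without ever meeting a true cancellation.

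One shared blemish, inherited from the source rather than introduced by you: you write $b^{(i)}_n=b^{(i)}_n/b^{(i)}_{n-1}$ when quoting the value of $\lambda^{(i,(j,1))}_n$, identifying two quantities that differ unless $b^{(i)}_{n-1}=1$; the paper's own proof commits the same slip (its preliminary recollection states $\lambda^{(i,j)}_n=b^{(i)}_n/b^{(i)}_{n-1}$, while its proof and the statement of the theorem use $b^{(i)}_q$). This affects only the constant in front of $\sum_{i\fleche{q}j}h_j$, not the structure of either argument.
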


\begin{proof} Up to a change of indexation, we shall assume that $i\in I_{\overline{0}}$. From lemma \ref{7},
for all $j\in J$, $a^{(i,q)}_j=\lambda^{(i,(j,1))}_q$. So this is $0$ if there is no path from $i$ to $j$ of length $q$ and equal to $b^{(i)}_q$
if $i\fleche{q} j$.
Let $j,k \in J$. If $a^{(i,q)}_j=0$ or $a^{(i,q)}_k=0$, then $a^{(i,q)}_{j,k}=0$.
Let us assume that $a^{(i,q)}_j, a^{(i,q)}_k \neq 0$. Then $i\fleche{q}j,k$, so $j,k \in I_{\overline{q}}$. From lemma \ref{7}:
$$a^{(i,q)}_{j,k} =\left(\lambda_{q+1}^{(i,(j,1))}-a^{(k)}_j \right)a^{(i,q)}_k.$$
As $j\in I_{\overline{q}}$, $j\notin I_{\overline{q+1}}$ so there is no path of length $q+1$ from $i$ to $j$, and $\lambda_{q+1}^{(i,(j,1))}=0$.
As $j,k$ are both in $I_{\overline{q}}$, $a^{(k)}_j=0$. As a consequence, $a^{(i,q)}_{j,k}=0$.
All the terms of degree $2$ of $f^{(i,q)}$ are equal to $0$, so all the terms of degree $\geq 2$ of $f^{(i,q)}$ are equal to $0$. \end{proof}\\

{\bf Remark.} If this holds, for all $i$, $n \in \mathbb{N}^*$, $x_i(n)/b^{(i)}_n$ is a sum of the ladders 
$B_{(i_1,p_1)}\circ \ldots \circ B_{(i_k,p_k)}(1)$ with the following conditions:
\begin{itemize}
\item $i_1=1$.
\item $p_1+\ldots+p_k=n$.
\item for all $1\leq r\leq k-1$, there exists a path of length $p_r$ from $i_r$ to $i_{r+1}$ in $\gs$.
\end{itemize}

{\bf Example.} Here is an example of such a SDSE. For all $\overline{i}\in \mathbb{Z}/M\mathbb{Z}$,
let us choose $J_{\overline{i}} \subseteq \mathbb{N}^*$. Then:
$$x_{\overline{i}}=\sum_{j\in J_{\overline{i}}} B_j(1+x_{\overline{i+j}}).$$

\bibliographystyle{amsplain}
\bibliography{biblio}

\end{document}